\newtheorem{theorem}{Theorem}[section]
\newtheorem{corollary}[theorem]{Corollary}
\newtheorem{definition}[theorem]{Definition}
\newtheorem{lemma}[theorem]{Lemma}
\newtheorem{proposition}[theorem]{Proposition}
\def\uh{\upharpoonright}
\newcommand{\CK}{\omega_1^{\mathrm{CK}}}
\newcommand{\KO}{\mathcal{O}}
\newcommand{\QI}{\Pi^1_1}
\newcommand{\DI}{\Delta^1_1}
\begin{document}
\title{Higher Kurtz randomness}
\author{Bj{\o}rn Kjos-Hanssen}
\address{Department of Mathematics\\
University of Hawaii at Manoa\\ 2565 McCarthy Mall \\ Honolulu, HI
96822 \\ USA} \email{bjoern@math.hawaii.edu}

\author{Andr\'e Nies}
\address{Department of Computer Science\\
  University of Auckland\\
  Private Bag 92019\\ Auckland \\ New Zealand } \email{andrenies@gmail.com}

\author{Frank Stephan}
\address{Departments of Mathematics and Computer Science\\
National University of Singapore, 2 Science Drive 2, Singapore 117543\\
Republic of Singapore.}
\email{fstephan@comp.nus.edu.sg}

\author{Liang Yu}
\address{Institute of Mathematical Science\\
Nanjing University, Nanjing, JiangSu Province, 210093\\
P.R. of China} \email{yuliang.nju@gmail.com}
\subjclass[2010]{Primary 03D32, Secondary 03D30, 03E15, 03E35, 68Q30}

\begin{abstract}
A real $x$ is $\DI$-Kurtz random  ($\QI$-Kurtz random) if it is in no closed null $\DI$ set ($\QI$ set).  We show that there is a cone of $\QI$-Kurtz random hyperdegrees.  We characterize lowness for $\DI$-Kurtz randomness as being $\DI$-dominated and $\DI$-semi-traceable.
\end{abstract}

\thanks{Kjos-Hanssen's research was partially supported by NSF (U.S.A.) grants DMS-0652669 and DMS-0901020. Nies is partially supported by the Marsden Fund of New Zealand, grant No.\ 08-UOA-184. Stephan is supported in part by NUS grants number R146-000-114-112 and R252-000-308-112. Yu is supported by NSF of China No.\ 10701041 and Research Fund for the Doctoral Program of Higher Education, No.\ 20070284043.}

\maketitle

\section{Introduction}

\noindent
Traditionally one uses tools from recursion theory to obtain mathematical notions corresponding to our intuitive idea of  randomness
 for reals. However, already Martin-L{\"o}f \cite{ML70} suggested to
use tools from higher recursion (or equivalently, effective
descriptive set theory) when he introduced the notion of
$\DI$-randomness. This approach was pursued to greater depths  by
Hjorth and Nies \cite{HjNi} and Chong, Nies and Yu \cite{CNY07}.
 Hjorth and Nies investigated a higher analog of the usual
Martin-L{\"o}f randomness, and a new notion with no direct analog in (lower)
recursion theory: a real is $\QI$-random if it avoids each null $\QI$
set. Chong, Nies and Yu \cite{CNY07} studied $\DI$-randomness in more
detail, viewing it as a higher analog of both Schnorr and recursive
randomness. By  now a classical result is the characterization of
lowness for Schnorr randomness by recursive traceability (see, for
instance, Nies' textbook \cite{Nies:book}). Chong, Nies and Yu \cite{CNY07}  proved a
higher analog of this result,  characterizing lowness for $\DI$
randomness by $\DI$ traceability.

  Our goal is to carry out similar investigations for higher analogs
of Kurtz randomness \cite{Downey.Griffiths.ea:04}. A real $x$ is Kurtz
random if avoids each $\Pi^0_1$ null class. This is quite a weak notion of randomness: each weakly 1-generic set is Kurtz random, so for instance the law of large numbers can fail badly.

It is essential for Kurtz randomness that the tests are {\it closed} null sets.     For higher analogs of Kurtz randomness one can require that these tests are closed and belong to a more permissive class such as $\DI$,  $\QI$, or $\Sigma^1_1$.

   Restrictions on the computational complexity of a real have been used successfully to analyze randomness notions. For instance, a Martin-L{\"o}f random real  is weakly 2-random iff it forms a minimal pair  with $\emptyset'$ (see \cite{Nies:book}). We prove a result of that kind in the present setting. Chong, Nies, and Yu \cite{CNY07} studied a property restricting the complexity of a real: being  $\DI$-dominated. This is the higher analog of being recursively dominated (or of hyperimmune-free degree).  We show that
    a $\DI$-Kurtz random $\DI$ dominated set is already $\QI$-random.
Thus $\DI$-Kurtz randomness is equivalent to a proper randomness notion on a conull set. We also study the distribution of higher Kurtz random reals in the hyperdegrees.  For instance,  there is a cone of $\QI$-Kurtz random   hyperdegrees. However, its base is very complex, having the largest hyperdegree among all $\Sigma^1_2$ reals.

    Thereafter we turn to lowness for higher Kurtz randomness. Recursive traceability of a real  $x$  is easily seen  to be equivalent to the condition that for each function  $f \le_T x$ there is a recursive function $\hat f$ that agrees with $f$ on at least one
input in each interval of the form $[2^n, 2^{n+1}-1)$ (see \cite[8.2.21]{Nies:book}). Following Kjos-Hanssen, Merkle, and Stephan \cite{KMSarXiv} one says that $x$ is recursively semi-traceable (or infinitely often traceable) if for each $f \le_T x$ there is a recursive function $\hat f$ that agrees with $f$ on infinitely many inputs. It is straightforward to define the higher analog of this
notion, $\DI$-semi-traceability. Our main result is that lowness for $\DI$-Kurtz randomness is equivalent to being $\DI$-dominated and $\DI$-semi-traceable. We also show using forcing that being $\DI$-dominated and $\DI$-semi-traceable is strictly weaker than being $\DI$-traceable. Thus, lowness for $\DI$ Kurtz randomness is strictly
weaker than lowness for $\DI$-randomness.

   \bigskip

\section{Preliminaries}

\noindent
We assume that the reader is familiar with elements of higher
recursion theory, as  presented, for instance,  in Sacks
\cite{Sacks90}. See \cite[Ch. 9]{Nies:book} for a summary.

 A real is an element in $2^{\omega}$. Sometimes we
write $n\in x$ to mean $x(n)=1$. Fix a standard $\Pi^0_2$ set
$H\subseteq \omega\times 2^{\omega} \times 2^{\omega}$ so that for
all $x$ and $n \in \KO$, there is a unique real $y$ satisfying
$H(n,x,y)$. Moreover, if $\omega_1^x=\CK$, then each real $z\leq_h
x$ is Turing reducible to some $y$ so that $H(n,x,y)$ holds for some
$n \in \KO$.  Roughly speaking, $y$ is the $|n|$-th Turing jump of
$x$. These $y$'s are called $H^x$ sets and denoted by $H^x_n$. For
each $n\in \KO$, let $\KO_n=\{m\in \KO\mid |m|<|n|\}$. $\KO_n$ is a
$\Delta^1_1$ set.

We use the Cantor pairing function, the bijection
$p:\omega^2\to\omega$ given by $p(n,s)=\frac{(n+s)^2+3n+s}{2}$, and
write $\langle n,s\rangle=p(n,s)$. For a finite string $\sigma$,
$[\sigma]=\{x \succ \sigma\mid x \in 2^{\omega}\}$.
For an open set $U$, there is a presentation $\hat{U}\subseteq 2^{<\omega}$ so that $\sigma
\in \hat{U}$ if and only if $[\sigma]\subseteq U$. We sometimes
identify $U$ with $\hat{U}$.
For a recursive functional $\Phi$, we use $\Phi^{\sigma}[s]$ to denote
the computation state of $\Phi^{\sigma}$ at stage $s$. For a tree $T$,
we use $[T]$ to denote the set of infinite paths in $T$. Some times we
identify a finite string $\sigma\in \omega^{<\omega}$ with a natural
number without confusion.

The following results will be used in later sections.
\bigskip

\begin{theorem}[Gandy]\label{theorem lowbase} If $A\subseteq 2^\omega$
is a nonempty $\Sigma^1_1$
set, then there is a real $x\in A$ so that $\mathcal{O}^x \leq_h
\mathcal{O}$.
\end{theorem}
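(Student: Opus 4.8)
The plan is to reduce the statement to a fact about ordinals and then derive a contradiction from $\TI$-boundedness. It is standard that $\KO^x \le_h \KO$ if and only if $\omega_1^x = \CK$, so it suffices to produce a member $x \in A$ with $\omega_1^x = \CK$. Write $C = \{x : \omega_1^x = \CK\}$, a $\QI$ set, and aim to show $A \cap C \neq \emptyset$. Using the normal form for $\TI$ sets, I would fix a recursive tree $T$ on $2 \times \omega$ with $A = p[T]$, so that membership of $x$ in $A$ is witnessed by an infinite branch of the section $T_x$.

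Assume for contradiction that $A \cap C = \emptyset$. Then every $x \in A$ has $\omega_1^x > \CK$, so $\CK$ is an $x$-recursive ordinal and there is a well-order recursive in $x$ of order type exactly $\CK$. The heart of the argument is to extract such well-orders \emph{uniformly} and in a lightface $\TI$ manner: I would apply $\TI$-selection to choose, uniformly in $x \in A$, an index $e(x)$ for which $\Phi_{e(x)}^x$ codes a well-order reaching $\CK$, and then form $B = \{w : (\exists x)(x \in A \wedge w = \Phi_{e(x)}^x)\}$. Projecting out the real quantifier $\exists x \in A$ keeps $B$ lightface $\TI$, while externally every element of $B$ is a genuine well-order with $\sup_{w \in B}|w| = \CK$. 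This contradicts the $\TI$-boundedness theorem, which forces the order types occurring in a $\TI$ set of well-orders to be bounded strictly below $\CK$. The contradiction yields $A \cap C \neq \emptyset$, and the equivalence above then gives $x \in A$ with $\KO^x \le_h \KO$.

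The main obstacle is exactly this uniform $\TI$ selection: since well-foundedness is $\QI$, I cannot build ``$\Phi_e^x$ is a well-order'' directly into a $\TI$ definition, so the selected codes must be guaranteed to be well-orders by the construction rather than merely asserted. I expect to handle this by a stage-comparison (pseudohierarchy) argument together with $\TI$-uniformization over a number variable, exploiting that for $x \in A$ the hyperjump $\KO^x$ genuinely outruns $\KO$ and so furnishes, uniformly and $\TI$-definably, initial segments of an $x$-recursive copy of $\CK$. An alternative route, with the same boundedness phenomenon at its core, is Gandy forcing with nonempty $\TI$ conditions below $A$: a sufficiently generic branch lies in $A$, the construction can be run $\le_h \KO$, and genericity against $\TI$ dense sets prevents any $x$-recursive well-order from exceeding $\CK$, again by $\TI$-boundedness.
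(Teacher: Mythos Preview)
The paper does not prove this theorem; it is quoted in the preliminaries as a classical result of Gandy, so there is no proof in the paper to compare against. Your two suggested routes (a $\TI$-boundedness argument and Gandy forcing with nonempty $\TI$ conditions) are indeed the standard ones.

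However, your very first reduction contains a genuine error. The equivalence ``$\KO^x \le_h \KO$ if and only if $\omega_1^x = \CK$'' is false. Only the forward implication holds. For the converse, note that $x \le_T \KO^x$ always (via a recursive function: for each $n$ there is an index $g(n)$ such that $\Phi_{g(n)}^y$ is a well-order iff $n\in y$). Hence $\KO^x \le_h \KO$ forces $x \le_h \KO$. But almost every real $x$ satisfies $\omega_1^x=\CK$ while only countably many satisfy $x\le_h\KO$; any $\QI$-random real, for instance, has $\omega_1^x=\CK$ yet $\KO^x \not\le_h \KO$. The correct equivalence is
\[
\KO^x \le_h \KO \quad\Longleftrightarrow\quad \omega_1^x=\CK \ \text{and}\ x\le_h\KO.
\]
So even if your boundedness argument succeeds in showing $A\cap\{x:\omega_1^x=\CK\}\neq\emptyset$, you have not yet proved the theorem.

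The gap is repairable in two ways. First, your alternative Gandy-forcing route already delivers the full conclusion: the generic $x$ is built $\le_h\KO$, and since the forcing relation for $\Sigma^1_1$ sentences is $\le_h\KO$, every $\QI(x)$ question is decided below $\KO$, giving $\KO^x\le_h\KO$ directly. Second, if you insist on the boundedness route, observe that $A'=A\cap\{x:\omega_1^x=\CK\}$ is itself a nonempty $\TI$ set (the second conjunct is $\TI$ by Theorem~\ref{theorem x h>o iff x computes longer}); apply the Kleene basis theorem to $A'$ to obtain $x\in A'$ with $x\le_T\KO$, and then the computation $\KO^x\le_h x\oplus\KO\le_h\KO$ (valid whenever $\omega_1^x=\CK$, since every $x$-recursive well-order is then isomorphic to a recursive one) finishes the job.
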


\begin{theorem}[Spector \cite{Spector59} and Gandy
\cite{Gandy60}]\label{theorem spector and gandy} $A\subset 2^\omega$
is $\Pi^1_1$ if and only if there is an
arithmetical predicate $P(x,y)$ such that
$$y \in A \leftrightarrow
\exists x\leq_h yP(x,y).
$$
\end{theorem}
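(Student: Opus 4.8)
The plan is to prove the two directions separately, treating the passage from the quantifier form to $\QI$ as routine and the converse as the substantive half.

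For the direction asserting that $\exists x\leq_h y\,P(x,y)$ defines a $\QI$ set, I would first unfold the relation $x\leq_h y$ through the hierarchy of $H$-sets. Since $x\leq_h y$ holds iff $x\leqT H^y_e$ for some $e\in\KO^y$, I can rewrite $y\in A$ as $\exists e\,\exists d\,[\,e\in\KO^y\wedge \{d\}^{H^y_e}\text{ total}\wedge P(\{d\}^{H^y_e},y)\,]$, where $e,d$ range over $\omega$. Using that the fixed $\Pi^0_2$ predicate $H(e,y,z)$ pins down $z=H^y_e$ once $e\in\KO^y$, everything inside the brackets after the clause $e\in\KO^y$ is arithmetic in $(e,d,y)$ (totality is $\Pi^0_2$, and $P$ is arithmetic, evaluated on the pinned $z$). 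Thus $A$ has the shape $\exists e\,[\,e\in\KO^y\wedge(\text{arithmetic})\,]$. Since $e\in\KO^y$ is $\QI$ in $(e,y)$ and $\QI$ is closed under conjunction with arithmetic predicates and under number quantification, $A$ is $\QI$. The only point needing a word is closure of $\QI$ under $\exists^0$, which I would verify by writing the complement as $\forall e\,\exists f\,(f\in[T^y_e])$ for suitable recursive-in-$(e,y)$ trees and Skolemizing the function quantifier, putting it in manifestly $\TI$ form.

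For the converse, let $A$ be $\QI$. The plan is to put $A$ in tree normal form and to let the witness $x$ be a \emph{well-foundedness certificate} rather than a path, since paths through $y$-recursive trees on $\omega$ need not be $\leq_h y$. Concretely, I would fix a recursive-in-$y$ tree $T^y\subseteq\omega^{<\omega}$ with $y\in A\iff T^y$ well-founded, and pass through the Kleene--Brouwer ordering to a recursive assignment $y\mapsto a_y$ with $y\in A\iff a_y\in\KO^y$. When $a_y\in\KO^y$, the initial segment of the jump hierarchy below $a_y$, namely $\{m: m<_{\KO^y}a_y\}$ together with the reals $H^y_m$ for such $m$, is $\DI(y)$ and hence $\leq_h y$; I let $x$ code this segment. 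I then take $P(x,y)$ to assert that $x$ codes a \emph{coherent tower}: its domain contains $a_y$ and $1$, the base value is $y$, each successor notation carries the Turing jump of its predecessor's value, each limit notation $3\cdot5^e$ carries the effective join of the values along $\{e\}^y$, and every value $R_m$ satisfies $H(m,y,R_m)$. All these clauses are arithmetic in $(x,y)$, so $P$ is arithmetic, and by construction $y\in A\Rightarrow\exists x\leq_h y\,P(x,y)$.

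The hard part will be the reverse implication of this converse: showing that the mere existence of some $x$, not assumed hyperarithmetic, with $P(x,y)$ already forces $a_y\in\KO^y$, i.e.\ that no spurious tower can sit over a non-notation. The danger is an infinite $<_{\KO^y}$-descending sequence $a_y=m_0\succ m_1\succ\cdots$ inside the domain of $x$; the jump and join clauses alone do not preclude this, because $\leqT$ is ill-founded. The resolution I would use is that the clause $H(m,y,R_m)$ is satisfiable only at genuine notations, so that every $m$ in the domain, and in particular $a_y$, lies in $\KO^y$; where this needs reinforcement I would invoke Spector's $\TI$-boundedness to bound the order types arising along the tower and thereby exclude the descending sequence. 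This boundedness step, matching the certificate to the admissible ordinal $\CK$ relativized to $y$, is where I expect the genuine work to sit; the two existence constructions above are comparatively mechanical.
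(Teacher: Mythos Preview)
The paper states this as a classical result and gives no proof, so there is nothing in the paper to compare your argument against; I evaluate the sketch on its own.

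Your first direction is essentially right. One imprecision: once you split off $e\in\KO^y$, the remaining matrix is not literally arithmetic in $(e,d,y)$, because of the suppressed set quantifier over the real $z$ with $H(e,y,z)$. Writing that clause as $\forall z\,[H(e,y,z)\to\cdots]$ keeps it $\Pi^1_1$, and then closure of $\Pi^1_1$ under number quantification finishes the job.

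The second direction has a genuine gap, and it is not where you think it is. You set yourself the task of showing that ``the mere existence of some $x$, \emph{not assumed hyperarithmetic}, with $P(x,y)$'' forces $a_y\in\KO^y$. That target is unreachable for any arithmetic $P$ compatible with your forward direction. For if both $y\in A\Rightarrow\exists x\leq_h y\,P(x,y)$ and $\exists x\,P(x,y)\Rightarrow y\in A$ held, then $y\in A\Longleftrightarrow\exists x\,P(x,y)$, a $\Sigma^1_1$ definition of the $\Pi^1_1$ set $A$; so $A$ would be $\Delta^1_1$ and the theorem would be vacuous for properly $\Pi^1_1$ sets. Concretely, for your jump-tower $P$, Harrison pseudo-hierarchies do sit over non-notations, so your plan A (``$H(m,y,R_m)$ is satisfiable only at genuine notations'') is simply false, and no appeal to $\Sigma^1_1$-boundedness can rescue the unrestricted implication.

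The point you are missing is that the hypothesis $x\leq_h y$ in the backward direction is not decorative; it must be used. Once the tower $x$ is itself hyperarithmetic in $y$, one can compare the fixed level $H^y_n$ that bounds $x$ against the levels the tower is forced to reproduce on the well-founded part of the predecessors of $a_y$, and extract a contradiction from $a_y\notin\KO^y$. An alternative, cleaner route bypasses towers entirely and goes through the identification of $\Pi^1_1$ with $\Sigma_1$ over $L_{\omega_1^y}[y]$, where the bounded existential quantifier ranges exactly over $\{x:x\leq_h y\}$ by construction. Either way, discarding $x\leq_h y$ is fatal.
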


\begin{theorem}[Sacks\cite{Sack69}]\label{theorem cone null}
If $x$ is non-hyperarithmetical, then $\mu(\{y|y\geq_h x\})=0$.
\end{theorem}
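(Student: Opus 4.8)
The plan is to transfer the classical proof of Sacks' theorem that non-recursive Turing upper cones are null to the hyperdegree setting, using the $H^y_n$ sets fixed above as the analogue of the finite iterated jumps. I argue by contradiction: suppose $x$ is not hyperarithmetical, i.e.\ $x \not\leq_h \emptyset$, yet $\mu(\{y \mid x \leq_h y\}) > 0$. First I would discard the reals of large ordinal. It is well known that $\{y \mid \omega_1^y > \CK\}$ is null, so the positive-measure cone is concentrated on the set of $y$ with $\omega_1^y = \CK$. For such $y$, the machinery fixed above gives that $x \leq_h y$ holds iff $x \leqT H^y_n$ for some $n \in \KO$, i.e.\ iff $\Phi_e^{H^y_n} = x$ for some $n \in \KO$ and some index $e$. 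Hence the positive-measure part of the cone is a countable union $\bigcup_{n \in \KO, e} C_{n,e}$, where $C_{n,e} = \{y \mid \Phi_e^{H^y_n} = x\}$, and by countable additivity some $C_{n,e}$ has positive measure; fix this $n,e$.

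The key observation is that the slices
\[
B_{k,b} = \{y \mid \Phi_e^{H^y_n}(k)\!\downarrow = b\}
\]
are $\DI$ uniformly in $k,b$. Indeed, using the existence and uniqueness of $H^y_n$ for $n \in \KO$, membership of $y$ in $B_{k,b}$ is equivalently expressed as $\exists z\,(H(n,y,z) \wedge \Phi_e^z(k)\!\downarrow = b)$, which is $\TI$, and as $\forall z\,(H(n,y,z) \to \Phi_e^z(k)\!\downarrow = b)$, which is $\QI$. Consequently each $B_{k,b}$ has a hyperarithmetic measure, and the comparison of $\mu(B_{k,b})$ with a fixed rational can be carried out hyperarithmetically, uniformly in $k,b$.

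Now I would apply the Lebesgue density theorem to $C_{n,e}$: since it has positive measure, there is a finite string $\sigma$ with $\mu(C_{n,e}\cap[\sigma]) > \frac{3}{4}\mu([\sigma])$. For each $k$, every $y \in C_{n,e}\cap[\sigma]$ satisfies $\Phi_e^{H^y_n}(k) = x(k)$, so $C_{n,e}\cap[\sigma] \sub B_{k,x(k)}\cap[\sigma]$; since $B_{k,0}$ and $B_{k,1}$ are disjoint, this forces $\mu(B_{k,x(k)}\cap[\sigma]) > \frac{3}{4}\mu([\sigma])$ while $\mu(B_{k,1-x(k)}\cap[\sigma]) < \frac{1}{4}\mu([\sigma])$. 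Thus $x(k)$ is the unique $b$ with $\mu(B_{k,b}\cap[\sigma]) > \frac{1}{2}\mu([\sigma])$, and by the previous paragraph this value is computed hyperarithmetically, uniformly in $k$. Therefore $x \leq_h \emptyset$, contradicting the assumption, and the upper cone is null.

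I expect the main obstacle to be the effective measure theory in the second paragraph: verifying cleanly that the $\DI$ slices $B_{k,b}$ have hyperarithmetic measures and that the majority-vote threshold ``$\mu(B_{k,b}\cap[\sigma]) > \frac{1}{2}\mu([\sigma])$'' is decided hyperarithmetically, uniformly in $k$. The separation of the correct and incorrect values by the fixed gap ($>\frac{3}{4}$ versus $<\frac{1}{4}$) is what makes this robust, since one then only needs to approximate the measures well enough to see which value crosses the halfway threshold, rather than compute them exactly.
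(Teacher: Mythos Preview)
The paper does not prove this theorem; it is quoted from Sacks (1969) as a preliminary fact, so there is no in-paper argument to compare against. Your outline is the standard majority-vote proof and is sound as a strategy, with one caveat about the paper's internal logical order.

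You begin by discarding $\{y \mid \omega_1^y > \CK\}$ as null, calling this ``well known.'' But immediately after stating Theorems~\ref{theorem cone null} and~\ref{theorem x h>o iff x computes longer}, the paper explicitly derives the nullity of $\{y \mid \omega_1^y > \CK\}$ \emph{from} those two theorems: by Theorem~\ref{theorem x h>o iff x computes longer} this set equals $\{y \mid y \geq_h \KO\}$, and since $\KO$ is not hyperarithmetical, it is Theorem~\ref{theorem cone null} itself---the result you are proving---that makes it null. So within this paper's ordering you cannot invoke that fact as a black box; as written, your first step is circular.

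The repair is standard and is what Sacks actually does: one first establishes measure-theoretic uniformity (roughly, a $\QI$ set of positive measure has its ordinal witnesses bounded below $\CK$; cf.\ Theorems~\ref{theorem pi11 measure is pi}--\ref{theorem approximating delta11 set} in the paper). From this the nullity of $\{y \mid \omega_1^y > \CK\}$ follows directly, independently of the cone theorem. Once that is secured, the rest of your argument---the countable decomposition into the $C_{n,e}$, the verification that the slices $B_{k,b}$ are uniformly $\DI$, the Lebesgue density point, and the $\tfrac{3}{4}$-vs-$\tfrac{1}{4}$ majority vote recovering $x$ hyperarithmetically---is correct exactly as you describe. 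Your closing worry about deciding ``$\mu(B_{k,b}\cap[\sigma])>\tfrac{1}{2}\mu([\sigma])$'' hyperarithmetically is indeed resolved by the gap, since one only needs a $\DI$ rational approximation of each measure to within $\tfrac{1}{8}\mu([\sigma])$, which Theorem~\ref{theorem approximating delta11 set} provides uniformly.
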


\begin{theorem}[Sacks \cite{Sacks90}]\label{theorem x h>o iff x computes
longer}
The set $\{x|x \geq_h \KO\}$ is $\Pi^1_1$. Moreover, $x \geq_h \KO$
if and only if $\omega^x_1>\CK$.
\end{theorem}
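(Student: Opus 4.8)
The plan is to establish the characterization (the ``moreover'' clause) first and then deduce the complexity claim from it. For the easy direction, suppose $\KO \leq_h x$, so $\KO \in \DI(x)$. A well-ordering $W$ of order type $\CK$ is recursive in $\KO$ (the ordering $<_{\KO}$ furnishes a $\KO$-recursive presentation of $\CK$), hence $W \in \DI(x)$. By relativized $\TI$-boundedness --- every $\DI(x)$ well-ordering has order type strictly below $\omega_1^x$ --- we get $\CK = \mathrm{otp}(W) < \omega_1^x$. Thus $x \geq_h \KO$ implies $\omega_1^x > \CK$.

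For the converse, suppose $\omega_1^x > \CK$. Unwinding $\omega_1^x$ as the supremum of the order types of $x$-recursive well-orderings, $\CK$ fails to be an upper bound, so there is an $x$-recursive well-ordering $W$ of order type $\geq \CK$. I would use $W$ to decide the $\QI$-complete set $WO$ of indices of recursive well-orderings: for $d$ with $\Phi_d$ a linear ordering, $d \in WO$ iff $\Phi_d$ embeds order-preservingly into $W$. Indeed an order embedding into a well-ordering forces $\Phi_d$ to be well-ordered, while conversely any recursive well-ordering has order type $< \CK \leq \mathrm{otp}(W)$ and so maps onto an initial segment of $W$. The clause ``$\exists f\,(f \colon \Phi_d \hookrightarrow W)$'' is $\TI(W) = \TI(x)$, giving $WO \in \TI(x)$; since $WO$ is $\QI$ this yields $WO \in \DI(x)$, i.e.\ $WO \leq_h x$. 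As $\KO$ and $WO$ are both $\QI$-complete, hence recursively many-one interreducible, we conclude $\KO \leq_h x$.

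Finally, for the complexity claim I would combine the characterization with a direct count of quantifiers. By the ``moreover'' clause, $\{x : x \geq_h \KO\} = \{x : \omega_1^x > \CK\}$, and the latter holds iff there is an index $e$ such that $\Phi_e^x$ is a well-ordering not isomorphic to any recursive well-ordering (equivalently, of order type $\geq \CK$). The step I expect to be the main obstacle is checking that this is genuinely $\QI$ rather than merely $\Delta^1_2$: the predicate ``$\Phi_e^x$ is a well-ordering'' is $\QI(x)$, but ``isomorphic to some recursive well-ordering'' naively carries a $\QI$ clause (``$\Phi_d$ is a well-ordering'') beneath a real quantifier. The key observation is that, conditioned on $\Phi_e^x$ already being a well-ordering, any linear ordering $\Phi_d$ isomorphic to it is automatically well-ordered; so one may write ``isomorphic to a recursive well-ordering'' as $\exists d\,\exists f\,[\Phi_d \text{ a linear order} \wedge f \colon \Phi_e^x \cong \Phi_d]$ with an arithmetical matrix, which is $\TI(x)$. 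Its negation is then $\QI(x)$, the conjunction with the $\QI(x)$ predicate ``$\Phi_e^x$ is a well-ordering'' stays $\QI(x)$, and since $\QI$ is closed under number quantification, prefixing $\exists e$ leaves the whole statement $\QI$. Hence $\{x : x \geq_h \KO\}$ is $\QI$.
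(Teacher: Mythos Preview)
The paper does not supply a proof of this theorem: it is quoted as a background result attributed to Sacks \cite{Sacks90}, with no argument given. Your proof is correct and follows the standard route one finds in Sacks' book.

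One small imprecision worth flagging: the relation $<_{\KO}$ is only a partial order (Kleene's $\KO$ is a tree of notations, not a chain), so it does not literally present $\CK$ as a well-ordering. The fact you need---that some well-ordering of type $\CK$ is recursive in $\KO$---is of course true (for instance, take the $\omega$-indexed sum of all recursive well-orderings, which is recursive in $WO \equiv_m \KO$ and has order type exactly $\CK$ by admissibility of $\CK$), and with this cosmetic fix your easy direction goes through. The converse via ``$\Phi_d$ embeds into $W$'' is the classical argument, and your complexity analysis is handled cleanly: the observation that the inner clause ``$\Phi_d$ is a well-ordering'' becomes redundant once $\Phi_e^x$ is already assumed well-ordered is exactly the move needed to keep the whole predicate $\QI$ rather than slipping to $\Sigma^1_2$.
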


\noindent
A consequence of the last two theorems above is that the set
$\{x\mid\omega_1^x>\CK\}$ is a $\Pi^1_1$ null set.

Given a class $\mathbf{\Gamma}$, an element $x\in \omega^{\omega}$
is called a {\em $\mathbf{\Gamma}$-singleton} if $\{x\}$ is a
$\mathbf{\Gamma}$ set. Note that if $x\in \omega^{\omega}$ is a
$\Pi^1_1$-singleton, then too is $x_0=\{\langle n,m\rangle \mid
x(n)=m\}\equiv_T x$. Hence we do not distinguish
$\Pi^1_1$-singletons between Baire space and Cantor space.

A subset of $2^{\omega}$ is $\mathbf{\Pi}^0_0$ if it is clopen. We
can define $\mathbf{\Pi}^0_{\gamma}$ sets by a transfinite induction
  for all countable $\gamma$. Every such set can
be coded by a real  (for more details see
\cite{Sacks90}). Given a class $\mathbf{\Gamma}$ (for example,
$\mathbf{\Gamma}=\Delta^1_1$) of subsets of $2^{\omega}$, a set $A$ is
$\mathbf{\Pi}^0_{\gamma}(\mathbf{\Gamma})$ if $A$ is
$\mathbf{\Pi}^0_{\gamma}$ and can be coded by a real in~$\mathbf{\Gamma}$.

In the case $\gamma=1$, every hyperarithmetic closed subset of reals
is $\mathbf{\Pi^0_1}(\Delta^1_1)$. We also have the following result
with an easy proof.
\begin{proposition}\label{proposition: separate sigma11}
If $A\subseteq 2^{\omega}$ is $\Sigma^1_1$ and $\mathbf{\Pi}^0_1$,
then $A $ is $\mathbf{\Pi}^0_1(\Sigma^1_1)$.
\end{proposition}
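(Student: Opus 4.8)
The plan is to produce a single $\Sigma^1_1$ tree whose infinite paths are exactly $A$, and to take that tree as the required code. Since $A$ is $\mathbf{\Pi}^0_1$, i.e.\ topologically closed, I would set
$$T_A=\{\sigma\in 2^{<\omega}\mid [\sigma]\cap A\neq\emptyset\}.$$
First one checks that $T_A$ is a tree: if $\tau\prec\sigma$ then $[\sigma]\sub[\tau]$, so $[\sigma]\cap A\neq\emptyset$ forces $[\tau]\cap A\neq\emptyset$, giving downward closure. The key topological step is that, because $A$ is closed, $[T_A]=A$. Indeed, if $x\in A$ then $x\in[x\uhr n]\cap A$ for every $n$, so $x\in[T_A]$; conversely, if $x\in[T_A]$ then $[x\uhr n]\cap A\neq\emptyset$ for all $n$, so every basic neighborhood of $x$ meets $A$, whence $x$ lies in the closure of $A$, which equals $A$. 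Thus $T_A$ is a tree coding the closed set $A$ in the standard way.

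The complexity step is where the hypothesis that $A$ is $\Sigma^1_1$ is used. Since ``$x\in A$'' is a $\Sigma^1_1$ predicate of $x$ and ``$x\succ\sigma$'' is recursive uniformly in $\sigma$, we have
$$\sigma\in T_A\iff \exists x\in 2^{\omega}\,\bigl(x\succ\sigma\ \wedge\ x\in A\bigr),$$
which exhibits $T_A$ as the real-projection of a $\Sigma^1_1$ relation; by closure of $\Sigma^1_1$ under existential real quantification, $T_A$ is $\Sigma^1_1$. Identifying $2^{<\omega}$ with $\omega$ via a recursive bijection, $T_A$ is a $\Sigma^1_1$ subset of $\omega$, i.e.\ a real in $\Sigma^1_1$, and since $[T_A]=A$ it codes the closed set $A$. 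Hence $A$ is $\mathbf{\Pi}^0_1(\Sigma^1_1)$, as required.

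The one point I would be careful to get right — and the only genuine subtlety in this otherwise short argument — is the choice of code. It is tempting to use the canonical presentation $\hat U$ of the open complement $U=2^{\omega}\setminus A$, given by $\sigma\in\hat U\iff[\sigma]\cap A=\emptyset$. But this presentation is only $\Pi^1_1$: its complement is precisely the $\Sigma^1_1$ set of strings meeting $A$, and passing to minimal or dead-end strings does not repair this. So $\hat U$ would not witness the conclusion. The asymmetry between the $\Sigma^1_1$ ``positive'' tree $T_A$ of the closed set and the $\Pi^1_1$ ``negative'' presentation of its open complement is exactly what must be navigated; using $T_A$ sidesteps it and makes the proof go through.
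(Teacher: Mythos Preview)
Your proof is correct and essentially identical to the paper's: the paper defines $z=\{\sigma\mid \exists x(x\in A\wedge x\succ\sigma)\}$, which is exactly your $T_A$, observes that $x\in A\iff\forall n(x\uhr n\in z)$, and notes that $z$ is $\Sigma^1_1$. Your version is simply more detailed, and your closing remark about why the presentation $\hat U$ of the open complement would give only a $\Pi^1_1$ code is a genuinely useful addition.
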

\begin{proof}
Let $z=\{\sigma\mid \exists x(x \in A \wedge x \succ \sigma)\}$.
Then $x \in A$ if and only if $\forall n(x \uh n \in z)$. So $A$ is
$\Pi^0_1(z)$. Obviously $z$ is $\Sigma^1_1$.
\end{proof}

\noindent
Note that Proposition \ref{proposition: separate sigma11} fails if
we replace $\Sigma^1_1$ with $\Pi^1_1$ since $\KO^{\KO}$ is a
$\Pi^1_1$ singleton of hyperdegree greater than $\KO$.

\bigskip
\noindent
The ramified analytical hierarchy was introduced by Kleene, and
applied by Fefferman \cite{Fefer65} and Cohen \cite{Cohen66} to
study forcing,  a tool that turns out to be powerful  in the
investigation of higher randomness theory. We recall some basic
facts from  Sacks \cite{Sacks90} whose notations we mostly
follow:

\bigskip
\noindent
The ramified analytic hierarchy language $\mathfrak{L}(\CK,
\dot{x})$ contains the following symbols:
\begin{enumerate}
\item Number variables: $j,k,m,n,\ldots$;
\item Numerals: $0,1,2,\ldots$;
\item Constant: $\dot{x}$;
\item Ranked set variables: $x^{\alpha},y^{\alpha},\ldots$ where
$\alpha<\CK$;
\item Unranked set variables: $x,y,ldots$;
\item Others symbols include: +, $\cdot$ (times), $'$ (successor) and $\in$.
\end{enumerate}
Formulas are built in the usual way. A formula $\varphi$ is {\em
ranked} if all of its set variables are ranked. Due to its
complexity, the language is not  codable in   a recursive set but
rather in the countable admissible set $L_{\CK}$.

To code the language in a uniform way, we fix a $\Pi^1_1$ path
$\KO_1$ through $\KO$ (by \cite{FeSpe62} such a path exists). Then a
ranked set variable $x^{\alpha}$ is coded by the number $(2,n)$
where $n \in \KO_1$ and $|n|=\alpha$. Other   symbols and formulas
are coded recursively. With such a coding,  the set of G\"odel
number of  formulas is $\Pi^1_1$. Moreover, the set of G\"odel
numbers of ranked formulas of rank less than $\alpha$ is r.e.
uniformly in the unique notation for $\alpha$ in $\KO_1$. Hence
there is a recursive function $f$ so that $W_{f(n)}$ is the set of
G\"odel numbers of the ranked formula of rank less than $|n|$ when
$n \in \KO_1$ ($\{W_e\}_e$ is, as usual,  an effective enumeration
of r.e. sets).

One now defines a structure $\mathfrak{A}(\CK, x)$, where $x$ is a
real, analogous to the way G\"odel's $L$ is defined, by induction on
the recursive ordinals. Only at successor stages are new sets
defined in the structure.
 The  reals constructed at a successor stage are arithmetically
definable from the reals constructed at earlier stages. The details
may be found in \cite{Sacks90}. We define $\mathfrak{A}(\CK,
x)\models \varphi$ for a formula $\varphi$ of $\mathfrak{L}(\CK,
\dot{x})$ by allowing the unranked set variables to range over
$\mathfrak{A}(\CK, x)$, while the symbol  $x^{\alpha}$ will be
interpreted as  the reals built before stage $\alpha$. In fact, the
domain of $\mathfrak{A}(\CK, x)$ is the set $\{y\mid y \leq_h x\}$ if
and only if $\omega_1^x =\CK$ (see \cite{Sacks90}).

A sentence $\varphi$ of $\mathfrak{L}(\CK, \dot{x})$ is said to be
$\Sigma^1_1$ if it is ranked, or of the form $\exists
x_1,\ldots,\exists x_n\psi$ for some formula $\psi$ with no unranked
set variables bounded by a quantifier.

The following result   is a model-theoretic version of
the Gandy-Spector Theorem.

\begin{theorem}[Sacks \cite{Sacks90}]\label{theorem ramified sg thm}
The set $\{(n_{\varphi},x)\mid\varphi \in \Sigma^1_1\wedge
\mathfrak{A}(\CK, x)\models  \varphi \}$ is $\Pi^1_1$,  where
$n_{\varphi}$ is the G\"odel number of $\varphi$. Moreover, for each
$\Pi^1_1$ set $A \subseteq 2^{\omega}$, there is a formula $\varphi
\in \Sigma^1_1$ so that
\begin{enumerate} \item $\mathfrak{A}(\CK, x)\models
\varphi \implies x \in A$;
\item if $\omega^x_1=\CK$, then $ \mathfrak{A}(\CK, x)\models \varphi \Longleftrightarrow
x \in A .$\end{enumerate}
\end{theorem}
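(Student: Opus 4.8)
The plan is to treat the two assertions separately: first the uniform $\QI$ bound on the satisfaction relation, and then the completeness statement that every $\QI$ set arises from a $\Sigma^1_1$ sentence.

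For the $\QI$ bound I would first argue that satisfaction of \emph{ranked} formulas is uniformly $\DI$. The construction of $\mathfrak{A}(\CK,x)$ proceeds by arithmetic definitions at successor stages and unions at limits, so the initial segment consisting of all reals of rank $<\alpha$ is coded by a real that is hyperarithmetic in $x$, uniformly in an $\KO_1$-notation $n$ for $\alpha$; this is an effective transfinite recursion along $\KO_1$, using that the set of G\"odel numbers of ranked formulas of rank $<|n|$ is r.e.\ uniformly in $n$, as set up above. Evaluating a ranked formula then amounts to arithmetic truth in this hyperarithmetic initial segment, so the relation ``$n\in\KO_1$, $\psi$ is ranked of rank $<|n|$, and $\mathfrak{A}(\CK,x)\models\psi$'' is $\DI$. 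I would also record the fact, used repeatedly, that \emph{every} real of $\mathfrak{A}(\CK,x)$ is hyperarithmetic in $x$ (the construction runs only up to $\CK$, with arithmetic steps, so every constructed real is some $x^{(\beta)}$ with $\beta<\CK\leq\omega_1^x$), with equality $\{y\mid y\leq_h x\}=\mathfrak{A}(\CK,x)$ holding exactly when $\omega_1^x=\CK$.

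For a general $\Sigma^1_1$ sentence $\varphi=\exists x_1\cdots\exists x_k\,\psi$, where $\psi$ has no quantified unranked set variables, the unranked quantifiers range over the domain of $\mathfrak{A}(\CK,x)$, hence over reals $y_i\leq_h x$. I would bundle the witnesses $y_1,\dots,y_k$ together with a sufficiently long initial segment of the construction into a single real $w\leq_h x$, and verify that ``$w$ correctly records an initial segment of $\mathfrak{A}(\CK,x)$ whose coded witnesses make $\psi$ true'' is \emph{arithmetic} in $(n_\varphi,w,x)$; the point is that carrying the full construction history along the notation turns every consistency check (successor step, limit step, ranked evaluation of $\psi$) into a local, arithmetic one. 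This expresses $\mathfrak{A}(\CK,x)\models\varphi$ in the form $\exists w\leq_h x\,P(n_\varphi,w,x)$ with $P$ arithmetic, so Theorem~\ref{theorem spector and gandy} yields that the relation is $\QI$; uniformity in $n_\varphi$ follows since the set of G\"odel numbers of $\Sigma^1_1$ sentences is $\QI$. I expect this packaging step to be the main obstacle, since one must check that no genuinely non-arithmetic content --- in particular, certifying well-foundedness of the notation --- is needed for soundness once $w$ is constrained to be hyperarithmetic in $x$.

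For the completeness clause, let $A$ be $\QI$. By Theorem~\ref{theorem spector and gandy} fix an arithmetic $P$ with $x\in A\Leftrightarrow \exists y\leq_h x\,P(y,x)$. Since $P$ is arithmetic it translates into a ranked (indeed, number-quantifier-only) formula $\hat P(y,\dot x)$ of $\mathfrak{L}(\CK,\dot x)$, and I would set $\varphi:=\exists y\,\hat P(y,\dot x)$, which lies in $\Sigma^1_1$. Clause (1) then holds for \emph{all} $x$: any witness $y$ for $\varphi$ in $\mathfrak{A}(\CK,x)$ satisfies $y\leq_h x$, and since arithmetic truth is absolute, $P(y,x)$ holds in reality, whence $x\in A$. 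Clause (2) uses the extra hypothesis: when $\omega_1^x=\CK$ the domain of $\mathfrak{A}(\CK,x)$ is exactly $\{y\mid y\leq_h x\}$, so any hyperarithmetic witness can be found inside the structure, giving the reverse implication and hence the equivalence. The only delicate point here is the faithful, absolute translation of $P$ into $\hat P$, which is routine because the number quantifiers range over $\omega$ in $\mathfrak{A}(\CK,x)$ as well.
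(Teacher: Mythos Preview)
The paper does not prove this theorem; it is quoted from Sacks \cite{Sacks90} as background, so there is no in-paper proof to compare against. Your proposal is correct and is essentially the standard argument one finds in Sacks: show that satisfaction of ranked formulas is uniformly $\DI$ by effective transfinite recursion along $\KO_1$, reduce $\Sigma^1_1$ satisfaction to the Gandy--Spector normal form $\exists w\leq_h x\,P(n_\varphi,w,x)$ with $P$ arithmetic, and for the completeness clause translate the arithmetic matrix from Theorem~\ref{theorem spector and gandy} directly into a $\Sigma^1_1$ sentence of $\mathfrak{L}(\CK,\dot x)$, using that the domain of $\mathfrak{A}(\CK,x)$ is contained in $\{y\mid y\leq_h x\}$ always and equals it when $\omega_1^x=\CK$.

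One small remark on your ``main obstacle'': you are right that well-foundedness of the notation need not be re-certified inside $P$, but the reason is not merely that $w\leq_h x$. Rather, the path $\KO_1$ is fixed in advance and is $\QI$, so membership of the relevant notation in $\KO_1$ is part of the outer $\QI$ wrapper (it contributes to ``$n_\varphi$ is the G\"odel number of a $\Sigma^1_1$ sentence''), not to the arithmetic matrix $P$. With that caveat, your packaging step goes through exactly as you describe.
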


\noindent
Note that if $\varphi$ is ranked, then both the sets
$\{x\mid\mathfrak{A}(\CK, x)\models \varphi\}$ (the G\"odel number of
$\varphi$ is omitted) and $\{x\mid\mathfrak{A}(\CK, x)\models
\neg\varphi\}$  are $\Pi^1_1$. So both sets  are  $\Delta^1_1$. Moreover, if $A
\subseteq 2^{\omega}$ is $\Delta^1_1$, then there is a ranked
formula $\varphi$ so that $x \in A \Leftrightarrow \mathfrak{A}(\CK,
x)\models \varphi$ (see Sacks \cite{Sacks90}).

\begin{theorem}[Sacks \cite{Sack69}]\label{theorem pi11 measure is
pi} The set
\begin{center} $\{(n_{\varphi},p)\mid\mu(\{x\mid\mathfrak{A}(\CK, x)\models
\varphi\})>p \wedge \varphi \in \Sigma^1_1 \wedge p \mbox{ is a
rational number} \}$ \end{center}
 is $\Pi^1_1$ where $n_{\varphi}$ is the G\"odel
number of $\varphi$.
\end{theorem}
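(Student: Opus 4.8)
The plan is to reduce the global measure inequality to an \emph{internal} statement about the ramified structure and then read off $\QI$-ness from the model-theoretic Gandy--Spector theorem (Theorem \ref{theorem ramified sg thm}). Throughout, write $A_\varphi=\{x\mid\mathfrak{A}(\CK,x)\models\varphi\}$; by Theorem \ref{theorem ramified sg thm} this is a $\QI$ set, uniformly in the G\"odel number $n_\varphi$, so the content is entirely in controlling its measure.

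First I would pass to ranked approximations. Since $\varphi$ is $\TI$, write it as $\exists\bar u\,\psi$ with $\psi$ carrying no quantified unranked set variables, and for each $\alpha<\CK$ let $\varphi_\alpha$ be the ranked sentence obtained by restricting the witnesses $\bar u$ to rank below $\alpha$. Because every element of $\mathfrak{A}(\CK,x)$ has some rank $<\CK$, for \emph{every} real $x$ we have $\mathfrak{A}(\CK,x)\models\varphi$ iff $\mathfrak{A}(\CK,x)\models\varphi_\alpha$ for some $\alpha<\CK$; hence $A_\varphi=\bigcup_{\alpha<\CK}A_{\varphi_\alpha}$ as honest sets of reals, and since the union is increasing and $\CK$ has countable cofinality, $\mu(A_\varphi)=\sup_{\alpha<\CK}\mu(A_{\varphi_\alpha})$, so that $\mu(A_\varphi)>p$ iff $\mu(A_{\varphi_\alpha})>p$ for some $\alpha<\CK$. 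By the remark following Theorem \ref{theorem ramified sg thm}, each ranked $A_{\varphi_\alpha}$ is $\DI$, with a $\DI$ Borel code obtained uniformly from $n_\varphi$ and the $\KO_1$-notation for $\alpha$.

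The crucial observation is that the entire quantity $\mu(A_{\varphi_\alpha})$ lives inside the hierarchy. The Borel code $c_\alpha$ for $A_{\varphi_\alpha}$ is hyperarithmetic, hence an element of $\mathfrak{A}(\CK,\emptyset)$ appearing by some recursive level, and the measure of a Borel set is computed from its code by a $\DI$ operation; thus the assertion that the coded measure of $c_\alpha$ exceeds $p$ is an arithmetical statement about $c_\alpha$ and $p$ that $\mathfrak{A}(\CK,\emptyset)$ can evaluate. Consequently I would build, recursively in $n_\varphi$ and $p$, a single $\TI$ ramified sentence $\Theta_{n_\varphi,p}$ asserting ``there is a set in the hierarchy that is the Borel code of $A_{\varphi_\alpha}$ for some rank $\alpha$ and whose coded measure exceeds $p$'', the witness level being supplied by one unranked existential set quantifier. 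Since $\omega_1^{\emptyset}=\CK$, clause (2) of Theorem \ref{theorem ramified sg thm} gives $\mathfrak{A}(\CK,\emptyset)\models\Theta_{n_\varphi,p}$ iff $\exists\alpha<\CK\,(\mu(A_{\varphi_\alpha})>p)$ iff $\mu(A_\varphi)>p$. Fixing the real parameter to be $\emptyset$ in the uniform $\QI$ relation of Theorem \ref{theorem ramified sg thm} and composing with the recursive map $(n_\varphi,p)\mapsto n_{\Theta_{n_\varphi,p}}$ then exhibits the set in the statement as $\QI$.

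The main obstacle is this third step: verifying that ``$\mu(A_{\varphi_\alpha})>p$ for some $\alpha$'' is faithfully captured by a \emph{$\TI$} ramified sentence. This requires (i) that the assignment $\alpha\mapsto c_\alpha$ of Borel codes be uniformly definable over the hierarchy, so that the search for $\alpha$ collapses to a single unranked existential quantifier rather than an unbounded external search over $\KO_1$ (which would only yield a $\Sigma^1_2$ bound), and (ii) that the evaluation $c\mapsto\mu(c)$ of Borel measure, together with the comparison to the rational $p$, be genuinely internal and arithmetical in the hierarchy. Points away from $\{x\mid\omega_1^x=\CK\}$ cause no trouble, since that exceptional set is null by the consequence drawn from Theorems \ref{theorem cone null} and \ref{theorem x h>o iff x computes longer} and therefore does not affect any of the measures involved.
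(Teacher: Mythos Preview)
The paper does not prove this theorem; it is stated in the preliminaries with a citation to Sacks \cite{Sack69} and no argument is given. So there is nothing in the paper to compare your attempt against directly.

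Judged on its own, your outline is essentially the standard argument: pass to ranked approximations $\varphi_\alpha$, use that $A_{\varphi_\alpha}$ is uniformly $\DI$, compute its measure uniformly, and then observe that ``$\exists\alpha<\CK$ (measure $>p$)'' is $\QI$. The technical points you flag in your final paragraph---uniform production of a $\DI$ code for $A_{\varphi_\alpha}$ from $(n_\varphi,\text{notation for }\alpha)$, and internal evaluation of the measure---are exactly the substantive content, and they are correct (this is Sacks' Lemma IV.2.2 in \cite{Sacks90}). One small imprecision: the measure of a Borel-coded set is not literally \emph{arithmetical} in the code but hyperarithmetical (at a level governed by the rank of the code); this is harmless for you since a ranked formula still captures it, but the word ``arithmetical'' overstates things.

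The detour through Theorem~\ref{theorem ramified sg thm} evaluated at $x=\emptyset$ is valid but unnecessary. Once you know that ``$\mu(A_{\varphi_\alpha})>p$'' is $\DI$ uniformly in $n_\varphi$ and the $\KO_1$-notation for $\alpha$, the displayed set is simply
\[
\{(n_\varphi,p)\mid n_\varphi\text{ codes a }\TI\text{ formula }\wedge\ \exists n\,(n\in\KO_1\ \wedge\ \mu(A_{\varphi_{|n|}})>p)\},
\]
which is $\QI$ directly, because ``codes a $\TI$ formula'' and ``$n\in\KO_1$'' are $\QI$, the inner measure condition is $\DI$, and $\QI$ is closed under conjunction and number quantification. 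Packaging this as a $\TI$ sentence $\Theta_{n_\varphi,p}$ of the ramified language and invoking Theorem~\ref{theorem ramified sg thm} at the single point $\emptyset$ reproduces the same closure facts at one remove; it works, but it is the long way around.
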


\begin{theorem}[Sacks \cite{Sack69}]\label{theorem approximating delta11 set}
There is a recursive function $f:\omega \times \omega \to
\omega$ so that for all $n $ which is  G\"odel number of a ranked
formula:
\begin{enumerate}
\item $f(n,p)$ is  G\"odel number of a
ranked formula;
\item the set $\{x\mid\mathfrak{A}(\CK, x)\models
\varphi_{f(n,p)}\}\supseteq \{x\mid\mathfrak{A}(\CK, x)\models
\varphi_n\} $ is open; and
\item $\mu(\{x\mid\mathfrak{A}(\CK, x)\models
\varphi_{f(n,p)}\}- \{x\mid\mathfrak{A}(\CK, x)\models \varphi_n\}
)<\frac{1}{p}$.
\end{enumerate}
\end{theorem}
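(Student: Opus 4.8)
The plan is to prove a slightly stronger statement that also produces an \emph{inner} approximation, because the family of sets approximable from outside by open sets is not evidently closed under complementation, whereas the family of sets that are simultaneously approximable from outside by open sets and from inside by closed sets is a $\sigma$-algebra. Writing $A_n=\{x\mid\mathfrak{A}(\CK,x)\models\varphi_n\}$, I would construct a recursive $g\colon\omega\times\omega\to\omega\times\omega$ so that, whenever $n$ is the G\"odel number of a ranked formula, the two coordinates of $g(n,p)$ are G\"odel numbers of ranked formulas defining respectively an open set $U\supseteq A_n$ and a closed set $C\subseteq A_n$ with $\mu(U\setminus C)<\frac1p$. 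Then taking $f(n,p)$ to be the first coordinate of $g(n,p)$ satisfies clauses (1)--(3) at once, since $U$ is open, $A_n\subseteq U$, and $\mu(U\setminus A_n)\le\mu(U\setminus C)<\frac1p$. The remark after Theorem~\ref{theorem ramified sg thm} ensures that every open or closed $\DI$ set arising in the construction is again defined by a ranked formula, so we never leave the ranked fragment.

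The construction goes by effective transfinite recursion on the rank of $\varphi_n$, following its logical form; the recursion theorem supplies the index for $g$, and this is a legitimate transfinite recursion because, as recalled in the preliminaries, the ranked formulas of rank below $|n|$ are enumerated uniformly from the notation $n\in\KO_1$. In the base case $\varphi_n$ defines a clopen set and we set $U=C=A_n$. Negation is handled by interchanging the two approximations, taking the open complement of $C$ and the closed complement of $U$, which leaves $\mu(U\setminus C)$ unchanged. A binary connective is treated by approximating the two arguments to within $\frac1{2p}$ each and combining. The essential cases are the number quantifiers. For $\varphi_n=\exists m\,\psi(m)$ we have $A_n=\bigcup_m B_m$ with $B_m=\{x\mid\mathfrak{A}(\CK,x)\models\psi(m)\}$. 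For the outer bound I take, from the recursion, open $U_m\supseteq B_m$ with $\mu(U_m\setminus B_m)<2^{-(m+3)}/p$, so that $U=\bigcup_m U_m$ is open and $\mu(U\setminus A_n)<\frac1{4p}$. For the inner bound I first choose $N$ with $\mu(A_n)-\mu(\bigcup_{m<N}B_m)<\frac1{4p}$, then closed $C_m\subseteq B_m$ for $m<N$ with $\sum_{m<N}\mu(B_m\setminus C_m)<\frac1{4p}$, and set $C=\bigcup_{m<N}C_m$, which is closed with $\mu(A_n\setminus C)<\frac1{2p}$. Since $C\subseteq A_n\subseteq U$, we get $\mu(U\setminus C)=\mu(U\setminus A_n)+\mu(A_n\setminus C)<\frac1p$. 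The universal quantifier is dual, truncating on the outer side.

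The truncation step is where the hypotheses of the preliminaries are really used, and it is the main obstacle. Choosing $N$ effectively requires knowing the measures of the $\DI$ sets $\bigcup_{m<N}B_m$ and of $A_n$, and for this I would invoke Theorem~\ref{theorem pi11 measure is pi}: since $\varphi_n$ and $\neg\varphi_n$ are both ranked, hence $\Sigma^1_1$ sentences of the ramified language, comparing $\mu(A_n)$ with rationals from above and from below is $\QI$, so $\mu(A_n)$ is a hyperarithmetic real uniformly in $n$, and likewise for each finite union; one then searches for the first $N$ witnessing the required inequality. The remaining difficulty is purely organizational: one must check that all intermediate open and closed sets stay coded by ranked formulas of rank bounded in terms of that of $\varphi_n$ (so the recursion stays below $\CK$), and that the distribution of the error budget $\frac1p$ across the transfinitely many recursive calls is carried out by a single recursive function. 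Granting the uniformity of the ranked-formula enumeration and the recursion theorem, the measure estimates themselves are routine, and the theorem follows.
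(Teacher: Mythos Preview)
The paper does not prove this theorem; it is stated in the preliminaries as a result of Sacks, with a citation to \cite{Sack69}, and no proof is given. So there is nothing in the paper to compare your argument against.

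That said, your approach---simultaneous outer (open) and inner (closed) approximation by effective recursion on the build-up of the ranked formula, so that negation becomes a swap---is the standard one and is essentially how Sacks proceeds. Two remarks are in order.

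First, you treat only number quantifiers, but ranked formulas also contain ranked \emph{set} quantifiers $\exists y^{\alpha}$, $\forall y^{\alpha}$. These are handled by the same mechanism once one observes that, for fixed $\alpha$, the objects over which $y^{\alpha}$ ranges in $\mathfrak{A}(\CK,x)$ are uniformly indexed by the ranked formulas of rank below $\alpha$ (an r.e.\ set in the notation for $\alpha$, as the preliminaries recall), so the set quantifier reduces to a number quantifier over such indices. You should make this reduction explicit.

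Second, your truncation step is the delicate point, and your diagnosis is correct but the resolution needs sharpening. Invoking Theorem~\ref{theorem pi11 measure is pi} shows that the relevant measure comparisons are $\DI$, so the least suitable $N$ exists and is hyperarithmetic; but $f$ is required to be \emph{recursive} on G\"odel numbers, and a hyperarithmetic search cannot be carried out while computing $f(n,p)$. The fix is to push the search for $N$ into the ranked formula $\varphi_{f(n,p)}$ itself rather than into the computation of $f$: since the measure condition is expressible by a ranked formula whose rank is bounded in terms of that of $\varphi_n$, the description ``the least $N$ such that \ldots'' is itself a ranked term, and the formula ``$x\in\bigcup_{m<N}C_m$ for this least $N$'' is ranked and defines a closed set (the value of $N$ does not depend on $x$). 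With this adjustment the G\"odel number $f(n,p)$ is produced by a genuinely recursive syntactic transformation, as required.
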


\begin{theorem}[Sacks \cite{Sack69} and Tanaka
\cite{Tanaka68}]\label{theorem pi11 measure base thm} If $A$ is a
$\Pi^1_1$ set of positive measure, then $A$ contains a hyperarithmetical real.
\end{theorem}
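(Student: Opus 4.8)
The plan is to push the whole problem onto the conull set $\{x\mid\omega_1^x=\CK\}$, then show that positive measure is already ``witnessed at a recursive ordinal stage,'' producing a $\DI$ set of positive measure from which a hyperarithmetical real is extracted. First I would note that by Theorems~\ref{theorem cone null} and~\ref{theorem x h>o iff x computes longer} the set $\{x\mid\omega_1^x>\CK\}$ is null, so $\{x\mid\omega_1^x=\CK\}$ is conull. Applying Theorem~\ref{theorem ramified sg thm} to the $\QI$ set $A$, I fix a $\TI$ sentence $\varphi$ of $\mathfrak{L}(\CK,\dot x)$ with $\mathfrak{A}(\CK,x)\models\varphi\implies x\in A$ for all $x$, and with the biconditional holding whenever $\omega_1^x=\CK$. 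Setting $A^*=\{x\mid\mathfrak{A}(\CK,x)\models\varphi\}$, we have $A^*\subseteq A$ everywhere, while $A^*$ and $A$ agree on the conull set; hence $\mu(A^*)=\mu(A)=r>0$, and any hyperarithmetical real found inside $A^*$ lies in $A$.

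Next I would exploit the structure of $\varphi=\exists\bar z\,\psi$, where $\psi$ has no quantified unranked variables. For each recursive ordinal $\alpha<\CK$ let $\varphi_\alpha$ be the ranked sentence asserting the existence of witnesses $\bar z$ of rank $<\alpha$ satisfying $\psi$, and put $B_\alpha=\{x\mid\mathfrak{A}(\CK,x)\models\varphi_\alpha\}$. Each $B_\alpha$ is $\DI$ (it is defined by a ranked formula), the family is increasing, and $B_\alpha\subseteq A^*$ since $\varphi_\alpha$ implies $\varphi$. Because the domain of $\mathfrak{A}(\CK,x)$ is exactly $\{y\mid y\leq_h x\}$ when $\omega_1^x=\CK$, any witness for $\varphi$ is built at some stage $\alpha<\CK$, so $A^*$ and $\bigcup_{\alpha<\CK}B_\alpha$ coincide on $\{x\mid\omega_1^x=\CK\}$, whence $\mu(\bigcup_{\alpha<\CK}B_\alpha)=\mu(A^*)=r$.

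The key step is now a cofinality argument: $\CK$ is a countable limit ordinal, so it carries a cofinal increasing $\omega$-sequence, and continuity of measure along the increasing family $(B_\alpha)$ gives $\sup_{\alpha<\CK}\mu(B_\alpha)=\mu(\bigcup_{\alpha<\CK}B_\alpha)=r>0$. Hence some $B_{\alpha_0}$ with $\alpha_0<\CK$ satisfies $\mu(B_{\alpha_0})>0$. This is the heart of the proof: positive measure appears at a recursive ordinal, yielding a $\DI$ set $B:=B_{\alpha_0}\subseteq A^*\subseteq A$ of positive measure. Since every hyperarithmetical real $x$ has $\omega_1^x=\CK$, it now suffices to find a hyperarithmetical member of $B$. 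I would first pass to a closed $\DI$ subset of positive measure: applying Theorem~\ref{theorem approximating delta11 set} to the ranked formula defining $2^\omega\setminus B$ gives, for each $p$, an open $\DI$ set $V_p\supseteq 2^\omega\setminus B$ with $\mu(V_p\setminus(2^\omega\setminus B))<1/p$, so $C_p:=2^\omega\setminus V_p$ is closed and $\DI$ with $C_p\subseteq B$ and $\mu(B\setminus C_p)<1/p$; choosing $1/p<\mu(B)$ produces a closed $\DI$ set $C=[T]$ of positive measure with $T$ a hyperarithmetical tree. I then build a path $x$ greedily, at each node $\sigma=x\uh n$ choosing a child $\sigma\f i\in T$ with $\mu([T]\cap[\sigma\f i])>0$; such a child exists because $\mu([T]\cap[\sigma])=\mu([T]\cap[\sigma\f0])+\mu([T]\cap[\sigma\f1])$, and the resulting $x$ lies in $[T]=C\subseteq A$.

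The main obstacle is making this last construction genuinely hyperarithmetical: the predicate ``$\mu([T]\cap[\tau])>0$'' is a priori only $\QI$ (its negation is $\TI$ by Theorem~\ref{theorem pi11 measure is pi}), so a naive search need not be $\DI$. I would resolve this by using Theorem~\ref{theorem approximating delta11 set} again, now applied to the $\DI$ sets $C\cap[\tau]$, to compute each $\mu([T]\cap[\tau])$ to arbitrary precision as a genuine hyperarithmetical real, by squeezing between the lower-semicomputable measures of its open $\DI$ supersets and the set itself. Maintaining an explicit positive lower bound $\ell_n$ on $\mu([T]\cap[x\uh n])$, at each stage I approximate $\mu([T]\cap[\sigma\f0])$ and $\mu([T]\cap[\sigma\f1])$ to within $\ell_n/4$ and select the larger, which provably retains at least half the current measure. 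This keeps all branch choices $\DI$ and the lower bounds positive, so $x$ is hyperarithmetical and $x\in[T]\subseteq A$, completing the argument.
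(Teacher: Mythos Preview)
The paper does not supply a proof of this theorem; it is stated as a preliminary result and attributed to Sacks and Tanaka. Your argument follows the standard route and is essentially correct: restrict to the conull set $\{x\mid\omega_1^x=\CK\}$, use Theorem~\ref{theorem ramified sg thm} to write $A$ (up to a null set) as the increasing union $\bigcup_{\alpha<\CK}B_\alpha$ of $\DI$ sets given by ranked approximations $\varphi_\alpha$ to the $\TI$ formula $\varphi$, use the countability of $\CK$ to find a single $B_{\alpha_0}$ of positive measure, apply Theorem~\ref{theorem approximating delta11 set} to pass to a closed $\DI$ subset $[T]$, and finally build a hyperarithmetical path through $T$.

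Two remarks on the last step. First, the precision $\ell_n/4$ does not deliver what you claim: if $|a_i-\mu_i|\le\ell_n/4$ and you choose the $i$ with larger $a_i$, you only get $a_i\ge(\mu_0+\mu_1-\ell_n/2)/2\ge\ell_n/4$ and hence $\mu_i\ge 0$, not $\mu_i\ge\ell_n/2$. Tightening to precision $\ell_n/8$ gives $\mu_i\ge\ell_n/4$, so one may take $\ell_{n+1}=\ell_n/4$ and the induction goes through. Second, the appeal to Theorem~\ref{theorem approximating delta11 set} here, while correct, is more than you need. Since $\mu([T]\cap[\sigma])=\inf_{m\ge|\sigma|}|\{\tau\in T\cap 2^m\mid \tau\succeq\sigma\}|/2^m$, the predicate ``$\mu([T]\cap[\sigma])>0$'' is $\Sigma^0_2(T)$ and hence $\DI$. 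Thus the subtree $P=\{\sigma\in T\mid\mu([T]\cap[\sigma])>0\}$ is $\DI$; it contains the root (as $\mu([T])>0$) and has no dead ends (as $\mu_0+\mu_1>0$ forces one child to have positive measure), so its leftmost path is a hyperarithmetical element of $[T]\subseteq A$. This avoids the approximation bookkeeping entirely.
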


We also remind the reader of the higher analog of ML-randomness first studied by \cite{HjNi}.
\begin{definition}
A \emph{$\Pi^1_1$-ML-test} is a sequence $(G_m)_{m\in\omega}$ of open sets such that for each $m$, we have $\mu(G_m)\le 2^{-m}$, and the relation $\{\langle m,\sigma\rangle\mid [\sigma]\subseteq G_m\}$ is $\Pi^1_1$. A real $x$ is \emph{$\Pi^1_1$-ML-random} if $x\not\in\cap_m G_m$ for each $\Pi^1_1$-ML-test $(G_m)_{m\in\omega}$.
\end{definition}

\bigskip
\section{Higher Kurtz random reals and their distribution}

\begin{definition}
Suppose we are given a point class $\mathbf{\Gamma}$ (i.e. a class of sets of
reals). A real $x$ is {\em $\mathbf{\Gamma}$-Kurtz random} if $ x \not\in A$ for
every closed null set $A \in \mathbf{\Gamma}$. Further, $x$ is
said to be {\em Kurtz random} ($y$-Kurtz random) if
$\mathbf{\Gamma}=\Pi^0_1$ $(\mathbf{\Gamma}=\Pi^0_1(y))$.
\end{definition}

\noindent We focus on $\Delta^1_1$, $\Sigma^1_1$ and $\Pi^1_1$-Kurtz
randomness. By the proof of Proposition \ref{proposition: separate
sigma11}, it is not difficult to see that a real $x$ is
$\Delta^1_1$-Kurtz random  if and only if $x$ does not belong to any
$\mathbf{\Pi}^0_1(\Delta^1_1)$  null set.

\begin{theorem}\label{theorem seperating randomness}
$\Pi^1_1$-Kurtz randomness $\subset$ $\Sigma^1_1$-Kurtz randomness
$=$ $\Delta^1_1$-Kurtz-ran\-dom\-ness.
\end{theorem}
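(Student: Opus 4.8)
The statement bundles three assertions: the inclusions $\QI$-Kurtz $\subseteq\DI$-Kurtz and $\TI$-Kurtz $\subseteq\DI$-Kurtz, the equality $\TI$-Kurtz $=\DI$-Kurtz, and the strictness of the first inclusion. The inclusions are immediate from the inclusions of test classes: since $\DI\subseteq\TI$ and $\DI\subseteq\QI$, every closed null $\DI$ set is both a $\TI$-test and a $\QI$-test, so a $\TI$- or $\QI$-Kurtz random real is \emph{a fortiori} $\DI$-Kurtz random. Thus the real content is (a) that $\DI$-Kurtz randomness already implies $\TI$-Kurtz randomness, and (b) that $\QI$-Kurtz randomness is properly stronger than $\DI$-Kurtz randomness.

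For (a) the plan is to show that every closed null $\TI$ set $A$ is contained in a closed null $\DI$ set $B$; then any $\DI$-Kurtz random $x$ avoids $B$, hence avoids $A$. Writing $U=2^{\omega}\setminus A$, Proposition \ref{proposition: separate sigma11} exhibits $U$ as an open $\QI$ set of measure $1$ whose presentation $\hat U$ is $\QI$. I would fix a $\QI$-norm on $\hat U$, let $\hat U_{\alpha}$ be its initial segments (each $\DI$ uniformly in a notation for $\alpha<\CK$), and set $V_{\alpha}=\bigcup_{\sigma\in\hat U_{\alpha}}[\sigma]$, an increasing family of $\DI$ open sets with $\bigcup_{\alpha}V_{\alpha}=U$, so that $\mu(V_{\alpha})\uparrow 1$. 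The crux is that a \emph{single} $\gamma<\CK$ already achieves $\mu(V_{\gamma})=1$. By the remarks following Theorem \ref{theorem ramified sg thm} each $V_{\alpha}$ equals $\{x\mid\mathfrak A(\CK,x)\models\varphi\}$ for a ranked $\varphi$ obtained uniformly from the notation, so by Theorem \ref{theorem pi11 measure is pi} the relation ``$\mu(V_{\alpha})>q$'' is $\QI$ in the notation and rational $q$, hence $\Sigma_1$ over $L_{\CK}$. Were $\mu(V_{\alpha})<1$ for every $\alpha$, the map taking $k$ to the least $\alpha$ with $\mu(V_{\alpha})>1-2^{-k}$ would be a $\Sigma_1$-over-$L_{\CK}$ cofinal map $\omega\to\CK$, contradicting the admissibility of $\CK$. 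Taking $B=2^{\omega}\setminus V_{\gamma}$ completes (a).

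For (b) the plan is to produce a real $x$ that is $\DI$-Kurtz random yet lies in a closed null $\QI$ set. The natural witness is a $\QI$-singleton: then $\{x\}$ is itself a closed null $\QI$ set, so $x$ is not $\QI$-Kurtz random, while $\{x\}$ fails to be $\DI$ (since $x$ will be non-hyperarithmetical), which is exactly what must hold if $x$ is to be $\DI$-Kurtz random. I would build $x$ by a genericity construction meeting every dense open $\DI$ set --- equivalently avoiding every closed null $\DI$ set, as the complement of such a set is open and dense --- which secures $\DI$-Kurtz randomness. To force $\{x\}$ to be $\QI$, I would run the construction inside the ramified analytic hierarchy and arrange that $x$ is the \emph{unique} real satisfying a fixed $\TI$ sentence $\varphi$ over $\mathfrak A(\CK,\cdot)$; Theorem \ref{theorem ramified sg thm} then yields that $\{x\}$ is $\QI$. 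Observe that such an $x$ is necessarily non-hyperarithmetical (otherwise $\{x\}$ would be a $\DI$-test it fails), and hence, by Theorem \ref{theorem x h>o iff x computes longer}, must satisfy $\omega_1^{x}>\CK$.

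The routine inclusions, and the reduction of (a) to the boundedness step, are straightforward. The main obstacle is the construction in (b): one must simultaneously (i) meet enough dense $\DI$ sets to guarantee $\DI$-Kurtz randomness and (ii) pin $x$ down by a single $\TI$ formula over $\mathfrak A(\CK,\cdot)$, so that $\{x\}$ comes out $\QI$ rather than merely $\Sigma^1_2$. Balancing the genericity required for (i) against the definability and uniqueness required for (ii), while keeping the ordinal height of the construction below $\CK$, is the delicate point.
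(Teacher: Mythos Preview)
Your treatment of the equality $\TI$-Kurtz $=\DI$-Kurtz is correct and is essentially the paper's argument recast in the language of $\QI$-norms and admissibility: both come down to the observation that the conull open complement of a closed null $\TI$ set has a $\QI$ presentation, and $\QI$-bounding forces some $\DI$ stage of that presentation already to have full measure. The paper phrases this via $\QI$-uniformization of the relation ``measure exceeds $1-1/k$ by stage $n\in\KO$'' and then bounds the resulting total $\QI$ function into $\KO$, but the content is identical. (One quibble: Proposition~\ref{proposition: separate sigma11} gives a $\TI$ tree for $A$, not a $\QI$ presentation of $U$; you want $\hat U=\{\sigma:[\sigma]\cap A=\emptyset\}$ directly, which is $\QI$ since $A$ is $\TI$.)

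For the strict inclusion you diverge from the paper, and your route has a genuine gap. The paper does not construct anything: it simply cites the $\QI$-ML-random real $x\equiv_h\KO$ from \cite{HjNi,CNY07}. Every $\QI$-ML-random is $\DI$-Kurtz random, while this particular $x$ is a $\QI$-singleton, so $\{x\}$ is a closed null $\QI$ set witnessing failure of $\QI$-Kurtz randomness. Your proposed genericity construction, by contrast, carries an internal tension you have not resolved. You correctly note that the witness must satisfy $\omega_1^{x}>\CK$ --- though your justification is incomplete: Theorem~\ref{theorem x h>o iff x computes longer} alone does not take you from ``non-hyperarithmetical'' to $\omega_1^x>\CK$; you need the further fact that $\QI$-singletons lie in $L_{\omega_1^x}$, or equivalently Proposition~\ref{proposition: delta11 vs pi11 at ck}. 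But the ramified-hierarchy framework you invoke (Theorem~\ref{theorem ramified sg thm}) is built precisely to analyze reals with $\omega_1^x=\CK$; a generic for that forcing will have $\omega_1^x=\CK$, and then Proposition~\ref{proposition: delta11 vs pi11 at ck} makes it $\QI$-Kurtz random as soon as it is $\DI$-Kurtz random --- the opposite of what you want. Arranging that $x$ is the \emph{unique} $y$ with $\mathfrak A(\CK,y)\models\varphi$, across all $y$ and not merely inside $\mathfrak A(\CK,x)$, while simultaneously forcing $\omega_1^x>\CK$, is not something a genericity argument in this framework delivers. The obstacle you flag at the end is therefore real, and the clean way around it is exactly the paper's: quote an off-the-shelf $\QI$-singleton that is already known to be random enough. (Also minor: meeting every dense open $\DI$ set is strictly stronger than avoiding every closed null $\DI$ set, since the complement of a dense open set need not be null; your ``equivalently'' overstates, though the direction you need is fine.)
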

\begin{proof}

\noindent It is obvious that $\Pi^1_1$-Kurtz randomness $\subseteq
\Delta^1_1$-Kurtz randomness and $\Sigma^1_1$-Kurtz randomness
$\subseteq\Delta^1_1$-Kurtz randomness. It suffices to prove that
$\Sigma^1_1$-Kurtz randomness $=$ $\Delta^1_1$-Kurtz-ran\-dom\-ness
and $\Pi^1_1$-Kurtz randomness $\subset \Delta^1_1$-Kurtz
randomness.

Note that every $\Pi^1_1$-ML-random is $\Delta^1_1$-Kurtz random and
there is a $\Pi^1_1$-ML-random real $x \equiv_h \KO$  (see
\cite{HjNi} and \cite{CNY07}). But $\{x\}$ is a $\Pi^1_1$ closed
set. So $x$ is not $\Pi^1_1$-Kurtz random. Hence
$\Pi^1_1$-Kurtz randomness $\subset \Delta^1_1$-Kurtz randomness.

Suppose we are given a $\Pi^1_1$ open set $A$ of measure 1. Define
$$
x=\{\sigma\in
2^{<\omega}\mid \forall y(y\succ \sigma \implies y \in A)\}.
$$
Then $x$ is a $\Pi^1_1$ real coding $A$ (i.e. $y\in A$ if and only
if there is a $\sigma \in x$ for which $y \succ \sigma$, or $y\in
[\sigma]$). So there is a recursive function $f:2^{<\omega} \to
\omega$ so that $\sigma \in x$ if and only if $f(\sigma)\in \KO$.
Define a $\Pi^1_1$ relation $R\subseteq \omega\times \omega$ so that
$(k,n)\in R$ if and only if $n \in \KO$ and $\mu(\bigcup
\{[\sigma]\mid \exists m\in \KO_n(f(\sigma)=m)\})>1-\frac{1}{k}$.
Obviously $R$ is a $\Pi^1_1$ relation which can be uniformized by a
$\Pi^1_1$ function $f^*$ (see \cite{Mosch1980}). Since $\mu(A)=1$,
$f^*$ is a total function. So the range of $f^*$ is bounded by a
notation $n \in \KO$. Define $B=\{y \mid \exists \sigma (y \succ
\sigma \wedge f(\sigma) \in \KO_n)\}$. Then $B\subseteq A$ is a
$\Delta^1_1$ open set with measure 1. So every $\Pi^1_1$ open conull
set has a $\Delta^1_1$ open conull subset. Hence $\Sigma^1_1$-Kurtz
randomness equals $\Delta^1_1$-Kurtz randomness.
\end{proof}

\noindent It should be pointed out that, by the proof of Theorem \ref{theorem seperating randomness}, not every $\Pi^1_1$-ML-random real is $\Pi^1_1$-Kurtz random.

\bigskip

\noindent
The following result clarifies the relationship between $\Delta^1_1$- and
$\Pi^1_1$-Kurtz randomness.

\begin{proposition}\label{proposition: delta11 vs pi11 at ck}
If $\omega_1^x=\CK$, then $x$ is $\Pi^1_1$-Kurtz random if and only if
$x$ is $\Delta^1_1$-Kurtz random.
\end{proposition}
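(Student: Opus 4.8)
The plan is to prove the two directions separately, with the forward direction trivial and the reverse direction carrying all the weight. Since every $\Delta^1_1$ set is $\Pi^1_1$, a $\Delta^1_1$ closed null set is in particular a $\Pi^1_1$ closed null set; hence any $x$ avoiding all $\Pi^1_1$ closed null sets avoids all $\Delta^1_1$ ones. This gives ``$\Pi^1_1$-Kurtz random $\Rightarrow$ $\Delta^1_1$-Kurtz random'' with no hypothesis on $\omega_1^x$. For the converse I would argue by contraposition: assuming $\omega_1^x=\CK$ and that $x$ lies in some $\Pi^1_1$ closed null set $A$, I would produce a closed null set with a $\Sigma^1_1$ tree presentation containing $x$, and then invoke Theorem \ref{theorem seperating randomness} to conclude that $x$ is not $\Delta^1_1$-Kurtz random.

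The heart of the argument is to replace the $\Pi^1_1$ set $A$ by a $\Delta^1_1$ subset still containing $x$. First I would apply the model-theoretic Gandy--Spector theorem (Theorem \ref{theorem ramified sg thm}) to obtain a $\Sigma^1_1$ sentence $\varphi$ of $\mathfrak{L}(\CK,\dot{x})$ with $\{y\mid \mathfrak{A}(\CK,y)\models\varphi\}\subseteq A$ and with equivalence to ``$y\in A$'' whenever $\omega_1^y=\CK$. Since $x\in A$ and $\omega_1^x=\CK$, we get $\mathfrak{A}(\CK,x)\models\varphi$. Writing $\varphi=\exists z_1\cdots\exists z_n\,\psi$ with $\psi$ carrying only ranked quantifiers, the witnesses realizing the leading existentials are elements of $\mathfrak{A}(\CK,x)$ and therefore have ranks below $\CK$; bounding the $z_i$ by a common rank $\alpha_0<\CK$ converts $\varphi$ into a \emph{ranked} sentence $\theta$ with $\mathfrak{A}(\CK,x)\models\theta$. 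By the remark following Theorem \ref{theorem ramified sg thm}, the set $A_{\alpha_0}=\{y\mid\mathfrak{A}(\CK,y)\models\theta\}$ is $\Delta^1_1$, and by construction $x\in A_{\alpha_0}\subseteq\{y\mid\mathfrak{A}(\CK,y)\models\varphi\}\subseteq A$.

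Now I would exploit that $A$ is closed. Since $A_{\alpha_0}\subseteq A$ and $A$ is closed, the closure satisfies $\overline{A_{\alpha_0}}\subseteq A$, so $\mu(\overline{A_{\alpha_0}})\le\mu(A)=0$; thus $\overline{A_{\alpha_0}}$ is a closed null set still containing $x$. Moreover its tree of extendible nodes $T_0=\{\sigma\mid\exists y(y\succ\sigma\wedge y\in A_{\alpha_0})\}$ is a projection of the $\Delta^1_1$ set $A_{\alpha_0}$, hence $\Sigma^1_1$, and $[T_0]=\overline{A_{\alpha_0}}$. Exactly as in Proposition \ref{proposition: separate sigma11} this exhibits $\overline{A_{\alpha_0}}$ as a $\mathbf{\Pi}^0_1(\Sigma^1_1)$ null set containing $x$, so $x$ is not $\Sigma^1_1$-Kurtz random, and Theorem \ref{theorem seperating randomness} then yields that $x$ is not $\Delta^1_1$-Kurtz random, completing the contrapositive.

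I expect the delicate point to be the passage from $\varphi$ to the ranked $\theta$: it is here that $\omega_1^x=\CK$ is indispensable, guaranteeing that the existential witnesses sit at some bounded stage $\alpha_0<\CK$ of $\mathfrak{A}(\CK,x)$ so that $x$ is pinned into a single $\Delta^1_1$ approximant $A_{\alpha_0}$. Without this hypothesis the witnesses could be cofinal in the recursive ordinals, and indeed the failure of the equivalence in general is witnessed by points such as $\KO^{\KO}$ with $\omega_1^{\cdot}>\CK$. The complementary subtlety is that closedness of $A$ is precisely what keeps $\overline{A_{\alpha_0}}$ null; for a general $\Pi^1_1$ null set the closure of a $\Delta^1_1$ approximant could have positive measure, so the argument genuinely uses both hypotheses of the proposition.
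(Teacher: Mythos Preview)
Your proposal is correct and follows essentially the same route as the paper: both use the ramified Gandy--Spector theorem to drop from the $\Pi^1_1$ set $A$ to a $\Delta^1_1$ approximant (your $A_{\alpha_0}$, the paper's $B$) containing $x$ by bounding the rank of the existential witness, then pass to its closure via the $\Sigma^1_1$ tree of extendible nodes, and finally invoke Theorem~\ref{theorem seperating randomness} to obtain a $\Delta^1_1$ closed null set containing $x$. The only cosmetic difference is that the paper cites the \emph{proof} of Theorem~\ref{theorem seperating randomness} to produce a $\Delta^1_1$ closed null superset of $[T]$, whereas you cite its \emph{statement} to conclude directly that $x$ fails $\Delta^1_1$-Kurtz randomness; these are equivalent.
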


\begin{proof}
Suppose that $\omega_1^x=\CK$ and $x$ is $\Delta^1_1$-Kurtz random.
If $A$ is a $\Pi^1_1$ closed null set so that $x \in A$, then by
Theorem \ref{theorem ramified sg thm}, there is a  formula
$\varphi(z,y)$ whose only unranked set variables are $z$ and $y$  so
that the formula $\exists z \varphi(z,y)$ defines $A$.  Since
$\omega_1^x=\CK$, $x \in B=\{y\mid \mathfrak{A}(\CK, y)\models \exists
z^{\alpha}\varphi(z^{\alpha},y) \}\subseteq A$ for some recursive
ordinal $\alpha$. Define $T=\{\sigma\in 2^{<\omega}\mid \exists y \in
B(y \succ \sigma)\}$.  Obviously $B\subseteq [T]$. Since $B$ is
$\Delta^1_1$, $[T]$ is $\Sigma^1_1$. Since $A$ is closed, $B\subseteq A$, and $[T]$ is the closure of $B$, we have $[T]\subseteq A$. Hence since $A$ is null, so is $[T]$. By the proof of Theorem \ref{theorem seperating randomness}, there is a $\Delta^1_1$ closed null set $C \supseteq [T]$. Hence $x \in C$, a contradiction.
\end{proof}

\noindent
From the proof of Theorem \ref{theorem seperating randomness}, one
sees that every hyperdegree above $\KO$ contains a $\Delta^1_1$-Kurtz random real. But this fails for $\Pi^1_1$-Kurtz randomness. We say that a hyperdegree
$\mathbf{d}$ is a {\em base for a cone of   $\mathbf{\Gamma}$-Kurtz randoms}
if for every hyperarithmetic degree $\mathbf{h}\geq\mathbf{d}$,
$\mathbf{h}$ contains a $\mathbf{\Gamma}$-Kurtz random real.

The hyperdegree of $\KO$ is a base for a cone of  $\Delta^1_1$-Kurtz randoms as
proved in Theorem \ref{theorem seperating randomness}. In   Corollary \ref{corollary: nonbase for delta11 kurtz random} we will show that not every nonzero hyperdegree is a base of a cone of $\Delta^1_1$-Kurtz randoms.

Is there a base for a cone of $\Pi^1_1$-Kurtz randoms? If
  such a base $\mathbf{b}$ exists, then $\mathbf{b}$ is not
hyperarithmetically reducible to any $\Pi^1_1$ singleton. Intuitively, this means that such
bases must be   complex.

To obtain such a base we need a lemma.

\begin{lemma}\label{lemma Stephan lemma}
For any reals $x$ and $z\geq_T x'$, there is an $x$-Kurtz random
real $y\equiv_T z$.
\end{lemma}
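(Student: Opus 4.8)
The plan is to build $y$ by a finite-extension construction that is computable in $z$, driving $y$ into the open, dense, conull complement of each null closed $\Pi^0_1(x)$ set, while simultaneously coding $z$ into $y$ at positions that a reader can locate from $y$ with no oracle at all. Fix a uniform enumeration $(U_e)_{e\in\omega}$ of all $x$-r.e. open subsets of $2^\omega$, so that the sets $P_e=2^\omega\setminus U_e$ range over all $\Pi^0_1(x)$ closed sets. Since a closed null set is nowhere dense, each $U_e$ for which $P_e$ is null is open dense of measure $1$, and it suffices to force $y\in U_e$ for every such $e$.

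First I would lay out the coding frame. Partition $\omega$ into consecutive blocks $B_0,B_1,\dots$ of computable, strictly increasing lengths $L_s$, and reserve the final bit of each block $B_s$ as the coding location for $z(s)$; all block boundaries, and hence all coding locations, are computable independently of $x$ and $z$. The point is that decoding becomes trivial: $z(s)$ is read off $y$ at a fixed computable position, so $z\leqT y$ automatically. This is exactly what lets us reach the degree $z$ on the nose rather than merely $y\oplus x'$.

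Next comes the construction itself, carried out with oracle $z$ (note $x\leqT x'\leqT z$, so $z$ enumerates every $U_e$). Schedule the indices $e$ so that each is revisited infinitely often with $L_s\to\infty$. At stage $s$, with $\tau$ the portion of $y$ already committed before block $B_s$, perform a bounded search — over all strings $v$ of length $L_s-1$ and over the first $t_s$ steps of the enumeration of $U_{e_s}$, where $t_s$ is a computable timeout — for a $v$ with $[\tau\f v\f z(s)]\subseteq U_{e_s}$; set $B_s=v\f z(s)$ if one is found and $B_s=0^{L_s-1}\f z(s)$ otherwise. This search always halts, so $y$ is total and $y\leqT z$. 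If $P_e$ is null then $U_e$ is dense, so from $\tau$ there is a finite $w$ with $[\tau\f w]\subseteq U_e$; at a visit to $e$ with $L_s-1\geq|w|$ and $t_s$ large enough to confirm the inclusion, the search succeeds with a $v$ extending $w$, permanently driving $y$ into the open set $U_e$. Hence $y$ avoids every null $P_e$ and is $x$-Kurtz random.

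I expect the genuine obstacle to be the coding, not the randomness forcing. One cannot code an infinite real into fixed coordinates of a Kurtz random $y$: reserving infinitely many positions $\ell_n$ with $y(\ell_n)=z(n)$ places $y$ in the null closed set $\{w\mid w(\ell_n)=z(n)\ \forall n\}$, contradicting Kurtz randomness. The adaptive scheme above sidesteps this trap because only the single last bit of each growing block is reserved while the randomness extension is absorbed into the earlier body bits; the price is that each $U_e$ is met only gradually over its infinitely many scheduled visits, which is why the timeouts $t_s$ and the lengths $L_s$ must both tend to infinity. Verifying that these two demands coexist — a halting bounded search at every stage, for totality and $y\leqT z$, together with eventual success on every dense $U_e$, for randomness — is the one point that needs care; everything else is bookkeeping, and the construction yields $y\equT z$ with $y$ being $x$-Kurtz random.
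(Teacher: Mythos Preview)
Your argument for $x$-Kurtz randomness has a genuine gap. You claim that if $\mu(U_e)=1$ then ``at a visit to $e$ with $L_s-1\ge |w|$ and $t_s$ large enough\ldots the search succeeds with a $v$ extending $w$.'' But the witness $w$ with $[\tau\f w]\subseteq U_e$ depends on the \emph{current} prefix $\tau$, and $\tau$ has grown by the time of the next visit; the new required $w$ may be longer still. Nothing prevents the escape length from outpacing your computable block lengths $L_s$ forever. Concretely, let $g\leq_T x$ be fast-growing and set $P_e=\{w:\forall n\, w(g(n))=0\}$, a null $\Pi^0_1(x)$ set. From a prefix $\tau$ with $\tau(g(n))=0$ for all $g(n)<|\tau|$, getting $[\tau\f v]\subseteq U_e$ requires $|v|$ to reach the next value of $g$, which can exceed any computable $L_s$ you fixed in advance. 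If the construction happens never to set such a bit to $1$ while serving other requirements, $y\in P_e$. (Also, since $z\geq_T x'$ you can decide $[\sigma]\subseteq U_e$ outright; the timeout $t_s$ is unnecessary, but dropping it does not fix the length problem.)

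Your diagnosis in the final paragraph is inverted. Fixed coding positions are \emph{not} the obstacle: the set $\{w:\forall n\, w(\ell_n)=z(n)\}$ is only $\Pi^0_1(z)$, not $\Pi^0_1(x)$, and so poses no threat to $x$-Kurtz randomness. The real obstacle is the randomness side: to guarantee entry into every conull $U_e$ you must allow extensions of data-dependent length, and \emph{that} is what breaks a fixed-position coding scheme.

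The paper resolves both issues at once. At stage $s+1$ it uses $x'$ (via $z$) to decide the $\Sigma^0_1(x)$ question ``does more than half of $[\sigma_s]$ lie inside $U_s^x$?''; if $\mu(U_s^x)=1$ the answer is yes, an unbounded search then finds a suitable extension, and a single visit suffices. Because the resulting block lengths are variable, the coding of $z(s)$ is done not at a fixed coordinate but by appending a long constant run $(z(s))^{l_n-1}$ on an interval of the tower $l_{n+1}=2^{l_n}$, while the preceding extension is chosen to avoid creating spurious constant intervals. Decoding then finds the $n$th interval $[l_m,l_{m+1})$ on which $y$ is constant and reads off $z(n)=y(l_m)$, with no need to know the block boundaries. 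If you want to keep your overall architecture, the minimal repair is: replace the bounded-length search by the $x'$-decided unbounded one, and replace fixed last-bit coding by a self-delimiting pattern (such as the constant-run device) recognizable from $y$ alone.
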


\begin{proof}
  Fix an enumeration of the
$x$-r.e.\ open sets $\{U^x_n\}_{n\in \omega}$.

We inductively define an increasing sequence of binary strings $\{\sigma_s\}_{s<\omega}$.

\bigskip
\noindent
Stage 0. Let $\sigma_0$ be the  empty string.

\bigskip
\noindent
Stage $s+1$. Let $l_0=0$, $l_1=|\sigma_s|$, and $l_{n+1}=2^{l_n}$
for all $n>1$. For every $n>1$, let $$A_n=\{\sigma \in 2^{l_n-1}\mid
\exists m<n\forall i\forall j(l_m\leq i, j<l_{m+1}\implies
\sigma(i)=\sigma(j)) \}.$$ Then $$|A_n|\leq 2\cdot 2^{l_{n-1}}.$$
In other words,
$$\mu(\bigcup \{[\sigma]\mid \sigma \succeq \sigma_s\wedge \sigma
\not\in A_n\})\geq 2^{-l_1}\cdot (1-2^{l_n+1-l_{n+1}}).$$
Case(1): There is  some $m>l_1+1$ so that $|\{\sigma\succeq
\sigma_s\mid \sigma\in 2^m \wedge [\sigma]\subseteq
U_s^x\}|>2^{m-l_1-1}$. Let $n= m+1$. Then $l_{n+1}-1-l_n>2$ and
$l_n>m$. So there must be some $\sigma \in 2^{l_n-1}-A_n$ so that
there is a $\tau \preceq \sigma $ for which $[\tau]\subseteq U^x_s$
and $\tau \in 2^m$.

Let $\sigma_{s+1}=\sigma^{\smallfrown}(z(s))^{l_{n}-1}$.

\bigskip
\noindent
Case(2): Otherwise. Let
$\sigma_{s+1}=\sigma_s^{\smallfrown}(z(s))^{l_1-1}$.

This finishes the construction at stage $s+1$.

\bigskip
\noindent
Let $y=\bigcup_s \sigma_s$.

Obviously the construction is recursive in $z$. So $y\leq_T z$.
Moreover, if $U_n^x$ is of measure 1, then Case (1) happens at the
stage $n+1$. So $y$ is $x$-Kurtz random.

Let $l_0=0, l_{n+1}=2^{l_n}$ for all $n\in \omega$. To compute
$z(n)$ from $y$, we $y$-recursively find the $n$-th $l_m$ for which
for all $i,j$ with $l_m\leq i<j<l_{m+1}$, $y(i)=y(j)$. Then
$z(n)=y(l_m)$.
\end{proof}

\noindent Let $\mathcal{Q}\subseteq \omega\times 2^{\omega}$ be a
universal $\Pi^1_1$ set. In other words, $\mathcal{Q}$ is a
$\Pi^1_1$ set so that  every $\Pi^1_1$ set is some
$\mathcal{Q}_n=\{x\mid (n,x)\in \mathcal{Q}\}$. By Theorem 2.2.3 in
\cite{Kech73}, the real $x_0=\{n\mid \mu(\mathcal{Q}_n)=0\}$ is
$\Sigma^1_1$. Let
$$\mathfrak{c}=\{(n,\sigma)\mid n\in x_0 \wedge \exists x((n,x)\in
\mathcal{Q}\wedge \sigma\prec x)\}\subseteq \omega\times
2^{<\omega}.$$ Then $\mathfrak{c}$ can be viewed as a $\Sigma^1_2$
real. Since every $\Pi^1_1$ null closed set is
$\Pi^0_1(\mathfrak{c})$, every $\mathfrak{c}$-Kurtz random real is
$\Pi^1_1$-Kurtz random.

\begin{theorem}\label{theorem: base for pi11 random}
$\mathfrak{c}$ is a base for a cone of $\Pi^1_1$-Kurtz randoms.
\end{theorem}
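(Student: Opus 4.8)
The plan is to reduce the statement to Lemma~\ref{lemma Stephan lemma}, combined with the observation recorded just before the theorem that every $\mathfrak{c}$-Kurtz random real is $\Pi^1_1$-Kurtz random. Concretely, I fix an arbitrary hyperdegree $\mathbf{h}\geq_h\mathfrak{c}$ and choose a representative $z\in\mathbf{h}$, so that $z\geq_h\mathfrak{c}$. The goal is to produce a $\Pi^1_1$-Kurtz random real lying in $\mathbf{h}$; by the cited observation it suffices to find one in $\mathbf{h}$ that is $\mathfrak{c}$-Kurtz random.

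The only gap between what we have and what Lemma~\ref{lemma Stephan lemma} demands is that the lemma is phrased with the Turing jump: to manufacture a $\mathfrak{c}$-Kurtz random $y\equiv_T w$ it requires an oracle $w\geq_T\mathfrak{c}'$, whereas I only know $z\geq_h\mathfrak{c}$. I would bridge this by passing to the join $w=z\oplus\mathfrak{c}'$. Since the jump is $\Delta^0_2(\mathfrak{c})$, hence $\Delta^1_1(\mathfrak{c})$, we have $\mathfrak{c}'\leq_h\mathfrak{c}\leq_h z$, so $w\equiv_h z$ and therefore $w\in\mathbf{h}$; at the same time $w\geq_T\mathfrak{c}'$ trivially. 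Thus $w$ is a representative of $\mathbf{h}$ satisfying the hypothesis of the lemma with $x:=\mathfrak{c}$.

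Applying Lemma~\ref{lemma Stephan lemma} with $x=\mathfrak{c}$ and the oracle taken to be $w$ then yields a $\mathfrak{c}$-Kurtz random real $y$ with $y\equiv_T w$. Since $y\equiv_T w\equiv_h z$, the real $y$ lies in $\mathbf{h}$, and by the remark preceding the theorem $y$ is $\Pi^1_1$-Kurtz random. As the hyperdegree $\mathbf{h}\geq_h\mathfrak{c}$ was arbitrary, every hyperdegree above $\mathfrak{c}$ contains a $\Pi^1_1$-Kurtz random real, which is exactly the assertion that $\mathfrak{c}$ is a base for a cone of $\Pi^1_1$-Kurtz randoms.

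I do not expect a genuinely hard step once the construction in Lemma~\ref{lemma Stephan lemma} is in hand; the one point that needs care is the interface between Turing and hyperarithmetic reducibility, namely the verification that adjoining $\mathfrak{c}'$ keeps us inside the same hyperdegree. This rests entirely on the elementary fact that $a'\leq_h a$ for every real $a$, and it is precisely this fact that renders the $\Sigma^1_2$ complexity of $\mathfrak{c}$ harmless: although $\mathfrak{c}$ is far from hyperarithmetic, its jump contributes nothing new above it in the $\leq_h$ ordering, so the Turing-level hypothesis of the lemma can be met without leaving the prescribed hyperdegree.
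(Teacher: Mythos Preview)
Your argument is correct and follows essentially the same route as the paper: pass from a representative of the given hyperdegree to one that Turing-computes $\mathfrak{c}'$ (you do this explicitly via the join $z\oplus\mathfrak{c}'$, the paper just asserts such a $y_1\equiv_h y_0$ exists), then invoke Lemma~\ref{lemma Stephan lemma} and the observation that $\mathfrak{c}$-Kurtz randomness implies $\Pi^1_1$-Kurtz randomness. Your explicit justification that $\mathfrak{c}'\leq_h\mathfrak{c}$ keeps the join in the same hyperdegree is exactly the point the paper leaves implicit.
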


\begin{proof}
For every real $y_0\geq_h \mathfrak{c}$, there is a real
$y_1\equiv_h y_0$ so that $y_1\geq_T \mathfrak{c}'$, the Turing jump
of $\mathfrak{c}$. By Lemma \ref{lemma Stephan lemma},  there is a
real $z \equiv_T y_1$ for which $z$ is $\mathfrak{c}$-Kurtz random
and so $\Pi^1_1$-Kurtz random.
\end{proof}

\noindent
Recall that every $\Sigma^1_2$ real is constructible (see e.g.\  the last chapter  of Moschovakis \cite{Mosch1980}). In the following we will determine the position of $\mathfrak{c}$ within the constructible hierarchy.
A real is called constructible if  it belongs to some level $L_\alpha$ of G\"odel's hierarchy of constructible sets
$$
L=\bigcup\{L_\beta:\beta\text{ is an ordinal}\}.
$$
More generally, for each real $x$ we have  the hierarchy
$$
L[x]=\bigcup\{L_\beta[x]:\beta\text{ is an ordinal}\}
$$
of sets constructible from  $x$.

  Let $$\delta^1_2=\sup \{ \alpha: \alpha \mbox{ is an ordinal isomorphic to a }\Delta^1_2 \mbox{
wellordering of } \omega\},$$ and
$$\delta=\min\{\alpha\mid L\setminus L_{\alpha}\mbox{ contains no
}\Pi^1_1 \mbox{ singleton}\}.$$

\begin{proposition}[Forklore]\label{proposition delta=delta12}
$\delta=\delta^1_2$.
\end{proposition}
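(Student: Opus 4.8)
The plan is to prove the two inequalities $\delta\le\delta^1_2$ and $\delta\ge\delta^1_2$ separately. For a constructible real $x$ write $\rho(x)$ for its constructibility rank, the least $\beta$ with $x\in L_{\beta+1}$; since every $\Pi^1_1$-singleton is $\Sigma^1_2$ and hence constructible, $\delta=\sup\{\rho(x)+1: x \text{ is a } \Pi^1_1\text{-singleton}\}$. The whole argument rests on the standard (lightface) correspondence between $\Delta^1_2$ definability and the constructible hierarchy, which I will abbreviate as $(\star)$: the set $L_{\delta^1_2}\cap 2^{\omega}$ is exactly the set of lightface $\Delta^1_2$ reals, and $\delta^1_2$ is admissible (indeed a limit of admissibles). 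I would state $(\star)$ as a cited fact, since the proposition is itself labelled folklore.

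For $\delta\le\delta^1_2$ the argument is short. Let $x$ be any $\Pi^1_1$-singleton. Because $\{x\}$ is $\Pi^1_1$, the set $\{n: x(n)=1\}$ is both $\Sigma^1_2$, via $\exists y(y=x\wedge y(n)=1)$, and $\Pi^1_2$, via $\forall y(y=x\to y(n)=1)$, so $x$ is $\Delta^1_2$. By $(\star)$ we get $x\in L_{\delta^1_2}$, i.e.\ $\rho(x)<\delta^1_2$. As this holds for every $\Pi^1_1$-singleton, $L\setminus L_{\delta^1_2}$ contains no $\Pi^1_1$-singleton, so $\delta\le\delta^1_2$. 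The only content of $(\star)$ needed here is its easy half: from a $\Delta^1_2$ real $x$ of rank $\beta$, the $\Sigma^1_2$-good wellordering $<_L$ restricted to $\{y:y<_L x\}$ is a $\Delta^1_2$ wellordering of length $\ge\beta$, which after transfer to a wellordering of $\omega$ witnesses $\beta<\delta^1_2$.

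The reverse inequality $\delta\ge\delta^1_2$ is the crux. It suffices to show that the constructibility ranks of $\Pi^1_1$-singletons are cofinal in $\delta^1_2$, i.e.\ that for every $\alpha<\delta^1_2$ there is a $\Pi^1_1$-singleton of rank $\ge\alpha$. The mechanism I would use combines master codes with the fact that wellfoundedness is $\Pi^1_1$. For a suitable level $L_\beta$ (with $\beta\ge\alpha$) let $m_\beta$ be the master code of $L_\beta$; it is a specific real in $L_{\beta+1}\setminus L_\beta$, so $\rho(m_\beta)=\beta$. The statement ``$x=m_\beta$'' can be expressed by saying that $x$ codes, in the master-code format, a wellfounded model $M$ of a fixed finite fragment of $V=L$, that $M$ satisfies a fixed lightface first-order condition singling out the level $L_\beta$, and that the arithmetic clauses relating $x$ to $M$ hold. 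Wellfoundedness of the coded membership relation is $\Pi^1_1$, the remaining clauses are arithmetic, and the master code is a uniquely determined object, so $\{m_\beta\}$ is $\Pi^1_1$; thus $m_\beta$ is a $\Pi^1_1$-singleton of rank $\beta$. Letting $\beta$ range over a cofinal set of such ordinals below $\delta^1_2$ yields $\Pi^1_1$-singletons of rank cofinal in $\delta^1_2$, giving $\delta\ge\delta^1_2$.

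The hard part, which I would isolate as a lemma, is exactly the cofinality used in the last step: producing, for cofinally many $\beta<\delta^1_2$, a \emph{parameter-free} first-order condition on $L_\beta$ that makes its master code $m_\beta$ a lightface $\Pi^1_1$-singleton. This is the fine-structural input that below $\delta^1_2$ the canonical codes of (for instance the pointwise $\Sigma_1$-definable) levels of $L$ are $\Pi^1_1$-singletons, and that such levels reach up to $\delta^1_2$. I would prove it by combining $(\star)$, which guarantees that $L_{\delta^1_2}$ keeps adding $\Delta^1_2$ reals cofinally, with a Skolem-hull/collapse analysis showing that the relevant levels admit uniquely $\Pi^1_1$-codeable master codes of unboundedly large rank. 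Everything outside this lemma is bookkeeping.
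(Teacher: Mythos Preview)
Your argument for $\delta\le\delta^1_2$ is essentially the paper's: a $\Pi^1_1$-singleton $x$ is $\Delta^1_2$, and since $x\in L_{\omega_1^x}$ with $\omega_1^x$ a $\Pi^1_1(x)$ (hence $\Delta^1_2$) wellordering, its rank is below $\delta^1_2$. For the harder direction $\delta^1_2\le\delta$, however, you take a genuinely different route. The paper does \emph{not} attempt to manufacture $\Pi^1_1$-singletons of cofinal rank via master codes. Instead it shows directly that any $\Delta^1_2$ wellordering $R$ is $\Delta_1$-definable over $L_\delta$: writing $R(n,m)\Leftrightarrow\exists f\,\Pi^1_1(f,n,m)$, the Kondo--Addison uniformization lets one take the witness $f$ to be a $\Pi^1_1$-singleton, hence $f\in L_\delta$; and the Spector--Gandy theorem rewrites the inner $\Pi^1_1$ matrix as $\exists g\le_h f$ of something arithmetical, so the witness $g$ lies in $L_{\omega_1^f}[f]\subseteq L_\delta$. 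Thus $R$ is $\Sigma_1(L_\delta)$, and symmetrically $\Pi_1(L_\delta)$; since $L_\delta$ is admissible, $R\in L_\delta$ and its ordertype is below $\delta$. This argument stays entirely within the higher-recursion-theoretic toolkit already set up in the paper (Kondo--Addison, Spector--Gandy), whereas your approach imports fine-structural machinery (master codes, pointwise-definable levels, Skolem hulls) and leaves the crucial cofinality lemma as a black box. Note also that you invoke $(\star)$---that $L_{\delta^1_2}\cap 2^\omega$ equals the $\Delta^1_2$ reals---as an input, while the paper obtains this as a \emph{consequence} of the proposition by the same Kondo--Addison/Spector--Gandy computation; your citation is not circular, since $(\star)$ is independently known, but it does make your proof less self-contained than the paper's.
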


\begin{proof}
If $\alpha<\delta$, then there is a $\Pi^1_1$ singleton $x \in
L_{\delta}\setminus L_{\alpha}$. Since $x\in L_{\omega_1^x}$ and
$\omega_1^x$ is a $\Pi^1_1(x)$ wellordering, it must be that
$\alpha<\omega_1^x<\delta^1_2$. So $\delta\leq \delta^1_2$.

If $\alpha<\delta^1_2$,  there is a $\Delta^1_2$ wellordering
relation $R\subseteq \omega\times \omega$ of order type $\alpha$. So
there are two recursive relations $S, T\subseteq
(\omega^{\omega})^2\times \omega^3$ so that
$$R(n,m)\Leftrightarrow \exists f \forall g \exists k S(f,g,n,m,k),
\mbox{ and}$$
$$\neg R(n,m)\Leftrightarrow \exists f \forall g \exists k T(f,g,n,m,k).$$
Define a $\Pi^1_1$ set $R_0=\{(f,n,m)\mid \forall g \exists k
S(f,g,n,m,k)\}$. By the Gandy-Spector Theorem \ref{theorem spector and
gandy}, there is an arithmetical relation $S'$ so that
$R_0=\{(f,n,m)\mid \exists g\leq_h f (S'(f,g,n,m))\}$. Recall that
every nonempty $\Pi^1_1$ set contains a $\Pi^1_1$-singleton (Kondo-Addison \cite{Sacks90}). Then
$$R(n,m)\Leftrightarrow \exists f\in L_{\delta} \exists g \in
L_{\omega_1^f}[f] (S'(f,g,n,m)).$$
In other words, $R$ is $\Sigma_1$-definable over $L_{\delta}$. By
the same method, the complement of $ R$ is $\Sigma_1$-definable over $L_{\delta}$
too. So $R$ is $\Delta_1$-definable over $L_{\delta}$. It is clear
that $L_{\delta}$ is admissible. So $R\in L_{\delta}$. Hence
$\alpha<\delta$. Thus $\delta^1_2=\delta$.
\end{proof}

\noindent

Note that if $x$ is a $\Delta^1_2$-real, then $\omega_1^x$ is
isomorphic to a $\Delta^1_2$ wellordering of $\omega$. So
$$\sup\{\omega_1^x\mid x \mbox{ is a
}\Pi^1_1\mbox{-singleton}\}\leq \delta^1_2.$$ \noindent Since $x\in
L_{\omega_1^x}$ for every $\Pi^1_1$-singleton $x$,
$$\sup\{\omega_1^x\mid x \mbox{ is a
}\Pi^1_1\mbox{-singleton}\}\geq \delta=\delta^1_2.$$\noindent Thus
$$\sup\{\omega_1^x\mid x \mbox{ is a }\Pi^1_1\mbox{-singleton}\}=
\delta=\delta^1_2.$$

\noindent Since every $\Pi^1_1$ singleton is recursive in
$\mathfrak{c}$, we have $\mathfrak{c}\not\in L_{\delta^1_2}$ and
$\omega_1^{\mathfrak{c}}\geq \delta^1_2$.

By the same argument as in Proposition \ref{proposition
delta=delta12}, the reals lying in $L_{\delta^1_2}$ are exactly the
$\Delta^1_2$ reals. So $\mathfrak{c}$ is not $\Delta^1_2$. Moreover,
since $\mathfrak{c}$ is $\Sigma^1_2$, it is $\Sigma_1$ definable
over $L_{\delta^1_2}$. Hence $\mathfrak{c} \in L_{\delta^1_2+1}$. In
other words, for any real $z$, if
$\omega_1^z>\omega_1^{\mathfrak{c}}$, then $\mathfrak{c}\in
L_{\omega_1^z}$ and so $\mathfrak{c}\leq_h z$. Then by \cite{Sa76},
$\mathfrak{c} \in L_{\omega_1^{\mathfrak{c}}}$. Thus
$\omega_1^{\mathfrak{c}}>\delta^1_2$. Since actually all
$\Sigma^1_2$ reals lie in $L_{\delta^1_2+1}$. This means that
$$\mathfrak{c}\mbox{ has the largest hyperdegree among all
}\Sigma^1_2 \mbox{ reals}.$$

\section{$\Delta^1_1$-traceability and dominability}

\noindent
We begin with  the characterization of $\QI$-randomness within
$\DI$-Kurtz randomness.

\begin{definition}
A real $x$ is hyp-dominated if for all functions $f:\omega
\to \omega$ with $f\leq_h x$, there is a hyperarithmetic function
$g$ so that $g(n)>f(n)$ for all $n$.
\end{definition}

\noindent
Recall that a real is $\Pi^1_1$-random if it does not belong to any
$\Pi^1_1$-null set. The following result is a higher  analog of  the
result that Kurtz randomness coincides with  weak 2-randomness  for
reals of  hyperimmune-free degree.

\begin{proposition}\label{proposition: delta11 same as pi11 if  dominability}
A real $x$ is $\Pi^1_1$-random if and only if $x$ is
hyp-dominated and $\Delta^1_1$-Kurtz random.\end{proposition}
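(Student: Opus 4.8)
The plan is to prove both directions separately. The forward direction is the easier one: if $x$ is $\Pi^1_1$-random, then $x$ is in particular $\Delta^1_1$-Kurtz random, since every closed null $\Delta^1_1$ set is a $\Pi^1_1$ null set. For hyp-domination, suppose toward a contradiction that some $f \leq_h x$ grows faster than every hyperarithmetic function. Then the set of reals coding (via some fixed coding of functions) a function dominating all hyperarithmetic functions should be arrangeable as a $\Pi^1_1$ null set containing $x$, contradicting $\Pi^1_1$-randomness. More carefully, I would first note that by Theorem \ref{theorem x h>o iff x computes longer} the collection $\{x \mid \omega_1^x > \CK\}$ is a $\Pi^1_1$ null set, so I may assume $\omega_1^x = \CK$. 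Under that assumption, $f \leq_h x$ means $f$ is computed from $x$ in the hyperarithmetic hierarchy, and the failure of hyp-domination can be captured by a $\Pi^1_1$ test.

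The reverse direction is the substantive one. Assume $x$ is hyp-dominated and $\Delta^1_1$-Kurtz random, and suppose for contradiction that $x$ lies in a $\Pi^1_1$ null set $P$. First I would again dispose of the case $\omega_1^x > \CK$ using the $\Pi^1_1$ null set from Theorem \ref{theorem x h>o iff x computes longer}, so assume $\omega_1^x = \CK$. The strategy is to shrink $P$ to a $\Delta^1_1$-Kurtz test that still captures $x$, using hyp-domination to control the ordinal bookkeeping. By Theorem \ref{theorem ramified sg thm}, $P$ is defined by a $\Sigma^1_1$ sentence of the ramified language, so at $x$ it is witnessed at some recursive ordinal stage: $x \in \{y \mid \mathfrak{A}(\CK,y)\models \varphi\}$ for a ranked $\varphi$ of some rank $\alpha < \CK$. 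The plan is to use Theorem \ref{theorem approximating delta11 set} to cover each such ranked null set by a $\Delta^1_1$ open set of small measure, and to use hyp-domination of $x$ to bound the recursive ordinal (equivalently the notation) needed to witness membership, thereby assembling a genuine $\Delta^1_1$ closed null set containing $x$.

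Concretely, I would express membership of $x$ in $P$ as an existential over recursive ordinals $\alpha$ and over approximation parameters, and read off from hyp-domination a single hyperarithmetic bound on the $\KO$-notation required. The key point is that the function sending an input to the least notation witnessing the relevant approximation is $\leq_h x$ (since $x$ witnesses membership and $\omega_1^x = \CK$), so it is dominated by a hyperarithmetic function $g$; replacing the witnessing notations by the $g$-bounded ones keeps $x$ inside and makes the resulting set $\Delta^1_1$. This converts the $\Pi^1_1$ null set into a $\Delta^1_1$ closed null set containing $x$, contradicting $\Delta^1_1$-Kurtz randomness.

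I expect the main obstacle to be the reverse direction's bookkeeping: precisely extracting, from membership of $x$ in the $\Pi^1_1$ set $P$, a function $\leq_h x$ whose domination by a hyperarithmetic function actually yields a $\Delta^1_1$ test. The subtlety is that a $\Pi^1_1$ null set need not be closed, so one must first pass to its closure (as in the proof of Proposition \ref{proposition: delta11 vs pi11 at ck}), verify the closure is still null (this uses that the original set is null and that taking closures of the relevant ranked approximations does not increase measure uncontrollably, via Theorem \ref{theorem approximating delta11 set}), and only then apply domination to collapse the ordinal parameter into the $\Delta^1_1$ realm. Getting the interplay between ``closed,'' ``null,'' and ``$\Delta^1_1$'' right, while staying inside the hyp-domination budget, is where the real work lies.
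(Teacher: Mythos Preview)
Your overall strategy for the reverse direction is on the right track, but two concrete errors would derail it as written. First, your justification for $\omega_1^x=\CK$ is circular: you cannot ``dispose of the case $\omega_1^x>\CK$ using the $\Pi^1_1$ null set from Theorem~\ref{theorem x h>o iff x computes longer}'', because in this direction you are \emph{trying to prove} $x$ avoids all $\Pi^1_1$ null sets. The correct reason (and the one the paper uses, citing \cite{CNY07}) is that hyp-domination alone already forces $\omega_1^x=\CK$: if $\omega_1^x>\CK$ then $x\geq_h\KO$, and $\KO$ hyper-computes a function dominating every $\DI$ function. Second, your plan to ``pass to the closure'' of the ranked approximation $B_\alpha$ and argue it remains null does not work and is not what Proposition~\ref{proposition: delta11 vs pi11 at ck} does: there the closure was taken inside an ambient \emph{closed} null set, whereas a general $\DI$ null set can be dense, so its closure can have full measure. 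Theorem~\ref{theorem approximating delta11 set} gives you small open \emph{covers} of $B_\alpha$, not control over its closure.

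What does work, and what the paper does, is to manufacture the ``nested small open sets'' structure first and only then use hyp-domination to truncate to clopen sets. The paper takes the cleanest route: rather than starting from an arbitrary $\QI$ null set, it assumes $x$ fails the universal $\QI$-ML-test $\{U_n\}$. Using $\omega_1^x=\CK$, the $\KO$-notations witnessing $x\in U_n$ are bounded by a single $m_0\in\KO$, yielding a $\DI$-ML-test $\{\hat U_n\}$ still containing $x$. Only now does hyp-domination enter, and its role is not to bound ordinals but to bound \emph{string lengths}: it dominates $n\mapsto\min\{l:\exists\sigma\in 2^l(\sigma\in\hat U_n\wedge\sigma\prec x)\}$, so the truncated clopen sets $V_n$ still catch $x$, and $\bigcap_n V_n$ is a $\DI$ closed null set. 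This shows $x$ is $\QI$-ML-random; the final step to full $\QI$-randomness is a separate citation to \cite{CNY07} (via $\omega_1^x=\CK$ again). Your direct approach through Theorem~\ref{theorem approximating delta11 set} can be made to work along the same lines---cover $B_\alpha$ by $\DI$ open sets $O_p$ of measure $<1/p$, then dominate the prefix-length function to truncate---and it has the virtue of avoiding the detour through $\QI$-ML-randomness, but you should drop the closure idea and be explicit that hyp-domination is used for prefix lengths, not for the ordinal bound.
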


\begin{proof}
Every $\Pi^1_1$-random real is $\Delta^1_1$-Kurtz random and also
hyp-dominated (see \cite{CNY07}). We prove the other direction.

Suppose $x$ is hyp-dominated and $\Delta^1_1$-Kurtz random. We show
that $x$ is $\Pi^1_1$-Martin-L\" of random. If not, then fix a
universal $\Pi^1_1$-Martin-L\" of test $\{U_n\}_{n\in \omega}$ (see
\cite{HjNi}). Then there is a recursive function $f:\omega\times
2^{<\omega}\to \omega$ so that for any pair $(n,\sigma)$, $\sigma\in
U_n$ if and only if $f(n,\sigma)\in \KO$. Since $x$ is
hyp-dominated, $\omega_1^x=\CK$ (see \cite{CNY07}). Then we define a
$\Pi^1_1(x)$ relation $R\subseteq \omega\times \omega$ so that
$R(n,m)$ if and only if there is a $\sigma$ so that $m\in \KO$,
$f(n,\sigma)\in \KO_m=\{i\in \KO\mid |i|<|m|\}$ and $\sigma\prec x$.
Then by the $\Pi^1_1$-uniformization relativized to $x$, there is a
partial function $p$ uniformizing $R$. Since $x\in \bigcap_n U_n$,
$p$ is a total function. Since $\omega_1^x=\CK$, there must be some
$m_0\in \KO$ so that $p(n)\in \KO_{m_0}$ for every $n$. Then define
a $\Delta^1_1$-Martin-L\" of test $\{\hat{U}_n\}_{n\in \omega}$ so
that $\sigma\in \hat{U}_n$ if and only if $f(n,\sigma)\in
\KO_{m_0}$. So $x \in \bigcap_n \hat{U_n}$. Let
$\hat{f}(n)=\min\{l\mid \exists \sigma \in 2^{l}(\sigma\in \hat{U}_n
\wedge x \in [\sigma])\}$ be a $\Delta^1_1(x)$ function. Then there
is a $\Delta^1_1$ function $f$ dominating $\hat{f}$. Define
$V_n=\{\sigma\mid \sigma \in 2^{\leq f(n)} \wedge \sigma \in
\hat{U}_n\}$ for every $n$. Then $P=\bigcap_n V_n$ is a $\Delta^1_1$
closed set and $x \in P$. So $x$ is not $\Delta^1_1$-Kurtz random, a
contradiction.

Since is   $\Pi^1_1$-Martin-L\" of random and $\omega_1^x=\CK$,
$x$ is already $\Pi^1_1$-random (see \cite{CNY07}).   \end{proof}

\noindent
Next we proceed to traceability.

\begin{definition}  \begin{itemize}  \item[(i)] Let $h: \omega
\rightarrow \omega$ be a nondecreasing unbounded
function that is hyperarithmetical.  A $\Delta^1_1$ trace with bound
$h$ is a uniformly   $\Delta^1_1$ sequence $(T_e)_{e\in
\omega}$ such that $|T_e| \le h(e)$ for each $e$.

\item[(ii)] $x \in 2^{\omega}$ is $\Delta^1_1$-traceable
\cite{CNY07}   if there is $h \in \Delta^1_1$ such that, for each
$f \le_h x$, there is a $\Delta^1_1$ trace with bound $h$ such
that, for each $e$, $f(e) \in T_e$.

\item[(iii)]  $x \in  2^{\omega}$ is $\Delta^1_1$-semi-traceable  if for each
$f \le_h x$, there is a $\Delta^1_1$ function $g$ so that, for
infinitely many $n$, $f(n)=g(n)$. We say that $g$ semi-traces $f$.

\item[(iv)] $x \in  2^{\omega}$ is $\Pi^1_1$-semi-traceable  if for each
$f \le_h x$, there is a partial $\Pi^1_1$ function $p$ so that, for
infinitely many $n$ we have $f(n)=p(n)$.
\end{itemize}
\end{definition}

\noindent
Note that, if  $(T_e)_{e \in \omega}$ is a uniformly $\Delta^1_1$
sequence of finite sets, then there is $g \in \Delta^1_1$ such that
for each $e$, $D_{g(e)} = T_e$ (where $D_n$  is the $n$th finite set
according to some recursive ordering). Thus
$$g(e)= \mu n \, \forall u \, [u \in D_n \leftrightarrow u \in T_e
].$$ In this formulation, the definition of $\Delta^1_1$
traceability is very close to that of
 recursive traceability.

Also  notice  that the choice of a  bound as a witness  for
traceability is immaterial:

\begin{proposition}[As in Terwijn and Zambella \cite{TZ}] \label{tracebound}
Let $A$ be a real that is $\DI$  traceable with bound $h$. Then $A$ is $\DI$
traceable with bound $h'$ for
any monotone and unbounded $\DI$  function $h'$.
\end{proposition}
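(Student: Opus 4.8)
The plan is to adapt the standard trace-contraction/padding argument of Terwijn and Zambella to the $\DI$ setting, using the machinery of ranked formulas and $\mathfrak{A}(\CK,\cdot)$ to ensure all constructions stay $\DI$. The key observation is that trading one monotone unbounded bound for another is a purely combinatorial manipulation: if $h'$ grows \emph{slower} than $h$ we must \emph{merge} blocks of consecutive traces into single larger traces, and if $h'$ grows \emph{faster} we may simply use $h$-sized traces, since a trace with bound $h$ is automatically a trace with any larger bound.

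The main case is therefore when $h'$ is much smaller than $h$. First I would observe that since both $h$ and $h'$ are $\DI$, monotone, and unbounded, there is a $\DI$ strictly increasing sequence of thresholds: I would define a $\DI$ function $k(e)$ picking out a block $[k(e),k(e+1))$ of inputs such that the product $\prod_{i \in [k(e),k(e+1))} h(i)$ is at most $h'(e)$; such a $k$ exists and is $\DI$ because $h'$ is unbounded and the comparison $\prod h(i) \le h'(e)$ is a $\DI$ predicate (being arithmetical in the $\DI$ functions $h,h'$). Given any $f \leq_h x$ and a $\DI$ trace $(T_i)_{i\in\omega}$ with bound $h$ for $f$, I would define the merged trace
$$
S_e = \left\{\, \langle a_{k(e)}, a_{k(e)+1}, \dots, a_{k(e+1)-1}\rangle \;\middle|\; a_i \in T_i \text{ for each } i \in [k(e),k(e+1))\,\right\},
$$
coding each tuple by a single natural number via the pairing function. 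Then $|S_e| \le \prod_{i\in[k(e),k(e+1))} |T_i| \le \prod h(i) \le h'(e)$, so $(S_e)$ has bound $h'$, and the true value $\langle f(k(e)),\dots,f(k(e+1)-1)\rangle$ lies in $S_e$ since each $f(i)\in T_i$.

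Next I would check uniform $\DI$-ness of $(S_e)$. Because $(T_i)$ is uniformly $\DI$, membership in the product set is a finite conjunction of $\DI$ conditions indexed uniformly by $e$, and the threshold function $k$ is $\DI$; hence $(S_e)_{e\in\omega}$ is uniformly $\DI$. Finally, to conclude $\DI$-traceability of $x$ with respect to \emph{arbitrary} $g \leq_h x$, I would note that tracing the block-coded function $e \mapsto \langle f(k(e)),\dots,f(k(e+1)-1)\rangle$ is equivalent to tracing $f$ itself, since from a trace of the coded function one recovers, by a $\DI$ decoding, a trace of $f$ with bound $h$, and conversely; the definition requires only that \emph{some} bound work, and I have now exhibited one, namely $h'$.

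The step I expect to be the main obstacle is verifying that $k$ is genuinely $\DI$ and that the merged sequence remains \emph{uniformly} $\DI$ rather than merely $\DI$ for each fixed $e$. The delicate point is that taking products of the sizes $|T_i|$ and comparing against $h'(e)$ must be carried out within a single $\DI$ (equivalently, ranked-formula) definition; here I would lean on the fact, recorded after Theorem \ref{theorem ramified sg thm}, that $\DI$ sets correspond exactly to ranked formulas over $\mathfrak{A}(\CK,x)$, and that $L_{\CK}$ is admissible, so the requisite recursions (bounded products, the least-threshold search defining $k$) can be performed inside the admissible structure and hence remain $\DI$.
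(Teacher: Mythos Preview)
Your merging runs in the wrong direction, and the threshold function $k$ you describe cannot exist in the case you call the main one. You require a strictly increasing $k$ with $\prod_{i\in[k(e),k(e+1))} h(i)\le h'(e)$. But any strictly increasing $k:\omega\to\omega$ satisfies $k(e)\ge e$, and since each block $[k(e),k(e+1))$ is nonempty and $h$ is nondecreasing, the product is already at least $h(k(e))\ge h(e)$. In the regime $h'(e)<h(e)$ the inequality fails outright (take, e.g., $h(e)=2^{e}$ and any slowly growing unbounded $h'$). Unboundedness of $h'$ does not help here: you are trying to bound $h'(e)$ from \emph{below} by a quantity that is at least $h(e)$. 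Consequently your $(S_e)$, being a product of the $T_i$, has size $\prod|T_i|$, which is generally far larger than $h'(e)$, not smaller.

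The Terwijn--Zambella argument (which the paper simply cites without spelling out) merges \emph{values of $f$}, not the traces. One picks $k$ strictly increasing with $h'(k(e))\ge h(e)$, which is exactly where unboundedness of $h'$ is used. Given $f\leq_h A$, set $g(e)=\langle f(k(e)),\dots,f(k(e+1)-1)\rangle$; then $g\leq_h A$, so by the hypothesis applied to $g$ there is a $\DI$ trace $(S_e)$ with $|S_e|\le h(e)$ and $g(e)\in S_e$. Now \emph{unpack}: for $i\in[k(e),k(e+1))$ let $T_i$ be the set of $(i-k(e))$-th coordinates of members of $S_e$. Then $f(i)\in T_i$ and $|T_i|\le|S_e|\le h(e)\le h'(k(e))\le h'(i)$ by monotonicity of $h'$. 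Your remarks about ranked formulas and the admissibility of $L_{\CK}$ are appropriate for checking that $k$ and the resulting $(T_i)$ are uniformly $\DI$; they just need to be attached to this corrected construction.
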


\begin{lemma}\label{lemma semi pi11 trace=delta11 trace in dominated}
$x$ is $\Pi^1_1$-semi-traceable if and only if $x$ is
$\Delta^1_1$-semi-traceable.
\end{lemma}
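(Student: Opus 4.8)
The plan is to prove the nontrivial direction, that $\Pi^1_1$-semi-traceability implies $\Delta^1_1$-semi-traceability; the converse is immediate, since a total $\Delta^1_1$ function is in particular a partial $\Pi^1_1$ function. So fix $f\leq_h x$ together with a partial $\Pi^1_1$ function $p$ with $p(n)=f(n)$ for infinitely many $n$. I would first record that $\omega_1^x=\CK$: otherwise $\omega_1^x>\CK$, so $\KO\leq_h x$ by Theorem \ref{theorem x h>o iff x computes longer}, and one can then build a single $\tilde f\leq_h\KO\leq_h x$ that diagonalizes against a $\KO$-effective enumeration $(p_e)_e$ of the partial $\Pi^1_1$ functions, choosing $\tilde f(n)$ at input $n$ to differ from each of the finitely many values $p_e(n)$, $e\leq n$, that converge. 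Then $\tilde f$ agrees with every partial $\Pi^1_1$ function only finitely often, contradicting $\Pi^1_1$-semi-traceability.

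Next, since the graph of $p$ is $\Pi^1_1$, fix a recursive $e$ with $p(n)=m\Leftrightarrow e(n,m)\in\KO$, and for $a\in\KO$ put $T_a=\{n\mid e(n,f(n))\in\KO_a\}$. Because $f\leq_h x$ and $\KO_a$ is $\Delta^1_1$ uniformly in $a\in\KO$, the $T_a$ are uniformly $\Delta^1_1(x)$; they increase with $|a|$, and their union is $S=\{n\mid p(n)=f(n)\}$, which is infinite. The naive hope is to find a single $a$ with $T_a$ infinite and then read a $\Delta^1_1$ function off the single-valued $\Delta^1_1$ graph $\{(n,m)\mid e(n,m)\in\KO_a\}$. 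The obstacle, and the crux of the argument, is that a priori each $T_a$ could be finite while the ranks of the agreements run cofinally through $\CK$.

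To overcome this I would convert the infinitely-often condition into a totality statement amenable to boundedness, in the style of Proposition \ref{proposition: delta11 same as pi11 if  dominability}. Consider the $\Pi^1_1(x)$ relation $R(k,a)$ asserting $a\in\KO$ and $|T_a|\geq k$; since $S$ is infinite and the $T_a$ are increasing, for every $k$ there is some $a$ with $R(k,a)$. By $\Pi^1_1$-uniformization relativized to $x$ there is a total function $u$ with $\Pi^1_1(x)$ graph satisfying $R(k,u(k))$ for all $k$. A total function with $\Pi^1_1(x)$ graph is $\Delta^1_1(x)$, so its range is a $\Delta^1_1(x)$ subset of $\KO$; by boundedness together with $\omega_1^x=\CK$ this range is bounded, say by some $a^*\in\KO$ with $|a^*|\geq|u(k)|$ for all $k$. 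Then $T_{a^*}\supseteq\bigcup_k T_{u(k)}$ has at least $k$ elements for every $k$, hence is infinite.

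Finally I would extract the trace. The graph $G^*=\{(n,m)\mid e(n,m)\in\KO_{a^*}\}$ is $\Delta^1_1$ and single-valued, being a subset of the graph of $p$, and its domain $\{n\mid\exists m\,(n,m)\in G^*\}$ is $\Delta^1_1$ by closure under number quantifiers; so setting $g(n)$ to be the unique such $m$ when $n$ lies in the domain and $g(n)=0$ otherwise yields a total $\Delta^1_1$ function. For each $n\in T_{a^*}$ we have $(n,f(n))\in G^*$, whence $g(n)=f(n)$, and since $T_{a^*}$ is infinite, $g$ semi-traces $f$. The step I expect to demand the most care is precisely the passage over the apparent ``unbounded rank'' possibility: the payoff of uniformizing the total relation $R$ and bounding the resulting $\Delta^1_1(x)$ set of notations is that it collects all the agreements below a single level $a^*$, which is exactly what the bad case claimed to be impossible.
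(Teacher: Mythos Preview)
Your argument is correct and follows essentially the same route as the paper: first show $\omega_1^x=\CK$ by diagonalizing (using $\KO$) against all partial $\Pi^1_1$ functions, then use $\Pi^1_1(x)$-uniformization to produce a total $\Delta^1_1(x)$ function whose range is a subset of $\KO$, apply boundedness (via $\omega_1^x=\CK$) to land below a single notation, and read off a $\Delta^1_1$ semi-trace from that level. The only cosmetic difference is in how the uniformization is packaged: the paper uniformizes twice (first to find an interval containing the next agreement, then to pick an agreement point in each interval) and collects the resulting notations into a $\Delta^1_1(x)$ set $H\subseteq\KO$, whereas you uniformize once on the relation ``$a\in\KO$ and $|T_a|\geq k$'' and bound the range directly; your version is slightly more streamlined but the mechanism is identical.
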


\begin{proof}
It is not difficult to see that if $x$ is $\Pi^1_1$-semi-traceable,
then $\omega_1^x=\CK$. For otherwise, $x \geq_h \KO$. So it suffices
to show that $\KO$ is not $\Pi^1_1$-semi-traceable. Let
$\{\phi_i\}_{i\in\omega}$ be an effective enumeration of partial
recursive functions. Define a function $g\leq_T \KO'$ so that
$g(i)=\sum_{j\leq i}m^i_j+1$ where $m^i_j$ is the least number $k$
so that $p_j(i,k)\in \KO$; if there is no such $k$, then $m^i_j=0$.
Note that for any $\Pi^1_1$ partial function $p$, there must be some
partial recursive function $p_j$ so that for every pair $n, m$,
$p(n)=m$ if and only if $p_j(n,m)\in \KO$. Then by the definition of
$g$, for any $i>j$, $g(k)\neq p(i)$. So $g$ cannot be traced by $p$.

Suppose that $x$ is $\Pi^1_1$-semi-traceable, $\omega_1^x=\CK$,
and $f\leq_h x$. Fix a $\Pi^1_1$ partial function $p$ for $f$. Since
$p$ is a $\Pi^1_1$ function, there must be some recursive injection
$h$ so that $p(n)=m\Leftrightarrow h(n,m)\in \KO$.

Let $R(n,m)$ be a $\Pi^1_1(x)$ relation so that $R(n,m)$ iff there
exists $m>k\geq n$ for which $f(k)=p(k)$. Then some total  function $g$
uniformizes $R$ such that $g$   is
$\Pi^1_1(x)$, and so $\Delta^1_1(x)$. Thus,  for every $n$, there is some $m\in [g(n),
g(g(n)))$ so that $f(m)=p(m)$. Let $g'(0)=g(0)$, and
$g'(n+1)=g(g'(n))$ for all $n\in \omega$. Define a $\Pi^1_1(x)$
relation $S(n,m)$ so that $S(n,m)$ if and only if $m\in
[g'(n),g'(n+1))$ and $p(m)=f(m)$. Uniformizing $S$ we obtain a
$\Delta^1_1(x)$ function $g''$.

Define a $\Delta^1_1(x)$ set by $H=\{h(m,k)\mid \exists
n(g''(n)=m \wedge f(m)=k)\}$. Since $\omega_1^x=\CK$, $H\subseteq
\KO_n$ for some $n\in \KO$. Since $\KO_n$ is a $\Delta^1_1$ set, we
can define a $\Delta^1_1$ function $\hat{f}$ by: $\hat{f}(i)=j$
if $h(i,j)\in \KO_n$; $\hat{f}(i)=1$, otherwise. Then there are
infinitely many $i$ so that $f(i)=\hat{f}(i)$.
\end{proof}

\noindent

\noindent
Note that the $\Delta^1_1$-dominated reals form a measure 1 set
\cite{CNY07} but the set of $\Delta^1_1$-semi-traceable reals  is
null.
Chong, Nies and Yu \cite{CNY07} constructed a non-hyperarithmetic
$\Delta^1_1$-traceable real.

\begin{proposition}\label{proposition trace implies dom and
semitrace} Every $\Delta^1_1$-traceable real is
$\Delta^1_1$-dominated and $\Delta^1_1$-semi-traceable.
\end{proposition}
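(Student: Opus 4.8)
The plan is to read off both conclusions from a single trace: domination comes essentially for free from the finiteness of the trace sets, while semi-traceability needs a small combinatorial trick, namely tracing \emph{blocks} of values of $f$ rather than single values.

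First I would fix the $\DI$ bound $h$ witnessing that $x$ is $\DI$-traceable; by Proposition~\ref{tracebound} I may assume $h$ is strictly increasing with $h(e)\ge 1$ for all $e$. Given $f\le_h x$, choose a $\DI$ trace $(T_e)_{e\in\omega}$ with bound $h$ such that $f(e)\in T_e$ for every $e$. For domination I would simply put $g(e)=1+\max T_e$. Since $(T_e)$ is uniformly $\DI$ with each $T_e$ finite, the map $e\mapsto\max T_e$ is $\DI$, so $g$ is $\DI$; and $g(e)>f(e)$ because $f(e)\in T_e$. This already shows $x$ is $\DI$-dominated.

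For semi-traceability I would partition $\omega$ into consecutive finite intervals $I_0<I_1<\cdots$, chosen $\DI$-uniformly so that $|I_k|\ge h(k)$ (possible since $h$ is $\DI$ and unbounded). Let $F(k)=\langle f(i):i\in I_k\rangle$, coded as a single number; then $F\le_h x$, so by traceability there is a $\DI$ trace $(T_k)_{k\in\omega}$ with bound $h$ and $F(k)\in T_k$. For each $k$ I decode the at most $h(k)\le|I_k|$ elements of $T_k$ as candidate tuples $c^k_1,\dots,c^k_{r_k}$ over $I_k$ (discarding malformed codes), assign to them pairwise distinct positions $i^k_1,\dots,i^k_{r_k}\in I_k$---there are enough, since $r_k\le|I_k|$---and set $g(i^k_j)=c^k_j(i^k_j)$, with $g=0$ on the remaining positions.

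The function $g$ is $\DI$ because the intervals, the trace, and the position assignment are all $\DI$-uniform. Because $F(k)\in T_k$, the true block $f\uh I_k$ occurs as some $c^k_{j_0}$, and then $g$ agrees with $f$ at the position $i^k_{j_0}\in I_k$; since this holds for every $k$, we get $f(n)=g(n)$ for infinitely many $n$, as required. The one delicate point---and the step I expect to be the main obstacle---is this combinatorial guarantee that a single $g$ meets every candidate, and in particular the correct one, somewhere inside its block; it is precisely to make room for pairwise distinct witnessing positions that the intervals are arranged to satisfy $|I_k|\ge h(k)$.
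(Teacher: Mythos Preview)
Your proposal is correct and follows essentially the same approach as the paper. The paper's proof uses the specific dyadic intervals $[2^n,2^{n+1})$ with trace bound $h(n)=n$ and sets $\hat f(m)$ to be the $(m-2^n)$-th coordinate of the $(m-2^n)$-th element of $T_n$; your version with general intervals $I_k$ satisfying $|I_k|\ge h(k)$ and an arbitrary injective assignment of positions is just a mild generalization of the same pigeonhole idea.
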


\begin{proof}
Obviously every $\Delta^1_1$-traceable real is
$\Delta^1_1$-dominated.

Suppose we are given a $\Delta^1_1$-traceable real $x$ and $\Delta^1_1(x)$ function
$f$. Let $g(n)=\langle f(2^{n}), f(2^{n}+2), \linebreak[3] \ldots,
f(2^{n+1}-1)\rangle$ for all $n \in \omega$. Then there is a
$\Delta^1_1$ trace $T$ for $g$ so that $|T_n|\leq n$ for all $n$.

Then for all $2^{n}+1\leq m\leq 2^{n+1}$, let $\hat{f}(m)=$ the
$(m-2^n)$-th entry  of the tuple of the $(m-2^n)$-th element of
$T_n$ if there exists such an $m$; otherwise, let $\hat{f}(m)=1$. It
is not difficult to see that  for every $n$  there is at least one
$m\in [2^n,2^{n+1})$ so that $f(m)=\hat{f}(m)$.
\end{proof}

\noindent
From the proof above, one can see the following corollary.

\begin{corollary}\label{corollary: characterizaton fo delta11 traceable}
A real $x$ is $\Delta^1_1$-traceable if and only if for every
$x$-hyperarithmetic $\hat{f}$, there is a hyperarithmetic function $f$
so that for every $n$, there is some $m\in [2^n, 2^{n+1})$ so that
$f(m)=\hat{f}(m)$.
\end{corollary}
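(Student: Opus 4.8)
The plan is to prove both implications. The forward direction is already implicit in the proof of Proposition~\ref{proposition trace implies dom and semitrace}, with the roles of $f$ and $\hat f$ interchanged, so I would recall that construction briefly and then supply the converse, which is the substantive new content.

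For the forward direction, suppose $x$ is $\Delta^1_1$-traceable and $\hat f\le_h x$. I would pack the values of $\hat f$ over each dyadic interval, setting $g(n)=\langle \hat f(2^n),\dots,\hat f(2^{n+1}-1)\rangle$, so that $g\le_h x$. Traceability with the bound $h'(n)=n+1$, which is legitimate by Proposition~\ref{tracebound}, yields a $\Delta^1_1$ trace $(T_n)$ with $g(n)\in T_n$ and $|T_n|\le n+1$. The required hyperarithmetic $f$ is then the diagonal read-off of the proposition: for $m\in[2^n,2^{n+1})$ let $f(m)$ be the $(m-2^n)$-th coordinate of the $(m-2^n)$-th element of $T_n$, and $f(m)=1$ if there is no such element. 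Since $g(n)$ is the $j$-th element of $T_n$ for some $j\le n+1<2^n$ (for $n\ge 2$), at $m=2^n+j$ one reads off $f(m)=\hat f(m)$, giving an agreement in $[2^n,2^{n+1})$; the finitely many small $n$ are handled directly.

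For the converse, assume the interval-agreement property and let $F\le_h x$ be the function to be traced. The key step is to pack so that a single agreement recovers a whole block of values: define $G\le_h x$ to be constant on each dyadic interval, $G(m)=v_n:=\langle F(2^n),\dots,F(2^{n+1}-1)\rangle$ for $m\in I_n:=[2^n,2^{n+1})$. Applying the hypothesis to $G$ produces a hyperarithmetic $g$ agreeing with $G$ at some point of each $I_n$, so the uniformly $\Delta^1_1$ finite set $S_n=\{g(m)\mid m\in I_n\}$ contains $v_n$ and satisfies $|S_n|\le|I_n|=2^n$. Writing $\pi_i$ for the total recursive $i$-th coordinate projection, for $j\in I_n$ I would set $T_j=\{\pi_{j-2^n}(s)\mid s\in S_n\}$. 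Because $v_n\in S_n$ and $\pi_{j-2^n}(v_n)=F(j)$, we get $F(j)\in T_j$, and $(T_j)_j$ is uniformly $\Delta^1_1$ in $g$.

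Finally I would verify the bound: for $j\in I_n$ we have $j\ge 2^n\ge|S_n|\ge|T_j|$, so $(T_j)$ is a $\Delta^1_1$ trace for $F$ with the nondecreasing unbounded bound $h(j)=j$; the single index $j=0$ (uncovered by the $I_n$) is absorbed by hardcoding the finite set $T_0=\{F(0)\}$. Hence $x$ is $\Delta^1_1$-traceable, the choice of bound being immaterial by Proposition~\ref{tracebound}. I expect the only delicate points to be the size accounting and the uniformity of $(T_j)$ in $g$; the genuinely clever move is the constant-on-intervals packing, which turns one agreement per interval into a trace whose size on $I_n$ is at most $2^n\le j$.
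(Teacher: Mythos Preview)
Your proposal is correct and is exactly the argument the paper has in mind: the paper gives no separate proof of the corollary, merely remarking that it follows from the proof of Proposition~\ref{proposition trace implies dom and semitrace}, and your forward direction reproduces that proof while your converse supplies the natural reverse packing (make $G$ constant on each dyadic block so that a single hit recovers the whole tuple), which is the standard move the paper uses elsewhere.

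One cosmetic point: with the bound $h(j)=j$ you cannot hardcode $T_0=\{F(0)\}$, since that forces $|T_0|=1>0=h(0)$; invoking Proposition~\ref{tracebound} at that stage is slightly circular because you have not yet exhibited traceability with any bound. Simply take $h(j)=j+1$ from the start (your estimate $|T_j|\le 2^n\le j$ for $j\in[2^n,2^{n+1})$ still gives $|T_j|\le j+1$, and $|T_0|=1\le 1$), and the argument goes through without appeal to Proposition~\ref{tracebound}.
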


\noindent
The following proposition will be used in Theorem \ref{theorem separating traceable and semi-traceable} to disprove the converse of Proposition \ref{proposition trace implies dom and semitrace}.

\begin{proposition}\label{proposition: Charaterization of semi+dominated}
For any real $x$, the following are equivalent.
 \begin{enumerate}
\item $x$ is $\Delta^1_1$-semi-traceable and $\Delta^1_1$-dominated.
\item For every function $g \leq_h x$, there exist an increasing
$\Delta^1_1$ function $f$ and a $\Delta^1_1$ function $F:\omega\to
[\omega]^{<\omega}$  with $|F(n)|\leq n$ so that for every $n$, there
exists some $m\in [f(n), f(n+1))$ with $g(m)\in F(m)$.
\end{enumerate}
\end{proposition}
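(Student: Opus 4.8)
The plan is to prove both implications, using $\Delta^1_1$-domination to manufacture the interval function $f$ in the forward direction and to tame an auxiliary function in the reverse direction, while the conversion of an interval-bounded trace into a single $\Delta^1_1$ function is the delicate step.

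For $(1)\Rightarrow(2)$, fix $g\leq_h x$. By $\Delta^1_1$-semi-traceability choose a $\Delta^1_1$ function $\hat g$ with $\hat g(m)=g(m)$ for infinitely many $m$, and let $S=\{m\geq 1\mid \hat g(m)=g(m)\}$, an infinite $\Delta^1_1(x)$ set. Its principal enumeration function $p_S$ satisfies $p_S\leq_h x$, so by $\Delta^1_1$-domination there is a nondecreasing $\Delta^1_1$ function $d$ with $d(k)\geq p_S(k)\geq k$ for all $k$. Define $f(0)=0$ and $f(n+1)=d(f(n))+1$; then $f$ is $\Delta^1_1$ and strictly increasing. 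Writing $t=|S\cap[0,f(n))|\leq f(n)$, the least element of $S$ above $f(n)$ is $p_S(t)\leq d(t)\leq d(f(n))<f(n+1)$, so $[f(n),f(n+1))\cap S\neq\emptyset$. Taking $F(m)=\{\hat g(m)\}$ for $m\geq 1$ and $F(0)=\emptyset$ gives $|F(m)|\leq m$ and a hit $g(m)=\hat g(m)\in F(m)$ in each interval.

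For $(2)\Rightarrow(1)$ I first extract $\Delta^1_1$-domination. Given $g\leq_h x$, apply (2) to the nondecreasing function $\psi(m)=\max_{k\leq m}g(k)\geq g(m)$ to obtain $f$ and $F$, and set $B(n)=\max\{v\mid v\in F(m),\ f(n)\leq m<f(n+1)\}$, a $\Delta^1_1$ function. For $k\in[f(n),f(n+1))$ the hit $m_{n+1}\in[f(n+1),f(n+2))$ satisfies $k<f(n+1)\leq m_{n+1}$, so $g(k)\leq\psi(k)\leq\psi(m_{n+1})\leq B(n+1)$; thus $D(k)=B(n+1)+1$ on $[f(n),f(n+1))$ (and $D(k)=B(0)+1$ for $k<f(0)$) defines a $\Delta^1_1$ function dominating $g$.

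The remaining and essential task is $\Delta^1_1$-semi-traceability. Given $g\leq_h x$, apply (2) to get $f,F$ with a hit $m_n\in[f(n),f(n+1))$ satisfying $g(m_n)\in F(m_n)$. My plan is to guess, uniformly over each interval, one rank: for a $\Delta^1_1$ rank guess $r(n)$ set $\hat g(m)$ equal to the $r(n)$-th element of $F(m)$ for every $m\in[f(n),f(n+1))$ (with a default value where $|F(m)|\leq r(n)$). Writing $j_n<|F(m_n)|\leq m_n<f(n+1)$ for the true rank of $g(m_n)$ in $F(m_n)$, this $\hat g$ catches the hit in interval $n$ whenever $r(n)=j_n$, so semi-tracing $g$ reduces to semi-tracing the single $\Delta^1_1$-bounded function $n\mapsto j_n$ infinitely often by a $\Delta^1_1$ function. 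The main obstacle is exactly this bounded case: a trace obeying only $|F(m)|\leq m$ does not by itself single out one value, and merely iterating the rank reduction keeps the bound at the identity rather than lowering it. The base case is clear, since a function with fixed finite range must repeat a value infinitely often and constant functions are $\Delta^1_1$, so the crux is to drive the $\Delta^1_1$-bounded function down to finite range; I expect this to require exploiting $\Delta^1_1$-domination together with the one-hit-per-interval structure of (2) in a combinatorial (iterated or simultaneous) construction rather than a single invocation of the trace.
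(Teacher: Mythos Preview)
Your argument for $(1)\Rightarrow(2)$ and for the domination half of $(2)\Rightarrow(1)$ is correct and matches the paper's approach (the paper is terser on $(1)\Rightarrow(2)$, simply noting that $|F(m)|=1\leq m$ suffices once a semi-trace is in hand and domination produces the interval function).

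The gap is in the semi-traceability half of $(2)\Rightarrow(1)$, and you have accurately diagnosed it yourself: applying (2) directly to $g$ leaves you having to semi-trace the rank function $n\mapsto j_n$, which is bounded only by $f(n+1)$, and you do not close this loop. Saying you ``expect'' a further combinatorial construction is not a proof; as written the argument is incomplete.

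The missing idea is short and avoids any iteration: do not apply (2) to $g$ but to the \emph{packed} function
\[
h(n)=\langle g(2^n),g(2^n+1),\ldots,g(2^{n+1}-1)\rangle.
\]
Since $h\leq_h x$, condition (2) yields $\Delta^1_1$ functions $f_h,F_h$ with $|F_h(m)|\leq m$ and a hit in every $f_h$-interval. Now define a $\Delta^1_1$ function $\hat g$ by: for $m\in[2^n,2^{n+1})$, let $\hat g(m)$ be the $(m-2^n)$-th coordinate of the $(m-2^n)$-th element of $F_h(n)$ (with a default value if undefined). Whenever $n$ is a hit, i.e.\ $h(n)\in F_h(n)$, say $h(n)$ is the $j$-th element of $F_h(n)$ with $j<|F_h(n)|\leq n<2^n$, then at position $m=2^n+j$ we get $\hat g(m)$ equal to the $j$-th coordinate of $h(n)$, which is $g(2^n+j)=g(m)$. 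Since hits occur in every $f_h$-interval there are infinitely many such $n$, hence infinitely many $m$ with $\hat g(m)=g(m)$.

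The point is that packing into blocks of length $2^n$ makes the block size strictly dominate the trace bound $|F_h(n)|\leq n$, so the diagonal selection (position $j$ picks element $j$, coordinate $j$) is guaranteed to land on the true tuple somewhere in the block. This is exactly the mechanism used earlier in the paper to pass from $\Delta^1_1$-traceability to $\Delta^1_1$-semi-traceability, and it resolves your ``crux'' with a single invocation of (2) rather than the iterated construction you were anticipating.
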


\begin{proof}
(1)$\implies$ (2): Immediate because $1\le n$.

(2)$\implies$ (1). Suppose we are given a function $\hat{g}\leq_h x$. Without loss of
generality, $\hat{g}$ is nondecreasing. Let $f$ and $F$ be the corresponding $\Delta^1_1$ functions. Let $j(n)=\sum_{i\leq
f(n+1)}\sum_{k\in F(i)}k$ and note that $j$ is a $\Delta^1_1$ function
dominating $\hat{g}$.

To show that $x$ is $\Delta^1_1$-traceable, suppose we are given a function
$\hat{g}\leq_h x$. Let $h(n)=\langle g(2^{n}+1), \linebreak[3]
g(2^{n}+2),\ldots,g(2^{n+1}-1)\rangle$. Then by assumption there are corresponding
$\Delta^1_1$ functions $f_h$ and $F_h$. For every $n$ and $m \in
[2^n,2^{n+1})$, let $g(m)=$ the $(m-2^n)^{\text{th}}$ column of the $(m-2^n)^{\text{th}}$
element in $F_h(n)$ if such an $m$ exists; let $g(m)=1$ otherwise.
Then $g$ is a $\Delta^1_1$ function semi-tracing $\hat{g}$.
\end{proof}

\noindent
To separate $\Delta^1_1$-traceability from the conjunction of
$\Delta^1_1$-semi-traceability and  $\Delta^1_1$-do\-minability, we have
to modify Sacks' perfect set forcing.
\begin{definition}
\begin{enumerate}
\item  A $\Delta^1_1$ perfect tree $T \subseteq 2^{<\omega}$ is {\em
fat} at $n$ if for every $\sigma \in T$ with $|\sigma|\in [2^n,
2^{n+1})$, we have $\sigma^{\smallfrown}0 \in T$ and
$\sigma^{\smallfrown}1\in T$. Then we also say that $n$ is a
\emph{fat number} of $T$.
\item  A $\Delta^1_1$ perfect tree $T \subseteq 2^{<\omega}$ is {\em
clumpy} if there are infinitely many $n$ so that $T$ is fat at $n$.
\item  Let $\mathbb{F}=(\mathcal{F},\subseteq)$ be a partial
order of which the domain $\mathcal{F}$ is the collection of clumpy trees, ordered by inclusion.
\end{enumerate}\end{definition}

\noindent
Let $\varphi$ be a sentence of  $\mathfrak{L}(\CK,
\dot{x})$.  Then we can define the forcing relation, $T\Vdash
\varphi$, as done by Sacks in Section 4, IV \cite{Sacks90}.
\begin{enumerate}
\item $\varphi$ is ranked and $\forall x \in T(\mathfrak{A}(\CK,
x)\models \varphi)$, then $T \Vdash \varphi.$
\item If $\varphi(y)$ is unranked and $T \Vdash \varphi(\psi(n))$ for
some $\psi(n)$ of rank at most $\alpha$, then $T \Vdash \exists
y^{\alpha}\varphi(y^{\alpha})$.
\item If  $T \Vdash \exists y^{\alpha}\varphi(y^{\alpha})$, then $T
\Vdash \exists y \varphi(y)$.
\item If $\varphi(n)$ is unranked and $T \Vdash \varphi(m)$ for some
number $m$, then $T \Vdash \exists n\varphi(n)$.
\item If $\varphi$ and $\psi$ are unranked, $T \Vdash \varphi$ and $T
\Vdash \psi$, then $T \Vdash \varphi \wedge \psi$.
\item If $\varphi$ is unranked and $\forall P(P \subseteq T \implies P
\not\Vdash \varphi)$, then $T \Vdash \neg\varphi$.
\end{enumerate}
The following lemma can be deduced as done in \cite{Sacks90}.

\begin{lemma}\label{lemma comlexity of clumpy forcing}
 The relation $T \Vdash \varphi$, restricted to $\Sigma^1_1$
formulas $\varphi$, is $\Pi^1_1$.
\end{lemma}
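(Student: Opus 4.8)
The plan is to follow the analysis of $\Vdash$ in Sacks \cite{Sacks90}, isolating the fact that for $\Sigma^1_1$ sentences the recursion defining forcing never reaches the negation clause (6) — the only clause that quantifies over subconditions $P\subseteq T$ and would otherwise push the complexity up to $\Pi^1_2$. First I would record the base case. If $\theta$ is ranked, then clause (1) is the only applicable clause: clauses (2)--(6) each require the relevant subsentence to be unranked, while a ranked sentence, whatever its principal connective, keeps all set variables ranked. Hence
$$ T \Vdash \theta \iff (\forall x \in [T])\,\mathfrak{A}(\CK,x) \models \theta. $$
By Theorem \ref{theorem ramified sg thm} (ranked sentences are $\Sigma^1_1$), the relation $\{(n_\theta,x)\mid \mathfrak{A}(\CK,x)\models\theta\}$ is $\Pi^1_1$ uniformly in the G\"odel number $n_\theta$, and ``$x\in[T]$'' is $\Pi^0_1$ in $(x,T)$; since $\Pi^1_1$ is closed under universal real quantification, this relation is $\Pi^1_1$.

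Next I would treat a general $\Sigma^1_1$ sentence $\varphi = \exists x_1\cdots\exists x_n\,\psi$, where $\psi$ has no unranked set variable bound by a quantifier. Applying clauses (3) and (2) to strip the outer unranked existentials one at a time — and absorbing the quantifier over ranks into the quantifier over names, since every ranked name has a rank below $\CK$ — gives
$$ T \Vdash \varphi \iff (\exists\ \text{ranked names } t_1,\dots,t_n)\; T \Vdash \psi(t_1,\dots,t_n). $$
The crucial point is that once ranked names are substituted for the free unranked variables $x_1,\dots,x_n$, the sentence $\psi(t_1,\dots,t_n)$ is \emph{ranked} (the set quantifiers internal to $\psi$ were ranked to begin with), so the base case applies and
$$ T \Vdash \varphi \iff (\exists\ \text{ranked names } \vec t\,)\,(\forall x\in[T])\,\mathfrak{A}(\CK,x)\models\psi(\vec t). $$
Here the quantifier over $\vec t$ is a number quantifier carrying the side condition that each $t_i$ codes a ranked name; that side condition is $\Pi^1_1$, since the set of G\"odel numbers of formulas is $\Pi^1_1$ and rankedness is witnessed through membership in the $\Pi^1_1$ path $\KO_1$.

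Finally I would assemble the bound. The code $n_{\psi(\vec t)}$ is recursive in $n_\varphi$ and $\vec t$; the inner relation $\mathfrak{A}(\CK,x)\models\psi(\vec t)$ is $\Pi^1_1$ in $(x,n_{\psi(\vec t)})$ by Theorem \ref{theorem ramified sg thm}, and adjoining ``$x\in[T]$'' and universally quantifying over the real $x$ keeps it $\Pi^1_1$. Conjoining the $\Pi^1_1$ side condition on $\vec t$ and prefixing the number quantifier $\exists\vec t$ again preserves $\Pi^1_1$, because $\Pi^1_1$ is closed under finite intersection and under number quantifiers. Intersecting with the $\Pi^1_1$ set of codes of $\Sigma^1_1$ sentences then yields the relation as stated.

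The main thing to verify carefully — and the only place the argument could break — is the stripping step of the second paragraph: I must confirm that an unranked existential $\exists x_i\chi$ can be forced \emph{only} via clauses (2) and (3), so that the forcing of a (positive) $\Sigma^1_1$ sentence is genuinely witnessed by ranked names and never invokes clause (6). This is exactly the feature that makes the computation insensitive to replacing Sacks' perfect-set forcing by the clumpy-tree forcing $\mathbb{F}$: for $\Sigma^1_1$ sentences, $\Vdash$ refers only to the paths $[T]$ and to ranked truth, not to subconditions of $T$, so the clumpiness requirement plays no role in the complexity estimate.
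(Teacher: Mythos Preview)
Your proposal is correct and matches what the paper intends: the paper gives no argument here beyond the sentence ``can be deduced as done in \cite{Sacks90}'', and your write-up is precisely that deduction, isolating the fact that for $\Sigma^1_1$ sentences only clauses (1)--(5) are ever triggered so that forcing reduces to an existential number quantifier over ranked names followed by the $\Pi^1_1$ ranked base case. Your closing remark that clause (6) is never reached---and hence that the particular poset (clumpy trees versus arbitrary perfect trees) plays no role in the complexity estimate---is exactly the point one needs to transfer Sacks' lemma to $\mathbb{F}$.
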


\begin{lemma}\label{lemma fusion lemma}
\begin{enumerate}
\item Let $\{\varphi_i\}_{i\in \omega}$ be a hyperarithmetic sequence
of $\Sigma^1_1$ sentences. Suppose for every $i$ and $Q \subseteq T$,
there exists some $R \subseteq Q$ so that $R \Vdash \varphi_i$. Then
there exists some $Q \subseteq T$ so that for every $i$, $Q \Vdash \varphi_i$.
\item $\forall \varphi \forall T \exists Q \subseteq T (Q \Vdash
\varphi \vee Q\Vdash\neg\varphi)$.
\end{enumerate}
\end{lemma}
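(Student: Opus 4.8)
The plan is to prove (1) by a fusion argument adapted to clumpy trees and to obtain (2) directly from the definition of the forcing relation. For (1), I would first record two structural facts about $\Vdash$ restricted to $\Sigma^1_1$ sentences, both by induction on the clauses defining the forcing relation. First, forcing is preserved downward: if $R\Vdash\varphi$ and $Q\subseteq R$ is clumpy, then $Q\Vdash\varphi$. For ranked $\varphi$ this is clause (1) together with $[Q]\subseteq[R]$; for $\exists y^{\alpha}\psi$ one uses that the witnessing instance $\psi(\eta)$ is ranked, applies the inductive hypothesis, and then invokes clauses (2)--(3). Second, an amalgamation property: if $\sigma_1,\dots,\sigma_r$ are the nodes of a clumpy tree $P$ at some fixed level and $R_{\sigma_j}\subseteq P_{\sigma_j}$ with $R_{\sigma_j}\Vdash\varphi$ for each $j$, then the tree $Q$ obtained by grafting $R_{\sigma_j}$ above $\sigma_j$ satisfies $Q\Vdash\varphi$. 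For ranked $\varphi$ this is clear since $[Q]=\bigcup_j[R_{\sigma_j}]$; for an existential one must combine the finitely many witnessing terms $\eta_j$ into a single ranked term defined by cases on which $\sigma_j$ the generic extends, whose rank remains below $\CK$.

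With these facts in hand I would run the fusion. Listing the fat numbers $n_0<n_1<\cdots$ of the intermediate conditions, I build a descending sequence $T=Q_0\supseteq Q_1\supseteq\cdots$ in which $Q_{i+1}$ is obtained from $Q_i$ by leaving $Q_i$ unchanged below level $2^{n_i+1}$ and, above each node $\sigma\in Q_i$ at that level, replacing $(Q_i)_{\sigma}$ by a clumpy tree $R_{\sigma}\subseteq (Q_i)_{\sigma}$ with $R_{\sigma}\Vdash\varphi_i$, which the density hypothesis supplies. Since nothing changes below level $2^{n_i+1}$, the tree $Q_{i+1}$ remains fat at $n_0,\dots,n_i$, so the fusion $Q=\bigcap_i Q_i$ is fat at infinitely many numbers and hence clumpy, while $Q_{i+1}\Vdash\varphi_i$ by amalgamation. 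For each $i$ we then have $Q\subseteq Q_{i+1}\Vdash\varphi_i$, so $Q\Vdash\varphi_i$ by downward preservation. To keep $Q$ hyperarithmetic I would carry the entire recursion out inside the admissible set $L_{\CK}$, using that $\{\varphi_i\}_{i\in\omega}$ is hyperarithmetic and that, by Lemma \ref{lemma comlexity of clumpy forcing}, the relation $R\Vdash\varphi_i$ is $\Pi^1_1$; this lets the successive choices of the $R_{\sigma}$ be made $\Sigma_1$-definably over $L_{\CK}$, so that the sequence and its intersection $Q$ are $\Delta^1_1$.

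For (2), fix $\varphi$ and $T$. If some $P\subseteq T$ satisfies $P\Vdash\varphi$, take $Q=P$. Otherwise no clumpy subtree of $T$ forces $\varphi$; when $\varphi$ is unranked, clause (6) yields $T\Vdash\neg\varphi$ immediately and we take $Q=T$, while for ranked $\varphi$ one prunes $T$ into the $\Delta^1_1$ set $\{x\mid\mathfrak{A}(\CK,x)\models\neg\varphi\}$ to obtain a clumpy $Q\subseteq T$ all of whose paths satisfy $\neg\varphi$, whence $Q\Vdash\neg\varphi$ by clause (1).

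The step I expect to be the main obstacle is the amalgamation property for existential $\Sigma^1_1$ sentences: manufacturing a single ranked witnessing term out of the finitely many per-node witnesses, while keeping its rank below $\CK$, is exactly the delicate point of Sacks' construction, and it is what makes the grafting step in the fusion legitimate. A secondary point requiring care is the hyperarithmetic bookkeeping that keeps the fused tree $Q$ inside $\Delta^1_1$.
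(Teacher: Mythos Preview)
Your plan for (1) is the same strategy the paper uses: a fusion argument in which the successive refinements are selected via $\Pi^1_1$ uniformization of the forcing relation (Lemma~\ref{lemma comlexity of clumpy forcing}), so that the whole construction and hence the fused tree $Q$ remain $\Delta^1_1$. The two preliminary facts you isolate (downward preservation of $\Vdash$ on $\Sigma^1_1$ sentences, and amalgamation of finitely many witnesses for an existential via a case-splitting ranked term) are correct and are exactly what the paper uses implicitly when it asserts without further comment that the fused $Q$ forces each $\varphi_i$; making them explicit is a genuine improvement in exposition. The paper, incidentally, has each piece force $\bigwedge_{j\le i}\varphi_j$ rather than just $\varphi_i$, so it does not need your downward-preservation lemma separately.

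Where your write-up diverges from the paper is in the bookkeeping of the fusion, and here there is a real gap. You organize the fusion as a single descending chain $T=Q_0\supseteq Q_1\supseteq\cdots$, grafting clumpy $R_\sigma\subseteq (Q_i)_\sigma$ above every node $\sigma$ at level $2^{n_i+1}$. The difficulty is that after the first graft the tree $Q_1$ need not be clumpy: the finitely many $R_\sigma$ are each clumpy but may have pairwise disjoint sets of fat numbers, so $Q_1$ can be fat only at $n_0$. Your sentence ``$Q_{i+1}$ remains fat at $n_0,\dots,n_i$'' therefore fails beyond $i=0$, and there is no legitimate choice of $n_1$. The paper avoids this by indexing the fusion by finite strings rather than by a linear chain: it builds a hyperarithmetic family $\{P_\sigma\}_{\sigma\in 2^{<\omega}}$, splitting $P_\sigma$ into disjoint clumpy subtrees $P_{\sigma\f 0},P_{\sigma\f 1}$ exactly when $|\sigma|=2^{n+1}$ with $n$ a fat number of $P_\sigma$, and sets $Q=\bigcap_k\bigcup_{|\sigma|=k}P_\sigma$. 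Because each branch carries its own clumpy tree with its own supply of fat numbers, the recursion never stalls; one then checks directly that $Q\in\mathcal F$.

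For (2), your unranked case via clause~(6) is exactly right, but the ranked case is too quick: from ``no clumpy $P\subseteq T$ forces $\varphi$'' you cannot simply prune to get a clumpy $Q\subseteq T$ with $[Q]\subseteq\{x:\mathfrak A(\CK,x)\models\neg\varphi\}$; that dichotomy for a $\Delta^1_1$ set needs its own inductive argument (on Borel rank, using (1) at limit stages). The paper does not spell this out either; it simply refers to Sacks' Lemma~4.4.IV.
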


\begin{proof}
Using the notation $P \uh
n=\{\tau \in 2^{\leq n} \mid \tau \in P\}$, define $\mathcal{R}$ by
$$\mathcal{R}(R,i,\sigma, P)\Leftrightarrow (
\sigma \in R,\, P \subseteq R ,\,P \Vdash \varphi_i,\,
P\uh |\sigma|=\{\tau \mid \tau\prec\sigma\},$$
$$ \mbox{ and }\log|\sigma|-1\mbox{ is the }i^{\text{th}}\mbox{ fat number of } R).
$$

\noindent Note that $\mathcal{R}$ is a $\Pi^1_1$ relation. Then
$\mathcal{R}$ can be uniformized by a partial $\Pi^1_1$ function
$F:\mathcal{F}\times \omega \times 2^{<\omega}\to \mathcal{F}$.
Using $F$, a hyperarithmetic family $\{P_{\sigma}\mid \sigma\in
2^{<\omega}\}$ can be defined by recursion on $\sigma$.

$P_{\emptyset}=T.$

If $ \log|\sigma|-1 \mbox{ is not a fat number of } P_{\sigma}$, then
$P_{\sigma^{\smallfrown}0}, P_{\sigma^{\smallfrown}1}= P_{\sigma}.$

Otherwise: If $\sigma \not\in P_{\sigma}$, then
$P_{\sigma^{\smallfrown}0}= P_{\sigma^{\smallfrown}1}=\emptyset$.

  \ \ \ Otherwise:  $P_{\sigma^{\smallfrown}0}\cap
P_{\sigma^{\smallfrown}1}=\emptyset, P_{\sigma^{\smallfrown}0}\cup
P_{\sigma^{\smallfrown}1}\subseteq P_{\sigma},$

 \ \ \ $P_{\sigma^{\smallfrown}0}\uh |\sigma|, P_{\sigma^{\smallfrown}1}\uh
|\sigma|=\{\tau \mid \tau\prec\sigma\}$ and

 \ \ \ $P_{\sigma^{\smallfrown}0}, P_{\sigma^{\smallfrown}1}\Vdash
\wedge_{j\leq i}\varphi_{j}$ where

\ \ \ $i \mbox{ is the number so that } \log|\sigma|-1 \mbox{ is the }
       i \mbox{-th fat number of } T.$
\\
Let $Q=\bigcap_n \bigcup_{|\sigma|=n}P_{\sigma}$. Then $Q \in
\mathcal{F}$. It is routine to check that for every $i$, $Q \Vdash \varphi_i$.

\bigskip
\noindent
The proof of (2) is the same as the proof of Lemma 4.4 IV \cite{Sacks90}.
\end{proof}

\noindent
We say that a real $x$ is generic if it is the union of roots of trees in a generic filter; equivalently, for each $\Sigma^1_1$ sentence $\varphi$, there is
a condition $T$ such that $x \in T$ and either $T \Vdash \varphi$ or
$T \Vdash \neg\varphi$. One can check  (Lemma 4.8, IV \cite{Sacks90})  that for every
$\Sigma^1_1$-sentence $\varphi$,
$$\mathfrak{A}(\CK, x)\models \varphi \Leftrightarrow \exists P(x \in
P \wedge P\Vdash \varphi).$$

\begin{lemma}\label{lemma basics for generic}
If $x$ is a generic real, then
\begin{enumerate}
\item $\mathfrak{A}(\CK, x)$ satisfies $\Delta^1_1$-comprehension. So
$\omega_1^x=\CK$.
\item $x$ is $\Delta^1_1$-dominated  and $\Delta^1_1$-semi-traceable.
\item $x$ is not $\Delta^1_1$-traceable.
\end{enumerate}
\end{lemma}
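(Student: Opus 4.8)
The plan is to verify the three properties of a generic real $x$ by exploiting the forcing machinery developed in Lemmas \ref{lemma comlexity of clumpy forcing} and \ref{lemma fusion lemma}, together with the fusion argument that lets us meet countably many dense requirements inside a single condition.

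\medskip
\noindent
\emph{Part (1).} For $\Delta^1_1$-comprehension it suffices to show that for each ranked formula $\varphi(n)$, the set $\{n \mid \mathfrak{A}(\CK,x)\models \varphi(n)\}$ exists in $\mathfrak{A}(\CK,x)$. First I would observe that by Lemma \ref{lemma fusion lemma}(2), below any condition $T$ one can decide each instance $\varphi(n)$; applying the fusion of Lemma \ref{lemma fusion lemma}(1) to the hyperarithmetic sequence of these decision requirements, a generic $x$ lies in some condition that forces a complete answer for all $n$ simultaneously. Genericity then transfers forcing to truth via the equivalence $\mathfrak{A}(\CK,x)\models \varphi \Leftrightarrow \exists P(x\in P \wedge P\Vdash\varphi)$ stated just before the lemma, so the characteristic function of the set is itself definable over $\mathfrak{A}(\CK,x)$. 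Since $\mathfrak{A}(\CK,x)$ models $\Delta^1_1$-comprehension, its ordinal height is exactly $\CK$, giving $\omega_1^x=\CK$; this is the standard conclusion (see Sacks \cite{Sacks90}) once comprehension is in hand.

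\medskip
\noindent
\emph{Part (2).} Both dominability and semi-traceability reduce to density arguments. Given a name for a function $g\leq_h x$, presented by a $\Sigma^1_1$ formula, I would show that for each $n$ the set of conditions deciding a value (or a bound) for $g(n)$ is dense; fusing these over all $n$ via Lemma \ref{lemma fusion lemma}(1) yields a $\Delta^1_1$ function majorizing $g$, which gives $\Delta^1_1$-dominability. For $\Delta^1_1$-semi-traceability the clumpiness of the forcing conditions is exactly what is needed: at each fat number the tree splits, so infinitely often the generic $x$ is free to take either value on a whole block, and by a density/genericity argument one can arrange that a fixed $\Delta^1_1$ guessing function agrees with $g$ on infinitely many inputs. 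The key point is that because clumpy trees are fat at infinitely many $n$, one never loses the room to meet the semi-tracing requirement infinitely often.

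\medskip
\noindent
\emph{Part (3).} This is the crux and the main obstacle: I must produce an $x$-hyperarithmetic function that defeats every candidate $\Delta^1_1$ trace. By Corollary \ref{corollary: characterizaton fo delta11 traceable}, $x$ is $\Delta^1_1$-traceable iff for every $x$-hyperarithmetic $\hat f$ there is a hyperarithmetic $f$ agreeing with $\hat f$ on at least one point of each interval $[2^n,2^{n+1})$. I would define $\hat f$ directly from the generic: on the fat blocks of the tree, the successive bits of $x$ can be read off $x$-hyperarithmetically, and a genuine diagonalization against the (hyperarithmetically listed) potential traces $f$ is possible precisely because at a fat number the condition permits \emph{both} extensions, so no single condition can force agreement with a prescribed hyperarithmetic value across the whole interval. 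The delicate part is arranging the diagonalization uniformly against all traces at once while respecting the forcing: I would set up, for each index of a potential $\Delta^1_1$ trace, a dense set of conditions that force $\hat f$ to disagree with that trace on every point of some fat interval, and then invoke the fusion Lemma \ref{lemma fusion lemma}(1) to secure all these requirements below a single generic condition. Genericity then guarantees $x$ meets each requirement, so no hyperarithmetic $f$ can trace $\hat f$, and hence $x$ is not $\Delta^1_1$-traceable.
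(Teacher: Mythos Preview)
Your outline for Part~(1) matches the paper's, which simply refers to the analogous result in Sacks~\cite{Sacks90}.

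For Part~(2) your treatment of semi-traceability has a gap. You say that ``at each fat number the tree splits, so infinitely often the generic $x$ is free to take either value on a whole block,'' but this concerns the \emph{bits of $x$}, not the values of an arbitrary $g\le_h x$; you never explain how to produce a single $\Delta^1_1$ function that agrees with $g$ infinitely often, nor why the freedom in $x$'s bits on fat blocks translates into control over $g$. The paper does not separate the two properties at all: it invokes Proposition~\ref{proposition: Charaterization of semi+dominated} and establishes the combined criterion directly. Starting from a condition $S\Vdash\forall n\,\exists! m\,\varphi(n,m)$ naming $g$, one runs the fusion of Lemma~\ref{lemma fusion lemma} below an arbitrary $T\subseteq S$ so that at each fat level every live node $\sigma$ forces a definite value $m_\sigma$ for $g(|\sigma|)$; setting $F(n)=\{0\}\cup\{m_\sigma:|\sigma|=n\}$ and letting $f$ enumerate the fat levels of the fused condition yields the $\Delta^1_1$ data required by Proposition~\ref{proposition: Charaterization of semi+dominated}, giving dominability and semi-traceability simultaneously.

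For Part~(3) your plan is workable but needlessly elaborate. The paper simply takes $\hat f$ to be $x$ itself (its characteristic function) and argues by contradiction against a \emph{single} candidate $\Delta^1_1$ function $f$, with no fusion. If some $T\ni x$ forces $\forall n\,\exists m\in[2^n,2^{n+1})\,(\varphi(m,\dot x(m)))$, pick any fat number $n$ of $T$ and any $\sigma\in T\cap 2^{2^n-1}$; define $\mu$ of length $2^n$ by $\mu(i)=1-f(2^n-1+i)$, and pass to the subcondition $S=\{\sigma^\smallfrown\mu^\smallfrown\tau\in T\}$. Fatness guarantees $S$ is a genuine subcondition, and $S$ forces $\dot x(m)\ne f(m)$ for every $m\in[2^n,2^{n+1})$, contradicting $S\subseteq T$. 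Thus no condition forces that $f$ traces $x$, so by genericity $f$ does not trace $x$; since $f$ was arbitrary, Corollary~\ref{corollary: characterizaton fo delta11 traceable} finishes. One fat block defeats each $f$---there is no need to diagonalize against all traces at once.
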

\begin{proof}
(1). The proof of (1) is exactly same as the proof of Theorem 5.4 IV,
\cite{Sacks90}.

(2). By Proposition \ref{proposition: Charaterization of semi+dominated}, it suffices to show that  for every function $g\leq_h x$, there are an increasing $\Delta^1_1$ function $f$ and a $\Delta^1_1$ function $F:\omega\to \omega^{<\omega}$  with $|F(n)|\leq
n$ so that for every $n$, there exists some $m\in [f(n), f(n+1))$ so that $g(m)\in F(m)$. Since $g\leq_h x$ and $\omega_1^x=\CK$, there is a ranked formula $\varphi$ so that for every $n$, $g(n)=m$ if and only if $\mathfrak{A}(\CK, x)\models \varphi(n,m) $. So there is a condition $S \Vdash \forall n \exists! m \varphi(n,m)$. Fix a condition $T \subseteq S$. As in the proof of Lemma \ref{lemma fusion lemma}, we can build a hyperarithmetic sequence of conditions $\{P_{\sigma}\}_{\sigma\in 2^{<\omega}}$ so that
$$
P_{\sigma^{\smallfrown}i} \Vdash \varphi(|\sigma|, m_{\sigma^{\smallfrown}i}) \mbox{ for } i\leq 1
$$
if $\log|\sigma|-1$ is a fat number of $P_{\sigma}$ and $\sigma \in P_{\sigma}$.  Let $Q$ be as defined in the proof of Lemma \ref{lemma fusion lemma}. Let $f$ be the $\Delta^1_1$ function such that $f(0)=0$, and $f(n+1)$ is the least number $k>f(n)$ so that $m_{\sigma}$ is defined for some $\sigma$ with $f(n)<|\sigma|<k$. Let $F(n)=\{0\}\cup \{m_{\sigma}\mid |\sigma|=n\}$, and note that $F$ is a $\Delta^1_1$ function. Then
$$
Q \Vdash \forall n |F(n)|\leq n\wedge \forall n \exists m\in [f(n),f(n+1))\exists i \in
F(m)(\varphi(m, i)).
$$
So
$$
Q \Vdash \exists F \exists f (\forall n |F(n)|\leq n \wedge \forall n \exists m\in [f(n),f(n+1))\exists i \in F(m)(\varphi(m, i))).
$$
Since $T$ is an arbitrary condition stronger than $S$, this means
$$
S \Vdash \exists F \exists f (\forall n |F(n)|\leq n \wedge \forall n \exists m\in [f(n),f(n+1))\exists i \in F(m)(\varphi(m, i))).
$$
Since $x\in S$, $$\mathfrak{A}(\CK, x)\models\exists F \exists f (\forall n |F(n)|\leq n \wedge \forall n \exists m\in [f(n),f(n+1))\exists i \in F(m)(\varphi(m, i))).
$$
So $x$ is $\Delta^1_1$-dominated  and $\Delta^1_1$-semi-traceable.

(3). Suppose  $f:\omega\to \omega$ is a $\Delta^1_1$ function so that
for every $n$, there is a number $m \in [2^n, 2^{n+1})$ with
$f(m)=x(m)$. Then there is a ranked formula $\varphi$ so that $f(n)=m
\Leftrightarrow \mathfrak{A}(\CK, x)\models \varphi(n,m)$. Moreover,
$\mathfrak{A}(\CK, x)\models \forall n \exists m\in [2^n, 2^{n+1})
(\varphi(m,x(m))) $.  So there is a condition $T \Vdash \forall n
\exists m\in [2^n, 2^{n+1}) (\varphi(m,\dot{x}(m))) $ and $x \in T$.
Let $n$ be a number so that $T$ is fat at $n$ and $\sigma \in
2^{2^n-1}$ be a finite string in $T$. Let $\mu$ be a finite string so
that $\mu(m)=1-f(m+2^n-1)$. Define
$S=\{\sigma^{\smallfrown}\mu^{\smallfrown} \tau\mid
\sigma^{\smallfrown}\mu^{\smallfrown}\tau \in T\}\subseteq T$. Then $S
\Vdash \forall m\in [2^n, 2^{n+1})(\neg\varphi(m,x(m)))$. But $S$ is
stronger than $T$, a contradiction. By Corollary \ref{corollary:
characterizaton fo delta11 traceable}, $x$ is not $\Delta^1_1$-traceable.
\end{proof}

\noindent
We may now  separate $\Delta^1_1$-traceability from the conjunction of
 $\Delta^1_1$-semi-traceability and  $\Delta^1_1$-dominability.

\begin{theorem}\label{theorem separating traceable and semi-traceable}
There are $2^{\aleph_0}$ many  $\Delta^1_1$-dominated  and
$\Delta^1_1$-semi-traceable reals which are not $\Delta^1_1$-traceable.
\end{theorem}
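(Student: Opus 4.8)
The plan is to produce a perfect set of generic reals and then quote Lemma \ref{lemma basics for generic}. By that lemma every generic real is $\DI$-dominated, $\DI$-semi-traceable, and not $\DI$-traceable, so it suffices to construct $2^{\aleph_0}$ pairwise distinct generic reals. Recall from the discussion preceding Lemma \ref{lemma basics for generic} that a real is generic exactly when it decides every $\Sigma^1_1$ sentence of $\mathfrak{L}(\CK,\dot x)$, and that there are only countably many such sentences; fix an enumeration $\{\varphi_i\}_{i\in\omega}$ of them.

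I would build a family of conditions $\{T_\rho\}_{\rho\in 2^{<\omega}}$ indexed by finite binary strings, maintaining three invariants: each $T_\rho$ is a clumpy tree (hence a legitimate member of $\mathcal{F}$); $T_{\rho^{\smallfrown}0}$ and $T_{\rho^{\smallfrown}1}$ are disjoint subtrees of $T_\rho$; and $T_\rho$ decides $\varphi_i$ (that is, $T_\rho\Vdash\varphi_i$ or $T_\rho\Vdash\neg\varphi_i$) for every $i<|\rho|$. The base case is $T_\emptyset=2^{<\omega}$. For the inductive step I start from $T_\rho$, choose a fat number $n$ of $T_\rho$ together with a node $\sigma\in T_\rho$ with $|\sigma|\in[2^n,2^{n+1})$; by fatness both $\sigma^{\smallfrown}0$ and $\sigma^{\smallfrown}1$ lie in $T_\rho$, so the restrictions of $T_\rho$ to the nodes comparable with $\sigma^{\smallfrown}0$ and to the nodes comparable with $\sigma^{\smallfrown}1$ are two disjoint subconditions, each still clumpy because every fat number of $T_\rho$ exceeding $n$ survives the restriction. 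I then apply Lemma \ref{lemma fusion lemma}(2) to thin each of these to a condition deciding $\varphi_{|\rho|}$, and declare the results to be $T_{\rho^{\smallfrown}0}$ and $T_{\rho^{\smallfrown}1}$. Since forcing is preserved under passage to subconditions, the new conditions still decide $\varphi_i$ for $i<|\rho|$, restoring the invariant.

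Given this perfect tree of conditions, for each $f\in 2^\omega$ the decreasing chain $T_{f\uh 0}\supseteq T_{f\uh 1}\supseteq\cdots$ has increasing stems whose union is a real $x_f$ lying on every $T_{f\uh n}$. Each $x_f$ decides $\varphi_i$ via the condition $T_{f\uh(i+1)}$, so $x_f$ is generic. If $f\neq f'$ and $n$ is least with $f(n)\neq f'(n)$, then $T_{f\uh(n+1)}$ and $T_{f'\uh(n+1)}$ are the two disjoint children of $T_{f\uh n}$, whence $x_f\neq x_{f'}$; thus $f\mapsto x_f$ is injective and yields $2^{\aleph_0}$ distinct generic reals. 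Applying Lemma \ref{lemma basics for generic} to each $x_f$ finishes the argument.

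The main obstacle I anticipate is the bookkeeping of the inductive step rather than any deep difficulty: I must verify that the two branch restrictions are genuine clumpy trees in $\mathcal{F}$, and that the thinning furnished by Lemma \ref{lemma fusion lemma}(2) preserves both the disjointness and the earlier forcing decisions. Disjointness is immediate because the two branches already disagree at coordinate $|\sigma|$, and preservation of earlier decisions is the downward monotonicity of $\Vdash$, which follows from the definition of the forcing relation—in particular clause (6), since any $P\subseteq T_{\rho^{\smallfrown}j}$ is also a subcondition of $T_\rho$.
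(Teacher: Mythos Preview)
Your proposal is correct and follows the paper's approach: the paper's proof is the one-liner ``immediate from Lemma~\ref{lemma basics for generic}, noting that there are $2^{\aleph_0}$ many generic reals,'' and you have simply supplied the standard perfect-tree-of-conditions argument for the latter fact. The only thing the paper leaves implicit that you spell out is the splitting-and-deciding construction, and your handling of clumpiness, disjointness, and monotonicity of forcing is accurate.
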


\begin{proof}
This is
immediate from Lemma \ref{lemma basics for generic}. Note that there
are $2^{\aleph_0}$ many generic reals.
\end{proof}

\bigskip

\section{Lowness for higher Kurtz randomness}

Given a relativizable class of reals $\mathcal{C}$ (for instance, the class of random reals), we call a real $x$ \emph{low for}
$\mathcal{C}$ if $\mathcal{C}=\mathcal{C}^x$. We shall prove that lowness for $\Delta^1_1$-randomness is different
from lowness for $\Delta^1_1$-Kurtz randomness. A real $x$ is \emph{low for $\Delta^1_1$-Kurtz tests} if every $\Delta^1_1(x)$ open
set with measure 1 has a  $\Delta^1_1$ open subset of measure 1. Clearly, lowness for $\Delta^1_1$-Kurtz tests implies lowness for $\Delta^1_1$-Kurtz randomness.

\begin{theorem}\label{lemma: dom+semitrace implies low for delta kurtz}
If $x$ is $\Delta^1_1$-dominated and $\Delta^1_1$-semi-traceable, then $x$ is
low for $\Delta^1_1$-Kurtz tests.
\end{theorem}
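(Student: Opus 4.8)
The plan is to establish the combinatorial form of the conclusion directly: given a $\Delta^1_1(x)$ open set $U$ with $\mu(U)=1$, I will produce a $\Delta^1_1$ open set $V$ with $\mu(V)=1$ and $V\subseteq U$. It is cleaner to work with the complement. Let $\hat U=\{\sigma\mid[\sigma]\subseteq U\}$, which is $\Delta^1_1(x)$, and put $T=2^{<\omega}\setminus\hat U$. Since $U$ is open, $T$ is a $\Delta^1_1(x)$ tree with $[T]=2^{\omega}\setminus U$, a null set. Writing $N(m)=|T\cap 2^m|$, the tree property gives $N(m+1)\le 2N(m)$, so the relative sizes $N(m)/2^m$ are nonincreasing and tend to $\mu([T])=0$. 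It therefore suffices to cover $[T]$ by a $\Delta^1_1$ closed null set $C$, since then $V=2^{\omega}\setminus C$ is $\Delta^1_1$, open, of measure $1$, and contained in $U$.

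Two ingredients drive the construction. First, because $x$ is $\Delta^1_1$-dominated and the function $r(k)=$ the least $m$ with $N(m)/2^m<4^{-k}$ is total and $\le_h x$, I can fix a $\Delta^1_1$ strictly increasing sequence $n_0<n_1<\cdots$ with $n_j\ge r(j)$; by monotonicity, $N(n_j)/2^{n_j}<4^{-j}$ for every $j$, so at these sparse levels the true bad set $T\cap 2^{n_j}$ has relative size below $4^{-j}$. Second, I apply Proposition \ref{proposition: Charaterization of semi+dominated} (which uses exactly $\Delta^1_1$-semi-traceability together with $\Delta^1_1$-dominability) to the function $g(j)=$ a code for $T\cap 2^{n_j}$, which is $\le_h x$. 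This yields an increasing $\Delta^1_1$ function $f$ and a $\Delta^1_1$ function $F$ with $|F(j)|\le j$ such that in every block $[f(n),f(n+1))$ there is at least one index $j$ with $g(j)\in F(j)$, i.e.\ for which $T\cap 2^{n_j}$ is literally one of the at most $j$ finite sets coded by $F(j)$. This block form, rather than mere infinitely-often correctness, is what will let me assemble a genuine closed set.

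Now set $L_n=n_{f(n+1)}$ and define a $\Delta^1_1$ set $B_n\subseteq 2^{L_n}$ by taking, over all $j\in[f(n),f(n+1))$ and all codes $c\in F(j)$ whose coded set $D_c\subseteq 2^{n_j}$ is \emph{admissible} in the sense $|D_c|/2^{n_j}<4^{-j}$, the union of all length-$L_n$ extensions of $D_c$. For the good index $j$ in the block, the correct candidate $D_c=T\cap 2^{n_j}$ is admissible, and its extensions contain $T\cap 2^{L_n}$ by the tree property, so $T\cap 2^{L_n}\subseteq B_n$; hence every $y\in[T]$ satisfies $y\uh L_n\in B_n$ for all $n$. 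On the other hand each admissible candidate contributes relative measure below $4^{-j}$, and there are at most $j$ of them per index, so $|B_n|/2^{L_n}\le\sum_{j\ge f(n)}j\,4^{-j}$, a tail of a convergent series and hence tending to $0$. Letting $C=\{y\mid\forall n\,(y\uh L_n\in B_n)\}$ gives a $\Delta^1_1$ closed set with $[T]\subseteq C$ and $\mu(C)\le\inf_n |B_n|/2^{L_n}=0$, so $V=2^{\omega}\setminus C$ is the desired subset of $U$.

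The one genuine obstacle is a tension between the two obvious strategies. Plain $\Delta^1_1$-semi-tracing of the sets $T\cap 2^m$ is correct only infinitely often, so on its own it produces a $\limsup$-type ($G_\delta$) cover rather than a closed one; meanwhile a naive per-block union inflates the measure by both the number of candidates and the block width. The resolution is to use dominability twice over: once to pre-thin the levels so that the true approximations decay geometrically at rate $4^{-j}$, and once inside Proposition \ref{proposition: Charaterization of semi+dominated} to force a correct guess in \emph{every} block. The geometric factor $4^{-j}$ then absorbs both the candidate count $j$ and the block width, while the per-block checkpoints assemble into an honest tree. Checking that $g\le_h x$, that every displayed object is uniformly $\Delta^1_1$, and the elementary measure estimates is routine.
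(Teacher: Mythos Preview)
Your proof is correct. Both your argument and the paper's rest on the same mechanism: combine $\Delta^1_1$-semi-traceability with $\Delta^1_1$-dominability to obtain a \emph{block-wise} correct guess (at least one hit in every interval $[f(n),f(n+1))$), and then use these per-block hits to assemble a genuine $\Delta^1_1$ test rather than a $\limsup$ object.

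The executions differ in two ways. First, the paper works directly with the open set: writing $U=U^y$ via a reduction, it traces the single function $\hat f(n)=$ the shortest $\sigma\prec y$ with $\mu(U^\sigma[\sigma])>1-2^{-n}$, obtains a $\Delta^1_1$ function $f$ matching $\hat f$ once per block $[g(n),g(n+1))$, and sets $V=\bigcup_n\bigcap_{m\in[g(n),g(n+1))}U^{f(m)}[m]$. Because each block contains a correct $m$ with $U^{f(m)}[m]\subseteq U$, the intersection over the block already lies inside $U$, and the $2^{-m}$ measure defects sum over the block. You instead dualize to the closed complement $[T]$, invoke Proposition~\ref{proposition: Charaterization of semi+dominated} to trace codes for the slices $T\cap 2^{n_j}$ by a \emph{multi-valued} $F$ with $|F(j)|\le j$, and therefore need the extra pre-thinning step (the $4^{-j}$ decay at levels $n_j$) plus the admissibility filter so that the multiplicity $j$ is absorbed by the geometric rate. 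Second, the paper's single-valued trace comes straight from the two hypotheses without quoting Proposition~\ref{proposition: Charaterization of semi+dominated}; your route goes through that proposition.

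Net effect: the paper's version is a little slicker (no thinning, no admissibility check, single-valued trace), while yours makes the role of Proposition~\ref{proposition: Charaterization of semi+dominated} explicit and shows clearly why the block form---not mere infinitely-often correctness---is what produces a closed cover. Both are valid.
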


\begin{proof}
Suppose $x$ is $\Delta^1_1$-dominated and $\Delta^1_1$-semi-traceable and $U$ is a
$\Delta^1_1(x)$ open set with measure 1. Then there is a real $y\leq_h
x$ so that $U$ is $\Sigma^0_1(y)$. Hence for some Turing reduction $\Phi$, if for all $z$ we write $U^z$ for the domain of $\Phi^z$, then we have $U=U^y$.

Define a $\Delta^1_1(x)$ function $\hat{f}$ by: $\hat{f}(n)$ is the shortest string
$\sigma \prec y $ so that $\mu(U^{\sigma}[\sigma])>1-2^{-n}$. By the
assumptions of the Theorem, there  are an increasing $\Delta^1_1$ function $g$ and a $\Delta^1_1$ function $f$ so that for every $n$, there is an
$m\in[g(n),g(n+1))$ so that $f(m)=\hat{f}(m)$.  Without loss of
generality, we can assume that $\mu(U^{f(m)}[m])>1-2^{-m}$ for every $m$.

Define a $\Delta^1_1$ open set $V$ so that $\sigma \in V$ if and only
if there exists some $n$ so that $[\sigma]\subseteq \bigcap_{g(n)\leq
m <g(n+1)}U^{f(m)}[m]$. By the property of $f$ and $g$, $V\subseteq
U^y=U$. But for every $n$, $$\mu(\bigcap_{g(n)\leq m
<g(n+1)}U^{f(m)}[m])>1-\sum_{g(n)\leq m <g(n+1)}2^{-m}\geq 1-2^{-g(n)+1}.$$
So $$\mu(V)\geq \lim_n \mu\left(\bigcap_{g(n)\leq m <g(n+1)}U^{f(m)}[m]\right)=1.$$
Hence $x$ is low for $\Delta^1_1$-Kurtz tests.
\end{proof}

\begin{corollary}
Lowness for $\Delta^1_1$-randomness differs  from lowness for
$\Delta^1_1$-Kurtz randomness.
\end{corollary}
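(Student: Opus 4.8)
The plan is to exhibit a single real witnessing that the two lowness classes are distinct, by combining the separation produced by forcing with the sufficient condition for lowness just established. The starting point is the Chong--Nies--Yu characterization \cite{CNY07}, recalled in the introduction, that a real is low for $\Delta^1_1$-randomness if and only if it is $\Delta^1_1$-traceable. Thus it suffices to produce a real that is low for $\Delta^1_1$-Kurtz randomness but not $\Delta^1_1$-traceable.

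First I would apply Theorem \ref{theorem separating traceable and semi-traceable} to fix a real $x$ that is $\Delta^1_1$-dominated and $\Delta^1_1$-semi-traceable but not $\Delta^1_1$-traceable (any of the generic reals furnished by Lemma \ref{lemma basics for generic}, whose items (2) and (3) give exactly these three properties, will do). Then Theorem \ref{lemma: dom+semitrace implies low for delta kurtz} applies to $x$: being $\Delta^1_1$-dominated and $\Delta^1_1$-semi-traceable, $x$ is low for $\Delta^1_1$-Kurtz tests, and hence, as noted immediately before that theorem, low for $\Delta^1_1$-Kurtz randomness. On the other hand, since $x$ is not $\Delta^1_1$-traceable, the Chong--Nies--Yu characterization shows $x$ is not low for $\Delta^1_1$-randomness. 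This already separates the two notions.

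To see that the separation is in fact strict---that lowness for $\Delta^1_1$-randomness is the smaller class---I would supply the reverse containment: by Proposition \ref{proposition trace implies dom and semitrace} every $\Delta^1_1$-traceable real is $\Delta^1_1$-dominated and $\Delta^1_1$-semi-traceable, so by Theorem \ref{lemma: dom+semitrace implies low for delta kurtz} every real that is low for $\Delta^1_1$-randomness is low for $\Delta^1_1$-Kurtz randomness. Together with the witness $x$, this shows lowness for $\Delta^1_1$-Kurtz randomness strictly extends lowness for $\Delta^1_1$-randomness. I do not anticipate a genuine obstacle here, as the corollary is a bookkeeping consequence of the two theorems of this section and the cited characterization; the only point demanding care is to confirm that the forcing reals of Lemma \ref{lemma basics for generic} satisfy both hypotheses of Theorem \ref{lemma: dom+semitrace implies low for delta kurtz} simultaneously, which is precisely what that lemma asserts.
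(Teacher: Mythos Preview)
Your proof is correct and follows essentially the same route as the paper: fix a real $x$ from Theorem~\ref{theorem separating traceable and semi-traceable} that is $\Delta^1_1$-dominated and $\Delta^1_1$-semi-traceable but not $\Delta^1_1$-traceable, apply Theorem~\ref{lemma: dom+semitrace implies low for delta kurtz} to get lowness for $\Delta^1_1$-Kurtz randomness, and invoke the Chong--Nies--Yu characterization to rule out lowness for $\Delta^1_1$-randomness. Your additional paragraph on the reverse containment via Proposition~\ref{proposition trace implies dom and semitrace} is correct but goes beyond what the corollary states or what the paper proves here.
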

\begin{proof}
By Theorem \ref{theorem separating traceable and semi-traceable}, there is a real $x$ that is $\Delta^1_1$-dominated and $\Delta^1_1$-semi-traceable but not $\Delta^1_1$-traceable. By Theorem \ref{lemma: dom+semitrace implies low for delta kurtz}, $x$ is low for $\Delta^1_1$-Kurtz randomness.
Chong, Nies and Yu \cite{CNY07} proved that lowness for
$\Delta^1_1$-randomness is the same as $\Delta^1_1$-traceability.  Thus $x$ is not low for $\Delta^1_1$-randomness.
\end{proof}

\begin{corollary}\label{corollary: nonbase for delta11 kurtz random}
There is a non-zero hyperdegree below $\KO$ which is not a base for  a
cone of $\Delta^1_1$-Kurtz randoms.
\end{corollary}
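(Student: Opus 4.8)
The plan is to realize the required hyperdegree as that of the witness real from Theorem \ref{theorem separating traceable and semi-traceable}, and then to use lowness for $\Delta^1_1$-Kurtz tests to show that its \emph{entire} hyperdegree avoids the $\Delta^1_1$-Kurtz random reals. Let $x$ be a generic real as in Lemma \ref{lemma basics for generic}, so that $x$ is $\Delta^1_1$-dominated and $\Delta^1_1$-semi-traceable but not $\Delta^1_1$-traceable, and $\omega_1^x=\CK$. Since every hyperarithmetic real is trivially $\Delta^1_1$-traceable, the failure of traceability shows that $x$ is non-hyperarithmetic, so its hyperdegree $\mathbf{d}$ is nonzero. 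To place $\mathbf{d}$ below $\KO$, I would observe that the clumpy forcing relation restricted to $\Sigma^1_1$ formulas is $\Pi^1_1$ (Lemma \ref{lemma comlexity of clumpy forcing}) and that every condition is $\Delta^1_1$; hence a generic filter can be built with a $\KO$-oracle, deciding one $\Sigma^1_1$ sentence at a time by Lemma \ref{lemma fusion lemma}(2), so that $x\leq_h\KO$. Because $\omega_1^x=\CK$, Theorem \ref{theorem x h>o iff x computes longer} gives $x\not\geq_h\KO$; thus $\mathbf{d}$ lies strictly below the hyperdegree of $\KO$.

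Next I would apply Theorem \ref{lemma: dom+semitrace implies low for delta kurtz}: being $\Delta^1_1$-dominated and $\Delta^1_1$-semi-traceable, $x$ is low for $\Delta^1_1$-Kurtz tests. The crucial point is that this forces every real of hyperdegree $\mathbf{d}$ to fail $\Delta^1_1$-Kurtz randomness. Indeed, let $y\equiv_h x$, so that $y\leq_h x$ and the singleton $\{y\}$ is a $\Delta^1_1(x)$ closed set; its complement $2^{\omega}\setminus\{y\}$ is then a $\Delta^1_1(x)$ open set of measure $1$. By lowness for $\Delta^1_1$-Kurtz tests there is a $\Delta^1_1$ open set $U$ of measure $1$ with $U\subseteq 2^{\omega}\setminus\{y\}$, whence $C=2^{\omega}\setminus U$ is a $\Delta^1_1$ closed null set containing $y$. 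Therefore $y$ is not $\Delta^1_1$-Kurtz random.

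Consequently the hyperdegree $\mathbf{d}$ contains no $\Delta^1_1$-Kurtz random real. Taking $\mathbf{h}=\mathbf{d}$, which satisfies $\mathbf{h}\geq\mathbf{d}$, we produce a hyperdegree above $\mathbf{d}$ with no $\Delta^1_1$-Kurtz random member, so $\mathbf{d}$ is not a base for a cone of $\Delta^1_1$-Kurtz randoms, and we have already seen that $\mathbf{d}$ is nonzero and below $\KO$. I expect the only delicate step to be the claim $x\leq_h\KO$: it rests on carrying out the generic construction $\KO$-effectively, namely on a $\KO$-uniform enumeration of the $\Delta^1_1$ clumpy conditions together with the $\Pi^1_1$ decidability of the forcing relation, after which the descending sequence of conditions, and hence $x$, is hyperarithmetic in $\KO$. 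Everything else follows directly from the cited results.
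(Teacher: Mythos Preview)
Your proposal is correct and follows the same approach as the paper's (very terse) proof: take a non-hyperarithmetic $x<_h\KO$ that is $\Delta^1_1$-dominated and $\Delta^1_1$-semi-traceable, apply Theorem~\ref{lemma: dom+semitrace implies low for delta kurtz} to get lowness for $\Delta^1_1$-Kurtz tests, and then observe that for any $y\equiv_h x$ the singleton $\{y\}$ is a $\Delta^1_1(x)$ closed null set, hence is covered by a $\Delta^1_1$ closed null set, so the hyperdegree of $x$ itself witnesses failure of the cone condition. The paper simply asserts the existence of such an $x<_h\KO$, whereas you supply the details (generic for clumpy forcing, with the $\Pi^1_1$ forcing relation making the construction $\KO$-effective); this is a reasonable and standard justification of the step the paper leaves implicit.
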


\begin{proof}
Clearly there is a real $x<_h \KO$ which is  $\Delta^1_1$-dominated
and $\Delta^1_1$-semi-traceable. Then the hyperdegree of $x$ is not a base for
a cone of $\Delta^1_1$-Kurtz randoms.
\end{proof}

\noindent
Actually the converse of Theorem \ref{lemma: dom+semitrace implies low
for delta kurtz} is also true.

\begin{lemma}\label{lemma: low for kutrz implies dominated}
If $x$ is low for  $\Delta^1_1$-Kurtz randomness, then $x$ is
$\Delta^1_1$-dominated.
\end{lemma}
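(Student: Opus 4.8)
The plan is to prove the contrapositive: if $x$ is not $\Delta^1_1$-dominated, then $x$ is not low for $\Delta^1_1$-Kurtz randomness. Since $x$ fails to be $\Delta^1_1$-dominated, there is a function $f\leq_h x$ that escapes every hyperarithmetic function, i.e.\ for every $\Delta^1_1$ function $g$ there are infinitely many $n$ with $f(n)>g(n)$. Without loss of generality I would take $f$ to be increasing. The goal is to manufacture from this fast-growing $f$ a $\Delta^1_1(x)$ closed null set (equivalently, a $\Delta^1_1(x)$ open conull set) that admits no $\Delta^1_1$ closed null superset (no $\Delta^1_1$ open conull subset), thereby witnessing that $x$ is not low for $\Delta^1_1$-Kurtz tests. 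Since we are working in terms of randomness rather than tests, the cleanest target is to exhibit a single $\Delta^1_1(x)$-Kurtz random real that is not $\Delta^1_1$-Kurtz random, but the test-level statement is what the construction naturally produces.

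The key construction is to use $f$ to pick out a thin sequence of ``windows'' that a $\Delta^1_1(x)$-Kurtz test can navigate but no $\Delta^1_1$ test can. Concretely, I would build a $\Delta^1_1(x)$ open set $U$ of measure $1$ whose approach to measure $1$ is governed by the values of $f$: at stage $n$ one throws in a clopen piece of measure roughly $1-2^{-n}$ but located inside the coordinate block $[f(n),f(n+1))$, so that recovering a $\Delta^1_1$ conull subset would require a hyperarithmetic function that catches up with $f$ on cofinitely many $n$. The failure of $\Delta^1_1$-domination guarantees that no such hyperarithmetic function exists, so every candidate $\Delta^1_1$ open subset of $U$ must miss infinitely many of these windows and hence have measure strictly below $1$. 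This is the mirror image of the construction in Theorem \ref{lemma: dom+semitrace implies low for delta kurtz}, where $\Delta^1_1$-domination (together with semi-traceability) was exactly what allowed one to reassemble a $\Delta^1_1$ conull subset from the blocks; here its failure is used to block any such reassembly.

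The main obstacle is arranging the coding so that a hyperarithmetic conull subset of $U$ genuinely \emph{forces} the existence of a dominating hyperarithmetic function, rather than merely a function agreeing with $f$ infinitely often (which semi-traceability could still supply). I would therefore design the windows so that capturing even cofinitely much of the measure of $U$ requires the subset-presentation to name, for almost every $n$, a location beyond $f(n)$ — so that a hyperarithmetic presentation of a conull subset yields, by a $\Delta^1_1$ search for the first clopen piece of sufficient measure in each block, a hyperarithmetic function majorizing $f$ almost everywhere. Care is needed because ``low for $\Delta^1_1$-Kurtz randomness'' is a priori weaker than ``low for $\Delta^1_1$-Kurtz tests'': the hypothesis only gives that the specific $\Delta^1_1(x)$-Kurtz random reals coincide with the $\Delta^1_1$-Kurtz random ones, not a uniform passage between tests. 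To bridge this I would, instead of working purely at the level of tests, fix the complement $A=2^\omega\setminus U$, a $\Delta^1_1(x)$ closed null set, and apply Theorem \ref{theorem pi11 measure base thm} together with a basis/genericity argument (as in the proof of Theorem \ref{theorem seperating randomness}) to locate a real in no $\Delta^1_1$ closed null set that nonetheless lies in $A$, contradicting lowness for $\Delta^1_1$-Kurtz randomness directly.
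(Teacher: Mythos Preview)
Your outline matches the paper's: prove the contrapositive by taking a non-dominated increasing $f\le_h x$, building from it a $\Delta^1_1(x)$ closed null set $A$, and then producing a $\Delta^1_1$-Kurtz random real inside $A$. But two pieces are not pinned down.

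First, the construction of $A$ is left vague. The paper makes it concrete: take $S_f=\{z\in 2^\omega : z(f(n))=0\text{ for all }n\}$. This is $\Delta^1_1(x)$, closed, and null. If some $\Delta^1_1$ closed null $[T]\supseteq S_f$ existed, define $g(n)=\min\{m:|\{\sigma\in 2^m\cap T\}|/2^m<2^{-n}\}+1$; a direct count (there are $2^{f(n)-n}$ strings of length $f(n)$ in $S_f$) shows $g(n)>f(n)$ for \emph{every} $n$, contradicting non-domination. Your ``windows'' description gestures at this but does not isolate the counting argument that forces domination at every $n$ rather than merely infinitely often.

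Second, and this is the real gap, you correctly flag that low-for-randomness is a priori weaker than low-for-tests, but your proposed fix does not work: Theorem~\ref{theorem pi11 measure base thm} (Sacks--Tanaka) is a basis theorem for $\Pi^1_1$ sets of \emph{positive} measure, and $A$ is null, so it gives nothing here. The paper bridges the gap by observing that the non-covering property of $S_f$ is hereditary under clopen restriction: for any $\sigma$ with $[\sigma]\cap S_f\neq\emptyset$, the same counting argument shows $[\sigma]\cap S_f$ is not contained in any $\Delta^1_1$ closed null set either. With that in hand one runs a direct diagonalization: enumerate the $\Delta^1_1$ closed null sets $P_0,P_1,\ldots$, and at stage $n+1$ extend the current string inside $S_f$ to some $\tau$ with $[\tau]\cap S_f\neq\emptyset$ but $[\tau]\cap P_n=\emptyset$ (possible because $[\tau']\cap S_f\not\subseteq P_n$ for the current $\tau'$). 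The limit $z\in S_f$ is then $\Delta^1_1$-Kurtz random but not $\Delta^1_1(x)$-Kurtz random. This hereditary-under-clopen-restriction step is exactly the missing idea in your proposal.
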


\begin{proof}
Firstly we show that if $x$ is low for $\Delta^1_1$-Kurtz tests, then $x$
is  $\Delta^1_1$-dominated.

Suppose $f\leq_h x$ is an increasing function. Let $S_f=\{z\mid
\forall n(z(f(n))=0)\}$. Obviously $S_f$ is a $\Delta^1_1(x)$ closed
null set. So there is a  $\Delta^1_1$ closed null set $[T]\supseteq
S_f$ where $T\subseteq 2^{<\omega}$ is a $\Delta^1_1$ tree. Define
$$g(n)=\min\{m\mid \frac{|\{\sigma \in 2^m\mid \sigma \in
T\}|}{2^m}<2^{-n}\}+1.$$ Since $\mu([T])=0$, $g$ is a well defined
$\Delta^1_1$ function. We claim that $g$ dominates $f$.

For every $n$, $S_{f(n)}=\{\sigma \in 2^{f(n)}\mid \forall i\leq n(
\sigma(f(i))=0)\}$ has cardinality $2^{f(n)-n}$. But if $g(n)\leq
f(n)$, then since $S \subseteq [T]$, we have $$|S_{f(n)}|\leq
2^{f(n)-g(n)}\cdot |\{\sigma \in 2^{g(n)}\mid \sigma \in
T\}|<2^{f(n)-g(n)}\cdot 2^{g(n)-n}=2^{f(n)-n}.$$
This is a contradiction.
So $x$ is $\Delta^1_1$-dominated.

\bigskip
\noindent
Now suppose $x$ is not $\Delta^1_1$-dominated witnessed by some
$f\leq_h x$. Then $S_f$ is not contained in any $\Delta^1_1$ closed
null set. Actually, it is not difficult to see that for any $\sigma$
with $[\sigma]\cap S_f \neq \emptyset$, $[\sigma]\cap S_f$ is not
contained in any $\Delta^1_1$ closed null set (otherwise, as proved
above,  one can show that $f$ is dominated by some $\Delta^1_1$
function). Then, by an induction, we can construct a
$\Delta^1_1$-Kurtz random real $z\in S_f$ as follows:

Fix an enumeration $P_0, P_1,\ldots$ of the $\Delta^1_1$ closed null sets.

At stage $n+1$, we have constructed some $z \uh l_n$ so that $[z]\uh
l_n \cap S_f \neq \emptyset$. Then there is a $\tau \succ z \uh l_n$
so that $[\tau]\cap S_f \neq \emptyset$ but $[\tau]\cap S_f \cap P_n
=\emptyset$. Fix such a $\tau$, let $l_{n+1}=|\tau|$ and $z\uh l_{n+1}=\tau$.

Then $z\in S_f$ is $\Delta^1_1$-Kurtz random.

So $x$ is not low for $\Delta^1_1$-Kurtz randomness.
\end{proof}

\begin{lemma}\label{lemma: low for kurtz implies semitrace}
If $x$ is low for $\Delta^1_1$-Kurtz randomness, then $x$ is
$\Delta^1_1$-semi-traceable.
\end{lemma}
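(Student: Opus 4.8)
The plan is to argue the contrapositive while retaining the hypothesis, so that $\Delta^1_1$-domination is available. Assume $x$ is low for $\Delta^1_1$-Kurtz randomness; by Lemma~\ref{lemma: low for kutrz implies dominated}, $x$ is $\Delta^1_1$-dominated. Suppose toward a contradiction that $x$ is not $\Delta^1_1$-semi-traceable. Since $x$ is $\Delta^1_1$-dominated, Proposition~\ref{proposition: Charaterization of semi+dominated} applies and its condition~(2) must fail: there is a function $g\leq_h x$ such that for \emph{no} increasing $\Delta^1_1$ function and $\Delta^1_1$ map $F$ with $|F(m)|\le m$ does one catch $g$ once in every interval. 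I will use this $g$ to produce a $\Delta^1_1$-Kurtz random real lying inside a $\Delta^1_1(x)$ closed null set; such a real is $\Delta^1_1$-Kurtz random but not $\Delta^1_1(x)$-Kurtz random, so the two randomness classes differ and $x$ is not low, the desired contradiction.

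Fix a $\Delta^1_1$ increasing $d$ with $d(n)>g(n)$ (domination), partition $\omega$ into consecutive blocks $I_n$ with $|I_n|=d(n)$ and left endpoint $b_n$, and set $A=\{z : \forall n\, z(b_n+g(n))=0\}$. Then $A$ is $\Delta^1_1(x)$, closed, and null (one bit is pinned per block, over infinitely many blocks), and it is perfect because every other coordinate in each block is free. Exactly as in the second half of the proof of Lemma~\ref{lemma: low for kutrz implies dominated}, if $A$ is \emph{locally non-small} --- meaning $[\sigma]\cap A\not\subseteq P$ for every string $\sigma$ with $[\sigma]\cap A\neq\emptyset$ and every $\Delta^1_1$ closed null set $P$ --- then, enumerating the $\Delta^1_1$ closed null sets and diagonalizing within $A$ at each stage, one builds a branch $z\in A$ that avoids them all, i.e.\ a $\Delta^1_1$-Kurtz random $z\in A$. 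So it remains to establish local non-smallness.

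This is the heart of the argument. Suppose instead $[\sigma]\cap A\subseteq P=[T_P]$ with $P$ a $\Delta^1_1$ closed null set; take $\sigma$ ending at a block boundary. Call block $m$ \emph{active} if some $T_P$-extendible $\eta\succeq\sigma$ of length $b_m$ has a block-$m$ extension that is not $T_P$-extendible; this is a $\Delta^1_1$ predicate of $T_P$ and $\sigma$. If only finitely many blocks were active then above some level every $T_P$-extendible node would keep all of its extensions, forcing a full cylinder into $P$ and contradicting $\mu(P)=0$; hence there are infinitely many active blocks. For each $m$ put $F(m)=\bigcup\{\,\mathrm{supp}(\beta): \exists\,\eta\succeq\sigma,\ |\eta|=b_m,\ \eta\in T_P^{\mathrm{ext}},\ \eta\beta\notin T_P^{\mathrm{ext}}\,\}$, a $\Delta^1_1$ set (read off $T_P$ and $\sigma$ only) with $|F(m)|\le d(m)$. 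The point is that $g(m)\in F(m)$ on every active block: since $A\subseteq P$, every block-$m$ string with a $0$ in coordinate $g(m)$ is kept by $P$ above any $A$-reachable $\eta$, so any excluded $\beta$ must carry a $1$ in coordinate $g(m)$, i.e.\ $g(m)\in\mathrm{supp}(\beta)\subseteq F(m)$. Letting the increasing $\Delta^1_1$ function enumerate the active blocks turns this into: in every interval determined by consecutive active blocks there is an $m$ with $g(m)\in F(m)$. This is precisely condition~(2) of Proposition~\ref{proposition: Charaterization of semi+dominated} for $g$ (after normalizing the bound $d(m)$ to $m$ by the bound-insensitivity recorded in Proposition~\ref{tracebound}), contradicting the choice of $g$.

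The main obstacle is exactly this extraction step, and two features make it work. First, a single closed superset only yields \emph{one-sided} information, so it could never pin the value $g(m)$ on the nose; the device that rescues the argument is that Proposition~\ref{proposition: Charaterization of semi+dominated} (available because $x$ is $\Delta^1_1$-dominated) reduces semi-traceability to the membership condition $g(m)\in F(m)$ for a small $\Delta^1_1$ \emph{set} $F(m)$, which is precisely the one-sided data a cover provides. Second, $F$ and the interval function must be genuinely $\Delta^1_1$ even though the guarantee $g(m)\in F(m)$ is proved from $A\subseteq P$ (which involves $x$): this is legitimate because $F$ and the intervals are defined purely from the $\Delta^1_1$ objects $T_P$ and $\sigma$, while $x$ enters only in the verification. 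The remaining routine points are the normalization of the trace bound and checking that the stagewise diagonalization producing $z$ stays inside $A$ while defeating each $\Delta^1_1$ closed null set.
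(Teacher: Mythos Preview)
The extraction step has a genuine gap: the bound $|F(m)|\le d(m)$ is vacuous and cannot be normalized to $|F(m)|\le m$. By construction $F(m)\subseteq\{0,\ldots,d(m)-1\}$, so $|F(m)|\le d(m)$ holds automatically and carries no content; nothing in your argument forces $F(m)$ to be smaller than the whole block (for instance, if $P$ coincides with $[\sigma]\cap A$ then above any $A$-reachable $\eta$ every $\beta$ with $\beta(g(m))=1$ is dropped, and the supports of such $\beta$ cover all positions, so $F(m)=\{0,\ldots,d(m)-1\}$). Thus the conclusion you reach --- there exist $\Delta^1_1$ functions $f,F$ with $|F(m)|\le d(m)$ and $g(m)\in F(m)$ in each $f$-interval --- says nothing beyond $g(m)<d(m)$, which is just the domination hypothesis you started from; it does not contradict failure of condition~(2) in Proposition~\ref{proposition: Charaterization of semi+dominated}. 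Proposition~\ref{tracebound} cannot rescue the bound either: that result changes trace bounds by applying the tracing hypothesis to a \emph{regrouped} function, which exploits a universal quantifier over all functions $\le_h x$. Here you are arguing about one fixed $g$ with no such universal hypothesis available, and for a single function a ``trace'' of size $d(m)$ with $d$ dominating that function is no trace at all.

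This obstacle is precisely what the Greenberg--Miller machinery (which the paper follows) is built to overcome. Their coding $P^f=\{z:\forall m\ z(f\upharpoonright n_m)=0\}$ together with the intervals $D_{m,k}$ of size $2^{m-k-1}$ is arranged so that a density bound on a closed null superset $Q$ at a level $d_k$ translates, via a finite tree $S$ with $[S]=\{h:P^h\subseteq[Q_k]\}$, into the genuine bound $|S_m|\le 2^{m-k-1}$ on candidate initial segments of $f$; matching $|S_m|$ against $|D_{m,k}|$ then yields the semi-trace. Your set $A$ is essentially the $S_f$ of Lemma~\ref{lemma: low for kutrz implies dominated}; that coding is calibrated to extract a dominating function, but one pinned bit per block of width $d(m)$ cannot be localized to a set of size $m$ from a closed superset alone.
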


\begin{proof}
The proof is analogous to that of the main result in \cite{GrMi}.

Firstly we show that if $x$ is low for $\Delta^1_1$-Kurtz tests, then $x$
is  $\Delta^1_1$-semi-traceable.

Suppose that $x$ is low for  $\Delta^1_1$-Kurtz tests and $f\leq_h x$.
Partition $\omega$ into finite intervals $D_{m,k}$ for $0<k<m$ so that
$|D_{m,k}|=2^{m-k-1}$. Moreover, if $m<m'$, then $\max D_{m,k}<\min
D_{m',k'}$ for any $k<m$ and $k'<m'$. Let $n_m=\max \{i\mid i\in
D_{m,k}\wedge k<m\}$ for every $m\in \omega$.  Note that
$\{n_m\}_{m\in \omega}$ is a recursive increasing sequence.

For every function $h$, let $$P^h=\{x\in 2^{\omega}\mid  \forall m(
x(h \uh n_m)=0)\}$$ be a closed null set. Obviously $P^f$ is a
$\Delta^1_1(x)$ closed null set. Then there is a $\Delta^1_1$ closed
null set $Q \supseteq P^f$. We define a $\Delta^1_1$ function $g$ as follows.

\bigskip

\noindent
For each $k\in \omega$, let $d_k$ be the least number $d$ so that
$$|\{\sigma\in 2^d\mid \exists x \in Q (x\succ \sigma)\}|\leq 2^{d-k-1}.$$
Note that $\{d_k\}_{k \in \omega}$ is a
$\Delta^1_1$ sequence. Define $$Q_k=\{\sigma\mid \sigma \in 2^{d_k}
\wedge  \exists x \in Q (x\succ \sigma)\}.$$
Then $\{Q_k\}_{k\in \omega}$ is a $\Delta^1_1$ sequence of clopen sets
and $|Q_k|\leq 2^{d_k-k-1}$ for each $k<d_k$. Then Greenberg and
Miller \cite{GrMi} constructed a finite tree $S\subseteq
\omega^{<\omega}$ and a finite sequence $\{S_m\}_{k<m\leq l}$ for some
$l$ with the following properties:
\begin{enumerate}
\item $[S]=\{h\in \omega^{\omega}\mid P^h\subseteq [Q_k]\}$;
\item $S_m\subseteq S \cap \omega^{n_m}$;
\item $|S_m|\leq 2^{m-k-1}$;
\item every leaf of $S$ extends some string in $\bigcup_{k<m\leq l}S_m$.
\end{enumerate}
Moreover, both the finite tree $S$ and sequence $\{S_m\}_{k<m\leq l}$ can be obtained
uniformly from $Q_k$.

Now for each $m$ with $k<m\leq l$ and $\sigma \in S_m$,  we pick a
distinct $i \in D_{m,k}$ and define $g(i)=\sigma(i)$. For the other
undefined $i\in D_{m,k}$, let $g(i)=0$.

\bigskip

\noindent
So $g$ is a well-defined $\Delta^1_1$ function.

For each $k$, $P^f\subseteq Q\subseteq [Q_{k}]$. So $f \in [S]$. Hence
there must be some $i>n_k$ so that $f(i)=g(i)$.

Thus $x$ is $\Delta^1_1$-semi-traceable.

\bigskip

\noindent
Now suppose $x$ is not  $\Delta^1_1$-semi-traceable as witnessed by $f\leq_h x$. Then $P^f$ is not contained in any
$\Delta^1_1$ closed null set. It is shown in \cite{GrMi} that for any
$\sigma$, assuming that $[\sigma]\cap P^f\neq \emptyset$,
$[\sigma]\cap P^f$ is not contained in any $\Delta^1_1$ closed null
set. Then by an easy induction, one can construct a $\Delta^1_1$-Kurtz
random real in $P^f$.

So $x$ is not low for $\Delta^1_1$-Kurtz randomness.
\end{proof}

\noindent
So we have the following theorem.
\begin{theorem}
For any real $x\in 2^{\omega}$, the following are equivalent:
\begin{itemize}
\item[(1)] $x$  is low for $\Delta^1_1$-Kurtz tests;
\item[(2)] $x$ is low for $\Delta^1_1$-Kurtz randomness;
\item[(3)] $x$ is $\Delta^1_1$-dominated and $\Delta^1_1$-semi-traceable.
\end{itemize}
\end{theorem}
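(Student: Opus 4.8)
The plan is to establish the cycle of implications $(1)\Rightarrow(2)\Rightarrow(3)\Rightarrow(1)$, drawing on the preceding lemmas so that very little new work is required. The implication $(1)\Rightarrow(2)$ is immediate from the remark made just before Theorem~\ref{lemma: dom+semitrace implies low for delta kurtz}: if every $\Delta^1_1(x)$ open set of measure~$1$ has a $\Delta^1_1$ open subset of measure~$1$, then any $\Delta^1_1(x)$ closed null set is contained in a $\Delta^1_1$ closed null set (pass to complements), so $x$ cannot help avoid any real that was already $\Delta^1_1$-Kurtz random. Hence $\Delta^1_1$-Kurtz randomness and $\Delta^1_1(x)$-Kurtz randomness coincide, which is precisely lowness for $\Delta^1_1$-Kurtz randomness.

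For $(2)\Rightarrow(3)$ I would simply cite Lemma~\ref{lemma: low for kutrz implies dominated} and Lemma~\ref{lemma: low for kurtz implies semitrace}: the former shows that lowness for $\Delta^1_1$-Kurtz randomness forces $x$ to be $\Delta^1_1$-dominated, and the latter shows it forces $x$ to be $\Delta^1_1$-semi-traceable. Conjoining these two conclusions gives exactly statement~(3). Finally, $(3)\Rightarrow(1)$ is the content of Theorem~\ref{lemma: dom+semitrace implies low for delta kurtz}, which directly asserts that a $\Delta^1_1$-dominated and $\Delta^1_1$-semi-traceable real is low for $\Delta^1_1$-Kurtz tests. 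Thus all three statements are equivalent.

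I do not expect any serious obstacle here, since the theorem is essentially a packaging of results already proved: the real mathematical effort lives in Theorem~\ref{lemma: dom+semitrace implies low for delta kurtz} and in Lemmas~\ref{lemma: low for kutrz implies dominated} and~\ref{lemma: low for kurtz implies semitrace}. The one point deserving a line of care is the step $(1)\Rightarrow(2)$, where one must confirm that lowness for \emph{tests} (a statement about conull open sets) implies lowness for \emph{randomness} (a statement about the class of random reals); this is the duality between closed null sets and open conull sets, observed in the paragraph introducing the notion of lowness for $\Delta^1_1$-Kurtz tests. Apart from verifying that this implication runs in the needed direction, the proof is a short assembly of citations.

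\begin{proof}
We show $(1)\Rightarrow(2)\Rightarrow(3)\Rightarrow(1)$.

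$(1)\Rightarrow(2)$: This is the observation recorded before Theorem~\ref{lemma: dom+semitrace implies low for delta kurtz}. If $x$ is low for $\Delta^1_1$-Kurtz tests, then every $\Delta^1_1(x)$ open set of measure~$1$ has a $\Delta^1_1$ open subset of measure~$1$. Taking complements, every $\Delta^1_1(x)$ closed null set is contained in a $\Delta^1_1$ closed null set. Hence a real avoids all $\Delta^1_1$ closed null sets if and only if it avoids all $\Delta^1_1(x)$ closed null sets; that is, $\Delta^1_1$-Kurtz randomness coincides with $\Delta^1_1(x)$-Kurtz randomness, so $x$ is low for $\Delta^1_1$-Kurtz randomness.

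$(2)\Rightarrow(3)$: By Lemma~\ref{lemma: low for kutrz implies dominated}, lowness for $\Delta^1_1$-Kurtz randomness implies that $x$ is $\Delta^1_1$-dominated. By Lemma~\ref{lemma: low for kurtz implies semitrace}, it also implies that $x$ is $\Delta^1_1$-semi-traceable.

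$(3)\Rightarrow(1)$: This is exactly Theorem~\ref{lemma: dom+semitrace implies low for delta kurtz}.
\end{proof}
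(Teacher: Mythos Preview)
Your proposal is correct and matches the paper's approach exactly: the paper does not even write out a separate proof for this theorem, merely stating ``So we have the following theorem'' after Theorem~\ref{lemma: dom+semitrace implies low for delta kurtz} and Lemmas~\ref{lemma: low for kutrz implies dominated} and~\ref{lemma: low for kurtz implies semitrace}, which is precisely the cycle $(1)\Rightarrow(2)\Rightarrow(3)\Rightarrow(1)$ you spell out.
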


\noindent
It is unknown whether there exists a nonhyperarithmetic real which is
low for $\Pi^1_1$-Kurtz randomness.
However, we can prove the following containment.

\begin{proposition}\label{proposition low pi11 implies low delta11}
If $x$ is low for $\Pi^1_1$-Kurtz randomness, then $x$ is low for
$\Delta^1_1$-Kurtz randomness.
\end{proposition}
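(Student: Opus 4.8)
The plan is to use the definition of lowness directly: $x$ is low for $\DI$-Kurtz randomness precisely when every $\DI$-Kurtz random real is $\DI(x)$-Kurtz random, i.e.\ when every $\DI(x)$ closed null set $A$ is disjoint from the class of $\DI$-Kurtz random reals. So it suffices to rule out, for each such $A$, a $\DI$-Kurtz random real lying in $A$. The two facts I would lean on are: first, that the class $\{z:z\text{ is }\DI\text{-Kurtz random}\}$ is $\TI$ --- this comes from a uniform $\DI$-enumeration of the $\DI$ closed null tests built out of Sacks' ramified machinery together with Theorems \ref{theorem pi11 measure is pi} and \ref{theorem approximating delta11 set} (``$A_n$ is null'' is $\TI$ in the ranked index, membership is uniformly $\DI$, so avoiding all tests is $\TI$); and second, Proposition \ref{proposition: delta11 vs pi11 at ck}, which identifies $\QI$- and $\DI$-Kurtz randomness on the reals $z$ with $\omega_1^z=\CK$.

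Granting for the moment that $\omega_1^x=\CK$, the core argument runs cleanly. Suppose some $\DI(x)$ closed null set $A$ contains a $\DI$-Kurtz random real. Then $B:=A\cap\{z:z\text{ is }\DI\text{-Kurtz random}\}$ is a nonempty $\TI(x)$ set, so the Gandy basis theorem relativized to $x$ (Theorem \ref{theorem lowbase}) yields $z\in B$ with $\KO^{z\oplus x}\leq_h\KO^x$, hence $\omega_1^{z\oplus x}=\omega_1^x=\CK$ and therefore $\omega_1^z=\CK$. By Proposition \ref{proposition: delta11 vs pi11 at ck} this $z$ is then $\QI$-Kurtz random, so lowness for $\QI$-Kurtz randomness makes $z$ also $\QI(x)$-Kurtz random, and since $\DI(x)\subseteq\QI(x)$ it is $\DI(x)$-Kurtz random, contradicting $z\in A$. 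Note the basis step is exactly what lets us replace any (possibly high) $\DI$-Kurtz random member of $A$ by one of least possible $\omega_1$.

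The hard part, and the only place the hypothesis does real work, is showing that $x$ low for $\QI$-Kurtz randomness forces $\omega_1^x=\CK$; this is precisely what makes the Gandy step output $\omega_1^z=\CK$. I would argue the contrapositive: if $\omega_1^x>\CK$ then $x\geq_h\KO$ by Theorem \ref{theorem x h>o iff x computes longer}, so every $\DI(\KO)$ closed null set is a $\QI(x)$ closed null set, and lowness would give
$$\{z:z\text{ is }\QI\text{-Kurtz random}\}=\{z:z\text{ is }\QI(x)\text{-Kurtz random}\}\subseteq\{z:z\text{ is }\DI(\KO)\text{-Kurtz random}\}.$$
Thus it would suffice to exhibit one $\QI$-Kurtz random real trapped in some $\DI(\KO)$ closed null set, i.e.\ to witness the proper failure of $\QI\text{-Kurtz random}\subseteq\DI(\KO)\text{-Kurtz random}$; morally this should follow by relativizing Theorem \ref{theorem seperating randomness} to $\KO$, since $\KO$-level Kurtz tests strictly dominate the $\CK$-level ones. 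The genuine obstacle is the $\omega_1$-bookkeeping: Proposition \ref{proposition: delta11 vs pi11 at ck} only certifies the witness as $\QI$-Kurtz random if $\omega_1^z=\CK$, yet any basis theorem applied to a $\DI(\KO)$ test bounds $\omega_1^z$ merely by $\omega_1^{\KO}$. Producing a $\DI$-Kurtz random real of $\omega_1$ exactly $\CK$ that nonetheless lies inside a prescribed $\DI(\KO)$ closed null set is where I expect the main difficulty to concentrate.
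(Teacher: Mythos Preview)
Your core argument (the middle paragraph, granting $\omega_1^x=\CK$) is correct and is exactly the paper's proof: both intersect $A$ with the $\Sigma^1_1$ class of $\Delta^1_1$-Kurtz randoms (the paper writes this as $A\setminus B$ where $B=\bigcup\{C:C\text{ is a }\Delta^1_1\text{ closed null set}\}$ is $\Pi^1_1$), apply the Gandy basis relativized to $x$, use $\omega_1^x=\CK$ to conclude $\omega_1^z=\CK$, upgrade $z$ to $\Pi^1_1$-Kurtz random via Proposition~\ref{proposition: delta11 vs pi11 at ck}, and contradict lowness using $z\in A$. The paper in fact just writes ``$\omega_1^z=\omega_1^x=\CK$'' without further comment, so you have correctly isolated a step the paper leaves implicit.

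Your attempt to justify $\omega_1^x=\CK$, however, is overcomplicated, and the perceived obstacle is not real. You do not need to start from a prescribed $\Delta^1_1(\KO)$ closed null test and then hunt for a $\Pi^1_1$-Kurtz random inside it; pick the real first and let the test be its singleton. Apply the \emph{unrelativized} Gandy basis theorem to the nonempty $\Sigma^1_1$ set of $\Delta^1_1$-Kurtz randoms (whose $\Sigma^1_1$-ness you already argued) to get some $z_0$ with $\KO^{z_0}\le_h\KO$; then $\omega_1^{z_0}=\CK$ and $z_0\le_h\KO$. By Proposition~\ref{proposition: delta11 vs pi11 at ck} this $z_0$ is $\Pi^1_1$-Kurtz random, while $\{z_0\}$ is a $\Pi^0_1(z_0)\subseteq\Delta^1_1(\KO)\subseteq\Pi^1_1(x)$ closed null set (the last inclusion because $\omega_1^x>\CK$ gives $x\ge_h\KO$). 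This already contradicts lowness for $\Pi^1_1$-Kurtz randomness, so $\omega_1^x=\CK$. Note that this is nothing more than your own main argument run once with the unrelativized basis theorem and with $A=\{z_0\}$, so no new idea is required.
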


\begin{proof}
Assume that $x$ is low for $\Pi^1_1$-Kurtz randomness, $y$ is
$\Delta^1_1$-Kurtz random and there is a $\Delta^1_1(x)$ closed null
set $A$ with $y \in A$.  By Theorem \ref{theorem pi11 measure is
pi},  the set $$B=\bigcup\{C\mid C \mbox{ is a }\Delta^1_1 \mbox{
closed null set}\}$$ is a $\Pi^1_1$ null set. So $A-B$ is a
$\Sigma^1_1(x)$set. Since $y$ is $\Delta^1_1$-Kurtz random,
$y\not\in B$. Hence $y\in A-B$ and so $A-B$ is a $\Sigma^1_1(x)$
nonempty set. Thus there must be some real $z\in A-B$ with
$\omega_1^z=\omega_1^x=\CK$. Since $z \not\in B$, $z$ is
$\Delta^1_1$-Kurtz random.  So by Proposition \ref{proposition:
delta11 vs pi11 at ck}, $z$ is $\Pi^1_1$-Kurtz random. This
contradicts the fact that $x$ is low for $\Pi^1_1$-Kurtz randomness.
\end{proof}

\bibliographystyle{plain}

\end{document}